\pdfoutput=1
\documentclass[final,leqno,onefignum,onetabnum,onealgnum,oneeqnum,onethmnum]{siamart171218}
\usepackage{lipsum}
\usepackage{amsfonts}
\usepackage{amsmath}
\usepackage{graphicx}
\usepackage{epstopdf}
\usepackage[caption=false]{subfig}
\graphicspath{{imgs/}}
\Crefname{Algorithm}{Algorithm}{Algorithms}
\crefformat{figure}{#2{}\scshape{Fig.} #1{}#3}
\crefformat{lemma}{#2{}\scshape{Lemma} #1{}#3}
\crefformat{theorem}{#2{}\scshape{Theorem} #1{}#3}
\crefformat{proposition}{#2{}\scshape{Proposition} #1{}#3}
\crefformat{definition}{#2{}\scshape{Definition} #1{}#3}
\newtheorem{remark}{Remark}
\crefformat{remark}{#2{}\scshape{Remark} #1{}#3}
\usepackage{algpseudocode}
\epstopdfDeclareGraphicsRule{.tif}{png}{.png}{convert #1 \OutputFile}
\AppendGraphicsExtensions{.tif}
\usepackage{transparent}
\usepackage{tikz}
\usetikzlibrary{positioning,calc}
\usepackage{xcolor}
\definecolor{selfDefinedColor}{rgb}{0.8,0.1,0.4}
\usepackage{mwe}
\tikzset{blankboximg/.style={remember picture,white,ultra thick,draw,inner sep=0pt,outer sep=0pt}}
\tikzset{redboximg/.style={remember picture,red,ultra thick,draw,inner sep=0pt,outer sep=0pt}}
\tikzset{blueboximg/.style={remember picture,blue,ultra thick,draw,inner sep=0pt,outer sep=0pt}}
\tikzset{selfDefinedColorboximg/.style={remember picture,selfDefinedColor,ultra thick,draw,inner sep=0pt,outer sep=0pt,dashed}}
\tikzset{greenboximg/.style={remember picture,green,ultra thick,draw,inner sep=0pt,outer sep=0pt,dashed}}
\usepackage{etoolbox}
\apptocmd{\sloppy}{\hbadness 10000\relax}{}{}
\raggedbottom

\title{Iterative regularization algorithms for image denoising with the TV-Stokes model}
\author{
    Bin Wu\thanks{Department of Computer Science, Electrical Engineering and Mathematical Sciences, Western Norway University of Applied Sciences, Inndalsveien 28, 5063 Bergen, Norway (\email{bin.wu@hvl.no}, \email{talal.rahman@hvl.no}).}
    \and
    Leszek Marcinkowski\thanks{Faculty of Mathematics, University of Warsaw, Banacha 2, 02-097 Warszawa, Poland (\url{leszek.marcinkowski@mimuw.edu.pl}).}
    \and
    Xue-Cheng Tai\thanks{Department of Mathematics, University of Bergen, All\'{e}gaten 41, 5007 Bergen, Norway (\email{xue-cheng.tai@uib.no}).}
    \and
    Talal Rahman\footnotemark[1]
}
\headers{Iterative regularization with TV-Stokes}{B. Wu, L. Marcinkowski, X. C. Tai, and T. Rahman}

\begin{document}
\maketitle

\begin{abstract}
    We propose a set of iterative regularization algorithms for the TV-Stokes model to restore images from noisy images with Gaussian noise. These are some extensions of the iterative regularization algorithm proposed for the classical Rudin-Osher-Fatemi (ROF) model for image reconstruction, a single step model involving a scalar field smoothing, to the TV-Stokes model for image reconstruction, a two steps model involving a vector field smoothing in the first and a scalar field smoothing in the second. The iterative regularization algorithms proposed here are Richardson's iteration like. We have experimental results that show improvement over the original method in the quality of the restored image. Convergence analysis and numerical experiments are presented. 
\end{abstract}

\begin{keywords}
  Image processing, Total variation minimization
\end{keywords}

\begin{AMS}
  68Q25, 68R10, 68U05
\end{AMS}

\section{Introduction}
\label{intro}

Recovering an image from a noisy and blurry image is an inverse problem which is possible to be solved via variational methods, using total variation regularization, e.g., cf. \cite{Rudin1992,Chan1999,Chan2005,Aubert2006,Rahman2007,Zhu2008,Tai2009,Scherzer2009,Elo2009,Elo2009a,Yang2009a,Wu2010,Wu2011,He2010,Bredies2010,Litvinov2011,Chan2011a,Chen2012,Wu2012,Bayram2012,Hahn2012,Marcinkowski2016}. In this paper, we only focus on the denoising problem in image processing. Considering a noisy image $f:\Omega \mapsto \mathbb{R}$, where $\Omega$ is a bounded open subset of $\mathbb{R}^2$, the problem is to find a decomposition such that $f = u + v$, where $u$ is the signal, and $v$ is the noise. Let us consider this problem as an optimization problem. The simplest model can be the least square fitting, in other words, to find the minimizer in the squared $L^2$ space:
\begin{equation*}
    u = \mbox{arg} \min_{u} \Vert u - f \Vert_{(\Omega)}^2.
\end{equation*}
The notation $\Vert \cdot \Vert$, in this paper, refers to a $L^2$ norm if there is no specific subscript. This model, however, only works when we know the structure of $u$ otherwise there is only a trivial solution $u = f$. It is obvious that, without sufficient priori, to find a decomposition is an ill-posed inverse problem, cf. e.g., \cite{Chan2005,Aubert2006}. A regularizer is thus necessary. The Tikhonov regularizer is the first one used in this problem in history. In general, we define regularizer as 
\begin{equation*}
    J_p(u) := \int_{\Omega} \vert \nabla u \vert^p.
\end{equation*}
The Tikhonov regularizer is the case where $p = 2$. For the models with this regularizer, it is difficult to preserve edges while smoothing noise. Rudin, Osher, and Fatemi proposed a model (ROF) with a regularizer where $p = 1$. To be simplified in presentation, in this paper, we denote $J(\cdot)$ equipped with $p=1$ as default. The regularizer is thus the $BV$ seminorm where $BV(\Omega)$ means the space of functions with the bounded variation on $\Omega$. The ROF model successfully enhances the capability in edge preserving. However, it suffers a staircase effect which makes the restored image patternized. There are many models to overcome this problem, for instance, the high order regularization, LOT model, and TV-Stokes. The TV-Stokes model is defined as follows:
\begin{equation}\label{eq:tvs1}
    \boldsymbol{\tau} = 
    \mbox{arg} 
    \min_{\substack{\boldsymbol{\tau} \in BV(\Omega) \\ \nabla \cdot \boldsymbol{\tau} = 0}} 
    \bigg\{
        J(\boldsymbol{\tau}) + 
        H(\boldsymbol{\tau},\boldsymbol{\tau}^0)
    \bigg\},
\end{equation}
and
\begin{equation}\label{eq:tvs2}
    u = 
    \mbox{arg} 
    \min_{u \in BV(\Omega)} 
    \bigg\{ 
        J(u) -  
        \langle \nabla u,\frac{\boldsymbol{\tau}^{\bot}}{\vert\boldsymbol{\tau}^{\bot}\vert} \rangle +
        H(u,f)
   \bigg\},
\end{equation}
where $\boldsymbol{\tau},\boldsymbol{\tau}^0 \in \mathbb{R}^2$ are vectors, $\nabla \boldsymbol{\tau}$ inside $J(\boldsymbol{\tau})$ is a $2\times2$ matrix, i.e., the gradient of the vector $\boldsymbol{\tau}$; $H(u,f) := \frac{\eta}{2} \Vert u - f \Vert^2$ stands the quadratic fidelity with a scale parameter $\eta$; $\langle \cdot, \cdot \rangle$ denotes the inner product. In this model, a smoothed tangent field $\boldsymbol{\tau}$ is firstly obtained by solving the minimization problem (\ref{eq:tvs1}) under a divergence-free constraint, and subsequently the restored $u$ is obtained by a kind of vector matching under a limited deviation which is formulated as (\ref{eq:tvs2}). 

The ROF model is also considered defective in some cases for signal and noise decomposition, cf. e.g., \cite{Osher2005, Meyer2001}. There are many ways to handle this problem, for instance, Meyer's model, cf. \cite{Meyer2001}, the Vese and Osher's approximated Meyer's model, cf. \cite{Vese2003}, the Osher, Sol\'e, and Vese's model, cf. \cite{Osher2003}. There is another stream of methods which handle the problem through the iterative way, e.g., \cite{Osher2005} applies an iterative algorithm on the ROF model. To heuristically introduce this algorithm, we start with a typical ROF model as follows,
\begin{equation}\label{eq:rofitr1}
    u^1 = 
    \mbox{arg} 
    \min_{u \in BV(\Omega)} 	
    \bigg\{
        J(u) + 
        H(u,f)
    \bigg\}.
\end{equation}
We then calculate unit normal vector $\frac{\mathbf{n}^1}{\vert \mathbf{n}^1 \vert} =
\frac{\nabla u^1}{\vert \nabla u^1 \vert}$ and perform a vector matching step
\begin{equation}\label{eq:rofitr2}
    u^2 = 
    \mbox{arg} 
    \min_{u \in BV(\Omega)} 
    \bigg\{ 
        J(u) -  
        \langle \nabla u, \frac{\mathbf{n}^1}{\vert \mathbf{n}^1 \vert} \rangle +
        H(u,f)
    \bigg\},
\end{equation}
The optimal condition for (\ref{eq:rofitr1}), namely the Euler-Lagrange equation, is
\begin{equation}\label{eq:rofitr1-opt}
    -\nabla \cdot \frac{\nabla u^1}{\vert \nabla u^1 \vert} + \eta (u^1 - f) = 0.
\end{equation}
Slightly reforming (\ref{eq:rofitr2}) with adjoint, cf. \cite{Osher2005}, and substituting the relation (\ref{eq:rofitr1-opt}) into the reformed \cref{eq:rofitr2}, we get 
\begin{eqnarray*}
    u^2 &=&
    \mbox{arg} 
    \min_{u \in BV(\Omega)} 
    \bigg\{ 
        J(u) -  
        \langle u, \eta (f - u^1) \rangle +
        H(u,f)
    \bigg\}. \nonumber    
\end{eqnarray*}
By completing the square with some added constants, the above minimization problem is equivalent to the following
\begin{eqnarray}\label{eq:rofitr2_sub}
    u^2 &=& 
    \mbox{arg} 
    \min_{u \in BV(\Omega)} 
    \bigg\{ 
        J(u) +
        H(u,f + f - u^1)
    \bigg\}.    
\end{eqnarray}
It implies that the matching step exactly equivalents to a ROF model. The initial image $f$ is accordingly replaced by $f$ added with a `noise' $(f-u^1)$ obtained from the previous step. By an induction, \Cref{alg:osher} has been proposed by Osher and his coworkers, cf. \cite{Osher2005} for more details,.
\begin{algorithm}
    \caption{The iterative regularization proposed by Osher et al.}
    \label{alg:osher}
    \begin{algorithmic}[1]
        \State{Initialize $k = 0$, $v^{0} = 0$;}
        \Repeat
            \State{$u = \mbox{arg} \min_{u \in BV(\Omega)} \{ J(u) + H(u,f + v^k) \}$;}
            \State{Update noise: $v^{k+1} = f + v^k - u$;} 
            \State{$k = k + 1$;}
         \Until{satisfied;}
        \State \textbf{return} $u$. 
    \end{algorithmic} 
\end{algorithm}

Instead of considering the given image with accumulative noise, we consider a direct process on noise part in this paper. The idea can source back to the `twicing' method proposed by Tukey, which corrects the approximate solution obtained from the first step by repeating the same processing on its residual. We noted that this idea is the modified Richardson iteration which has been generalized to image restoration problems by Michael Charest Jr. and his coworkers based on scalar valued functional. By defining the operation of finding the solution of a minimization problem as $T(\cdot)$, and starting with initial iterate $u^0$, the analog of the modified Richardson iteration reads as follows,
\begin{equation}
    u^{k+1} = u^k + T(f-u^k),
\end{equation}
which is equivalent to the following in terms of residuals
\begin{equation}
    r_{ex}^{k+1} = r_{ex}^k - T(r_{ex}^k),
\end{equation}
where the exact residual is defined as $r_{ex}^k =  f - u^k$. It also can be derived that $u^k =
u^0 + \sum_{i=0}^{k-1} T(r_{ex}^i)$ and $r_{ex}^k = r_{ex}^0 - \sum_{0}^{k-1} T(r_{ex}^i)$. Let us call the above the Richardson-like iteration. In this paper, we present several Richardson-like iterative algorithms based on the TV-Stokes model.

The paper is organized as follows. Our main results including the proposed algorithms are in \cref{sec:algs}, experimental results are in \cref{sec:numt}, and the conclusions follow in \cref{sec:conclusions}.

\section{Proposed algorithms and their convergence analysis}
\label{sec:algs}

In this section, we present several variants of the iterative regularization algorithm for the TV-Stokes model. Those algorithms are quite simple and of the form of Richardson iteration. We prove their convergence based on the Bregman distance.

\subsection{Prelimits}

Before we start to present the algorithm, let first consider two equivalent minimization problems.

\begin{lemma}
    $\forall \boldsymbol{\tau} \in \mathbb{R}^2$, define operator $\Pi,$ such that
    $\Pi(\boldsymbol{\tau}) = (I - \nabla \triangle^{\dagger} \nabla \cdot)\boldsymbol{\tau}.$
    The constrained problem \cref{eq:tvs1} is equivalent to the following unconstrained problem
    \begin{equation}\label{eq:tvs1_1_equivalent}
        \boldsymbol{\tau} = 
        \mbox{arg} 
        \min_{\boldsymbol{\tau} \in BV(\Omega)} 	
        \bigg\{
            J(\Pi\boldsymbol{\tau}) + 
            H(\boldsymbol{\tau},\boldsymbol{\tau}^0)
        \bigg\}.
    \end{equation}
\end{lemma}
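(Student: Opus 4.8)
The plan is to recognize $\Pi = I - \nabla\triangle^{\dagger}\nabla\cdot$ as the Leray (Helmholtz--Hodge) projection onto the subspace of divergence-free fields, and then to exploit the resulting orthogonal splitting of $L^2$ to reduce the unconstrained problem \cref{eq:tvs1_1_equivalent} to the constrained problem \cref{eq:tvs1}. First I would record the three algebraic properties of $\Pi$ that drive everything: (i) its range is divergence-free, since $\nabla\cdot\Pi\boldsymbol{\tau} = \nabla\cdot\boldsymbol{\tau} - \triangle\triangle^{\dagger}\nabla\cdot\boldsymbol{\tau} = 0$; (ii) every divergence-free field is fixed, $\nabla\cdot\boldsymbol{\tau}=0 \Rightarrow \Pi\boldsymbol{\tau} = \boldsymbol{\tau}$, whence $\Pi^2 = \Pi$; and (iii) $\Pi$ is self-adjoint, which follows from integrating by parts twice and using the self-adjointness of $\triangle^{\dagger}$. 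Together (i)--(iii) say that $\Pi$ is an orthogonal projection and that $I-\Pi = \nabla\triangle^{\dagger}\nabla\cdot$ is the complementary projection onto gradient fields, so $L^2$ splits orthogonally into divergence-free and gradient parts.

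Next I would decompose both the variable and the data accordingly, writing $\boldsymbol{\tau} = \boldsymbol{\tau}_d + \boldsymbol{\tau}_g$ and $\boldsymbol{\tau}^0 = \boldsymbol{\tau}^0_d + \boldsymbol{\tau}^0_g$ with $\boldsymbol{\tau}_d = \Pi\boldsymbol{\tau}$ divergence-free and $\boldsymbol{\tau}_g = (I-\Pi)\boldsymbol{\tau}$ a gradient. Since $J(\Pi\boldsymbol{\tau}) = J(\boldsymbol{\tau}_d)$ depends only on the divergence-free part, while orthogonality gives $H(\boldsymbol{\tau},\boldsymbol{\tau}^0) = \tfrac{\eta}{2}\|\boldsymbol{\tau}_d-\boldsymbol{\tau}^0_d\|^2 + \tfrac{\eta}{2}\|\boldsymbol{\tau}_g-\boldsymbol{\tau}^0_g\|^2$, the objective of \cref{eq:tvs1_1_equivalent} separates completely over the two orthogonal components. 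The gradient component appears only through $\tfrac{\eta}{2}\|\boldsymbol{\tau}_g-\boldsymbol{\tau}^0_g\|^2$ and is therefore driven to its unique minimizer $\boldsymbol{\tau}_g = \boldsymbol{\tau}^0_g$, contributing nothing to the optimal value.

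What remains is exactly the minimization of $J(\boldsymbol{\tau}_d) + \tfrac{\eta}{2}\|\boldsymbol{\tau}_d-\boldsymbol{\tau}^0_d\|^2$ over all divergence-free $\boldsymbol{\tau}_d$; carrying out the same orthogonal split of $H$ inside the constrained problem \cref{eq:tvs1} shows that this reduced problem coincides with \cref{eq:tvs1} up to the additive constant $\tfrac{\eta}{2}\|\boldsymbol{\tau}^0_g\|^2$, which does not affect the argmin. Hence the minimizers are related by $\boldsymbol{\tau}_d = \Pi\boldsymbol{\tau}$: the divergence-free part of the solution of \cref{eq:tvs1_1_equivalent} is the solution of \cref{eq:tvs1}, and the two problems are equivalent in this sense (and literally identical when $\boldsymbol{\tau}^0$ is itself divergence-free, as for the tangential data $\boldsymbol{\tau}^0 = (\nabla f)^{\perp}$). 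I expect the main obstacle to be analytic rather than algebraic: making the Helmholtz decomposition rigorous requires fixing the boundary conditions that render $\triangle^{\dagger}$ well-posed and self-adjoint and that force the boundary terms in step (iii) to vanish, and one must also check that $\Pi$ maps into a space on which $J$ is meaningful (i.e.\ that $\Pi\boldsymbol{\tau}\in BV(\Omega)$ when $\boldsymbol{\tau}\in BV(\Omega)$). I would handle this by working in the appropriate $L^2$-orthogonal setting, with divergence-free fields tangent to $\partial\Omega$ on one side and gradients of $H^1$ potentials on the other, so that the projection and its orthogonality hold exactly.
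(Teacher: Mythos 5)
Your proof is correct, but it takes a genuinely different route from the paper. The paper argues through duality: it writes $J(\boldsymbol{\tau})$ in its dual form with a dual variable $\mathbf{p}$, introduces a Lagrange multiplier $\lambda$ for the constraint $\nabla\cdot\boldsymbol{\tau}=0$, invokes the minimax theorem to freeze $\mathbf{p}$, solves the Euler--Lagrange system for $\lambda = \triangle^{\dagger}\nabla\cdot\nabla\cdot\mathbf{p}$, and then recognizes $\Pi$ acting on $\nabla\cdot\mathbf{p}$ after an adjointness manipulation, before passing back to the primal form. You instead stay entirely primal: you establish that $\Pi$ is the orthogonal Leray projection, split both the variable and the data along the divergence-free/gradient decomposition, observe that the objective of \cref{eq:tvs1_1_equivalent} separates so that the gradient component is driven to $\boldsymbol{\tau}^0_g$ and drops out, and identify the surviving problem with \cref{eq:tvs1} up to an additive constant. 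Your version is more elementary (no minimax theorem, no dual variable) and is more precise about the sense of the claimed equivalence --- in particular, it makes explicit that the two minimizers literally coincide only when $\boldsymbol{\tau}^0$ is itself divergence-free, which does hold in the algorithms since $\boldsymbol{\tau}^0=\nabla^{\bot}f$, but is otherwise a point the paper's statement leaves implicit. The paper's route, on the other hand, produces the dual formulation as a byproduct, which is the form actually used by the dual-based solver in the numerical experiments. Your closing caveats (boundary conditions making $\triangle^{\dagger}$ self-adjoint, and whether $\Pi$ preserves $BV$ so that $J(\Pi\boldsymbol{\tau})$ is meaningful) are legitimate analytic gaps, but they are equally present and equally unaddressed in the paper's own argument, so they do not count against your proposal relative to it.
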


\begin{proof}
    Let $\mathbf{p}$ be the dual variable such that $\mathbf{p} \in
    C_c^1(\Omega, \mathbb{R}^4)$ and $\vert\mathbf{p}\vert \leq 1$, the minimization problem
    in \cref{eq:tvs1} is thus
    \begin{equation}\label{eq:tvs1_1_reform}
        \min_{\boldsymbol{\tau}} 
        \max_{\substack{\lambda, \mathbf{p} \\ \vert p \vert \leq 1}} 
        \left\{
            \int_{\Omega}
                \langle \boldsymbol{\tau},\nabla \cdot \mathbf{p}\rangle + 
                \frac{\eta}{2} (\boldsymbol{\tau} - \boldsymbol{\tau}^0)^2 + 
                \langle \lambda, \nabla \cdot \boldsymbol{\tau} \rangle
            d\mathbf{x}
        \right\},
    \end{equation}
    where $\lambda \in \mathbb{R}$ is the Lagrange multiplier. By the Minimax theorem, cf. \cite{Sion1957}, we can firstly
    consider the minimization problem with respect to $\boldsymbol{\tau}$ as well as the the maximization with respect to $\lambda$ freezing $\mathbf{p}$. The corresponding Euler-Lagrange equations are 
    \begin{equation}\label{eq:el_tvs1_1}
        \nabla \cdot \mathbf{p} + 
        \eta (\boldsymbol{\tau} - \boldsymbol{\tau}^0) - 
        \nabla \lambda 
        = 0,
    \end{equation}
    and
    \begin{equation*}
        \nabla \cdot \boldsymbol{\tau} = 0.
    \end{equation*}
    Taking the divergence for the both sides of \cref{eq:el_tvs1_1}, we obtain the following relation with the help of Moore-Penrose pseudoinverse
    \begin{equation*}
        \lambda = \Delta^{\dagger} \nabla \cdot \nabla \cdot \mathbf{p}.
    \end{equation*}
    The considered problem \cref{eq:tvs1_1_reform} is accordingly
    \begin{equation*}
        \min_{\boldsymbol{\tau}} 
        \max_{\substack{\mathbf{p} \\ \vert p \vert \leq 1}} 
        \left\{
            \int_{\Omega}
                \langle \boldsymbol{\tau},\nabla \cdot \mathbf{p} \rangle + 
                \frac{\eta}{2} (\boldsymbol{\tau} - \boldsymbol{\tau}^0)^2 + 
                \langle \Delta^{\dagger} \nabla \cdot \nabla \cdot \mathbf{p}, \nabla
                \cdot \boldsymbol{\tau} \rangle
            d\mathbf{x}
        \right\},
    \end{equation*}
    which is exactly same as follows with adjoint
    \begin{equation*}
        \min_{\boldsymbol{\tau}} 
        \max_{\substack{\mathbf{p} \\ \vert p \vert \leq 1}} 
        \left\{
            \int_{\Omega}
                \langle \boldsymbol{\tau},(I-\nabla \Delta^{\dagger} \nabla \cdot) \nabla
                \cdot \mathbf{p} \rangle + 
                \frac{\eta}{2} (\boldsymbol{\tau} - \boldsymbol{\tau}^0)^2 
            d\mathbf{x}
        \right\}.
    \end{equation*}
    Rewriting with $\Pi$ operator as we defined, it is
    \begin{equation*}
        \min_{\boldsymbol{\tau}} 
        \max_{\substack{\mathbf{p} \\ \vert p \vert \leq 1}} 
        \left\{
            \int_{\Omega}
                \langle \boldsymbol{\tau},\Pi(\nabla \cdot \mathbf{p}) \rangle + 
                \frac{\eta}{2} (\boldsymbol{\tau} - \boldsymbol{\tau}^0)^2 
            d\mathbf{x}
        \right\},
    \end{equation*}
    which is equivalent to the following primal problem
    \begin{equation*}
        \min_{\boldsymbol{\tau}} 
        \left\{
            \int_{\Omega}
                \vert \nabla \Pi(\boldsymbol{\tau}) \vert + 
                \frac{\eta}{2} (\boldsymbol{\tau} - \boldsymbol{\tau}^0)^2 
            d\mathbf{x}
        \right\}.
    \end{equation*}
\end{proof}

It is worth to mention that $\Pi(\cdot)$ is exactly the orthogonal projection to the divergence free subspace, cf. \cite{Elo2009}. The consequent property is that, for all $\boldsymbol{\tau} \in Y$ such that $Y = \{\mathbf{m}: \nabla \cdot \mathbf{m} = 0 \}$, we have $\Pi(\boldsymbol{\tau}) = \boldsymbol{\tau}$.

Let us recall the second step of the TV-Stokes model, cf. \cref{eq:tvs2}, as follows.
\begin{equation}\label{eq:tvs2m}
    u = 
    \mbox{arg} 
    \min_{u \in BV(\Omega)} 
    \bigg\{ 
        J(u) -  
        \alpha
        \langle 
            \nabla u,
            \frac{\boldsymbol{\tau}^{\bot}}{\vert\boldsymbol{\tau}^{\bot}\vert} 
        \rangle +
        H(u,f)
   \bigg\},
\end{equation}
where $\alpha$ is the parameter for orientation matching term $- \langle \nabla u, \frac{\boldsymbol{\tau}^{\bot}}{\vert\boldsymbol{\tau}^{\bot}\vert} \rangle$. When $\alpha=1$, the above minimization problem degenerates to \cref{eq:tvs2}. By completing the square, we can reform the above problem as follows
\begin{equation}\label{eq:tvs2mq}
    u = 
    \mbox{arg} 
    \min_{u \in BV(\Omega)} 
    \bigg\{ 
        J(u) +
        H(u,f - \cfrac{\alpha}{\eta} \nabla \cdot \frac{\boldsymbol{\tau}^{\bot}}{\vert\boldsymbol{\tau}^{\bot}\vert})
   \bigg\}.
\end{equation}
Observing (\ref{eq:tvs2mq}), we can find out that there is an optimal decomposition $f = u + \frac{\alpha}{\eta} \nabla \cdot \frac{\boldsymbol{\tau}^{\bot}}{\vert\boldsymbol{\tau}^{\bot}\vert}$ corresponding to the fidelity parameter $\eta / 2$. According to Meyer's theory, cf. \cite{Meyer2001}, $\frac{\alpha}{\eta} \nabla \cdot \frac{\boldsymbol{\tau}^{\bot}}{\vert\boldsymbol{\tau}^{\bot}\vert}$ can be read as the high frequency part, which represents the fine structures and the noise part of the corrupted image $f$. Since it is the indistinguishable part from noise for ROF model, we can roughly say it is also of Gaussian distribution with mean $0$ if the noise is white Gaussian, that is $\frac{\alpha}{\eta} \nabla \cdot \frac{\boldsymbol{\tau}^{\bot}}{\vert\boldsymbol{\tau}^{\bot}\vert} \sim N(0,\sigma^2)$ for some unknown variance $\sigma^2$. Another thing we can find out from (\ref{eq:tvs2mq}) is that the model results smooth $u$ with the variance $\sigma^2 = 1/\eta$, cf. e.g., \cite{Chambolle2009, Chen2012}, considering Gaussian noise only. Up to here, we can find that the TV-Stokes model tends to find an optimal image $u$ which is close to ``clean'' image $f  - \frac{\alpha}{\eta} \nabla \cdot \frac{\boldsymbol{\tau}^{\bot}}{\vert\boldsymbol{\tau}^{\bot}\vert}$, where the noisy part $\frac{\alpha}{\eta} \nabla \cdot \frac{\boldsymbol{\tau}^{\bot}}{\vert\boldsymbol{\tau}^{\bot}\vert}$ has an amplitude equal to its variance level while $\alpha = 1$, that is $\sigma^2 = 1/\eta$.

In our proposed algorithms, we only consider Richardson-like iterations, applied to the residual. For Gaussian noise, the residual $r$ satisfies $r \sim N(0, \sigma^2)$ for some unknown variance $\sigma^2$. It is natural to assume that a fixed percentage of the residual is the uncertain part, cf. $\alpha \in [0,1]$ in (\ref{eq:tvs2mq}). When $\alpha = 0$, the model reduces to the ROF model.

The following lemmas are necessary for the convergence analysis.

\begin{lemma}
\label{lemma:gs}
    Given $a,b,c \in L^2(\Omega;\mathbb{R})$, where $b \sim N(0, \sigma^2)$, $a \sim N(b, \sigma_1^2)$ and $c \sim N(b, \sigma_2^2)$, such that $\sigma_1^2 \leq \sigma_2^2$, then $\Vert a \Vert \leq \Vert c \Vert$.
\end{lemma}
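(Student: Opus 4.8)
The plan is to read the statement as a comparison of (expected) squared $L^2$ norms and to exploit the hierarchical Gaussian structure. First I would make the generative model explicit: since $a \sim N(b,\sigma_1^2)$ and $c \sim N(b,\sigma_2^2)$ with $b \sim N(0,\sigma^2)$, I would write $a = b + \epsilon_1$ and $c = b + \epsilon_2$, where $\epsilon_1 \sim N(0,\sigma_1^2)$ and $\epsilon_2 \sim N(0,\sigma_2^2)$ are centered and, by the conditional construction, independent of $b$. This reduces the problem to an additive-noise decomposition around the common clean signal $b$.

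Next I would expand the squared norm and take expectations. Writing
\begin{equation*}
    \Vert a \Vert^2 = \Vert b + \epsilon_1 \Vert^2 = \Vert b \Vert^2 + 2\langle b, \epsilon_1 \rangle + \Vert \epsilon_1 \Vert^2,
\end{equation*}
the cross term $\langle b, \epsilon_1 \rangle = \int_{\Omega} b\,\epsilon_1\,d\mathbf{x}$ has expectation zero, because $\epsilon_1$ is independent of $b$ and mean zero. Using $E[b(\mathbf{x})^2] = \sigma^2$ and $E[\epsilon_1(\mathbf{x})^2] = \sigma_1^2$ pointwise, this gives $E\Vert a \Vert^2 = \vert\Omega\vert(\sigma^2 + \sigma_1^2)$, and the identical computation for $c$ yields $E\Vert c \Vert^2 = \vert\Omega\vert(\sigma^2 + \sigma_2^2)$.

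The conclusion then follows by monotonicity: since $\sigma_1^2 \leq \sigma_2^2$ we obtain $E\Vert a \Vert^2 \leq E\Vert c \Vert^2$, and taking square roots gives $\Vert a \Vert \leq \Vert c \Vert$ in the expected sense. The whole argument is thus a short variance computation once the independence of the noise from $b$ is used to kill the cross term.

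The main obstacle I anticipate is the passage from an inequality in expectation to the deterministic statement $\Vert a \Vert \leq \Vert c \Vert$ exactly as written, since for a single fixed noise realization the inequality can fail. It holds because the spatial integral over the large, pixelated domain $\Omega$ concentrates around its mean by a law-of-large-numbers argument, so that $\tfrac{1}{\vert\Omega\vert}\Vert a \Vert^2 \approx \sigma^2 + \sigma_1^2$ with high probability. I would therefore state and use the result under this interpretation, and the care in the proof goes into justifying the vanishing cross term and the approximation $\Vert \cdot \Vert^2 \approx \vert\Omega\vert \cdot (\text{variance})$, rather than into any delicate inequality.
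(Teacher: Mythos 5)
Your proposal follows essentially the same route as the paper's own proof: decompose $a = b + (a-b)$ and $c = b + (c-b)$ into independent Gaussians, add the variances to get $a \sim N(0,\sigma^2+\sigma_1^2)$ and $c \sim N(0,\sigma^2+\sigma_2^2)$, and conclude from $\sigma_1^2 \leq \sigma_2^2$. You are in fact somewhat more careful than the paper, which silently asserts the final step from a variance comparison to the deterministic norm inequality $\Vert a \Vert \leq \Vert c \Vert$, whereas you make the expectation computation explicit and correctly flag that the statement only holds in expectation or via a concentration argument over the domain.
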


\begin{proof}
    Since $a \sim N(b, \sigma_1^2)$, $a-b$ is a Gaussian distribution such that $a-b \sim N(0, \sigma_1^2)$. 
    Since $b$ and $a-b$ are two independent Gaussian distributions, their sum $a = b + (a - b)$ is another Gaussian distribution such that $a \sim N(0, \sigma^2 + \sigma_1^2)$.
    
    Similarly, the sum $c = b + (c - b)$ is also a Gaussian distribution such that $c \sim N(0, \sigma^2 + \sigma_2^2)$.
    
    And now, since $\sigma_1^2 \leq \sigma_2^2$, it follows that $\Vert a \Vert \leq \Vert c \Vert$.
\end{proof}

\begin{lemma}
\label{lemma:rof_fidelity_2}
    Suppose $u \in BV(\Omega;\mathbb{R})$. Consider two minimization problems same as the second step of TV-Stokes, i.e., $\min_u \{ J(u) - \alpha \langle \nabla u, \mathbf{v}/\vert\mathbf{v}\vert \rangle + \eta (u - f)^2 / 2 \}$, corresponding to two different fidelity parameters, $\eta_1$ and $\eta_2$, such that $\eta_1 \leq \eta_2$. If $u_1 = \arg \min_u \{ J(u) - \alpha \langle \nabla u, \mathbf{v}/\vert\mathbf{v}\vert \rangle + \eta_1 (u - f)^2 / 2 \}$ and $u_2 = \arg \min_u \{ J(u) - \alpha \langle \nabla u, \mathbf{v}/\vert\mathbf{v}\vert \rangle + \eta_2 (u - f)^2 / 2 \}$, then $\Vert u_1-f \Vert^2 \ge \Vert u_2 - f \Vert^2$.
\end{lemma}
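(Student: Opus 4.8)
The plan is to use the standard two-inequality (or ``cross'') argument that exploits the fact that the two energies differ only in their fidelity weight. First I would collect the $\eta$-independent part of the energy into a single functional
\begin{equation*}
    G(u) := J(u) - \alpha \langle \nabla u, \mathbf{v}/\vert\mathbf{v}\vert \rangle,
\end{equation*}
so that $u_i$ minimizes $E_{\eta_i}(u) = G(u) + \tfrac{\eta_i}{2}\Vert u - f\Vert^2$ for $i=1,2$. The crucial structural observation is that $\mathbf{v}$, $\alpha$, and $f$ are identical in both problems, so $G$ is literally the same functional in each; only the scalar weight in front of the quadratic term changes.

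Next I would write down the defining minimality inequalities. Since $u_1$ minimizes $E_{\eta_1}$ and $u_2$ minimizes $E_{\eta_2}$, testing each minimizer against the other competitor gives
\begin{align*}
    G(u_1) + \tfrac{\eta_1}{2}\Vert u_1 - f\Vert^2 &\le G(u_2) + \tfrac{\eta_1}{2}\Vert u_2 - f\Vert^2, \\
    G(u_2) + \tfrac{\eta_2}{2}\Vert u_2 - f\Vert^2 &\le G(u_1) + \tfrac{\eta_2}{2}\Vert u_1 - f\Vert^2.
\end{align*}
Adding the two inequalities cancels $G(u_1)+G(u_2)$ from both sides, leaving a relation purely in the residual norms. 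Collecting terms, this reduces to
\begin{equation*}
    \tfrac{1}{2}(\eta_1 - \eta_2)\bigl(\Vert u_1 - f\Vert^2 - \Vert u_2 - f\Vert^2\bigr) \le 0.
\end{equation*}

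Finally, since $\eta_1 \le \eta_2$ the factor $(\eta_1 - \eta_2)$ is nonpositive, so the second factor must be nonnegative, i.e. $\Vert u_1 - f\Vert^2 \ge \Vert u_2 - f\Vert^2$, which is exactly the claim. I expect no genuine analytic obstacle here: the argument uses only the two global-minimizer inequalities and never invokes convexity, differentiability, or the Euler--Lagrange equation, so it is quite robust. The one point that must be handled with care is the \emph{existence} of the minimizers $u_1,u_2$ in $BV(\Omega)$, so that the inequalities are meaningful; this is standard for this class of functionals (lower semicontinuity of $J$ together with coercivity supplied by the quadratic fidelity), and since the orientation term $-\alpha\langle\nabla u, \mathbf{v}/\vert\mathbf{v}\vert\rangle$ is linear in $\nabla u$ it does not disturb the well-posedness.
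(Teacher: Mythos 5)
Your proof is correct and is essentially the same argument as the paper's: the paper also tests each minimizer against the other and exploits that the two energies differ only in the weight of the quadratic fidelity term, merely arranging the two minimality inequalities as a chain (after splitting $\tfrac{\eta_2}{2}(u-f)^2$ into $\tfrac{\eta_1}{2}(u-f)^2+\tfrac{\eta_2-\eta_1}{2}(u-f)^2$) rather than adding them. The only cosmetic difference is that your symmetric ``add and cancel'' presentation is slightly cleaner; both reduce to $(\eta_1-\eta_2)\bigl(\Vert u_1-f\Vert^2-\Vert u_2-f\Vert^2\bigr)\le 0$.
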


\begin{proof}
    Rewrite the minimization problem for $\eta_2$ as follows.
    \begin{align}
    \label{eq:rof_u2_re_2}
        \begin{split}
            & \min_u \left \{ J(u) - \alpha \langle \nabla u, \cfrac{\mathbf{v}}{\vert\mathbf{v}\vert} \rangle + \cfrac{\eta_1}{2} (u - f)^2 + \cfrac{\eta_2 - \eta_1}{2} (u - f)^2 \right \}.
        \end{split}
    \end{align}
    Since $u_2$ is the minimizer of (\ref{eq:rof_u2_re_2}), the functional has following inequality
    \begin{align}
    \label{eq:rof_u2_re_ineq_2}
            & \int_{\Omega} J(u_2) - \alpha \langle \nabla u_2, \cfrac{\mathbf{v}}{\vert\mathbf{v}\vert} \rangle + \cfrac{\eta_1}{2} (u_2 - f)^2 + \cfrac{\eta_2 - \eta_1}{2} (u_2 - f)^2 \nonumber
                \\ 
            \leq & \int_{\Omega} J(u_1) - \alpha \langle \nabla u_1, \cfrac{\mathbf{v}}{\vert\mathbf{v}\vert} \rangle + \cfrac{\eta_1}{2} (u_1 - f)^2 + \cfrac{\eta_2 - \eta_1}{2} (u_1 - f)^2
                \\
            \leq & \int_{\Omega} J(u_2) - \alpha \langle \nabla u_2, \cfrac{\mathbf{v}}{\vert\mathbf{v}\vert} \rangle + \cfrac{\eta_1}{2} (u_2 - f)^2 + \cfrac{\eta_2 - \eta_1}{2} (u_1 - f)^2. \nonumber
    \end{align}
    Note that the first two terms in the functional are actually same as the minimization problem for $\eta_1$. $u_1$ therefore minimizes the energy composed by this two terms. 
    
    Comparing the first line and the last line of (\ref{eq:rof_u2_re_ineq_2}), we obtain $\Vert u_1-f \Vert^2 \ge \Vert u_2 - f \Vert^2$ by using relation $\eta_1 \leq \eta_2$.
\end{proof}

\begin{lemma}
\label{lemma:fidelity_constraint}
    Consider a minimization problems with ROF model, that is $r^* = \arg \min_r \{ J(r) + \eta (r - r^0)^2 / 2 \}$. Define $\eta = \beta/\gamma$ and $\beta \in (1,+\infty)$. There exists a constant $\gamma>0$ for any $r^0 \neq 0$ such that $r^* \neq 0$.
\end{lemma}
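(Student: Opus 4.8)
The plan is to show that the candidate $r=0$ cannot be the minimizer once the fidelity is strong enough, by exhibiting an admissible competitor with strictly smaller energy; since existence of a minimizer is not in question, this forces $r^*\neq 0$. Write the energy as $E(r):=J(r)+\frac{\eta}{2}\Vert r-r^0\Vert^2$. Because $J(0)=0$, we have $E(0)=\frac{\eta}{2}\Vert r^0\Vert^2$, so it suffices to produce some $r\in BV(\Omega)$ with $E(r)<\frac{\eta}{2}\Vert r^0\Vert^2$.

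First I would fix a direction $\phi\in BV(\Omega)$ with $\langle\phi,r^0\rangle>0$ and $J(\phi)<+\infty$. Such a $\phi$ always exists when $r^0\neq 0$: if $r^0\in BV(\Omega)$ one may simply take $\phi=r^0$, for which $\langle\phi,r^0\rangle=\Vert r^0\Vert^2>0$; in general the density of $BV(\Omega)$ in $L^2(\Omega)$ gives a $\phi$ with positive overlap. I would then test the energy along the ray $r=\epsilon\phi$, $\epsilon>0$. Using $J(\epsilon\phi)=\epsilon J(\phi)$ (positive homogeneity of the total variation) and expanding the quadratic term,
\begin{equation*}
    E(\epsilon\phi)-E(0)
    = \epsilon\big(J(\phi)-\eta\,\langle\phi,r^0\rangle\big)
      + \tfrac{\eta}{2}\epsilon^2\Vert\phi\Vert^2 .
\end{equation*}
For small $\epsilon>0$ the linear term dominates, so the sign of the difference is governed by the coefficient $J(\phi)-\eta\,\langle\phi,r^0\rangle$.

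It then remains to choose $\gamma$ so that this coefficient is strictly negative, i.e.\ $\eta>J(\phi)/\langle\phi,r^0\rangle$. Substituting $\eta=\beta/\gamma$, this is equivalent to $\gamma<\beta\,\langle\phi,r^0\rangle/J(\phi)$ (and to any $\gamma>0$ when $J(\phi)=0$, e.g.\ $\phi$ constant). Since $\beta\in(1,+\infty)$ and $\langle\phi,r^0\rangle>0$, the right-hand side is a strictly positive (possibly infinite) constant, so the admissible set of $\gamma$ is a nonempty interval $(0,\beta\,\langle\phi,r^0\rangle/J(\phi))$; picking any $\gamma$ there makes the linear coefficient negative, hence $E(\epsilon\phi)<E(0)$ for all sufficiently small $\epsilon$, and therefore $r^*\neq 0$.

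The point requiring the most care is that $J$ is not differentiable at $0$, so a naive Euler--Lagrange/first-order optimality argument is not directly available; this is precisely why I argue by direct energy comparison rather than by differentiating. The only other thing to check carefully is the admissibility and positive overlap of the test direction $\phi$, together with the degenerate case $J(\phi)=0$, in which the constraint on $\gamma$ is vacuous and every $\gamma>0$ works. An alternative route would characterize the threshold exactly via the dual (Meyer $G$-) norm of $r^0$, giving $r^*\neq 0$ iff $\eta\Vert r^0\Vert_*>1$, but the competitor argument is more elementary and suffices for the existence claim.
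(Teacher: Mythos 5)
Your argument is correct, and it reaches the same conclusion by a genuinely more elementary route than the paper. The paper's proof is a two-step appeal to Meyer's theory: it quotes the characterization that the ROF decomposition is nontrivial whenever $\Vert r^0\Vert_* > 1/\eta$, then quotes the duality inequality $\vert\int \tilde r\, r^0\vert \le \Vert\tilde r\Vert_{BV}\Vert r^0\Vert_*$ with $\tilde r = r^0$ to get the lower bound $\Vert r^0\Vert^2/\Vert r^0\Vert_{BV}\le\Vert r^0\Vert_*$, and finally sets $\gamma := \Vert r^0\Vert^2/\Vert r^0\Vert_{BV}$ so that $\beta>1$ forces $\Vert r^0\Vert_* \ge \gamma > \gamma/\beta = 1/\eta$. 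Your competitor argument along the ray $\epsilon\phi$ is essentially the proof of Meyer's threshold criterion unpacked: the condition $\eta > J(\phi)/\langle\phi,r^0\rangle$ for some admissible $\phi$ is exactly the statement $\eta\Vert r^0\Vert_*>1$ written out via the defining supremum of the dual norm, and your canonical choice $\phi=r^0$ reproduces the paper's $\gamma$ (up to the harmless replacement of the full $BV$-norm by the seminorm $J(r^0)$ in the denominator). What your version buys is self-containedness --- no black-box citation, and a clean handling of the degenerate constant case that the paper waves away with ``which is nature by considered problem'' --- together with an honest acknowledgment that one cannot differentiate $J$ at $0$. What the paper's version buys is the link to the exact $G$-norm characterization, which it reuses in \cref{remark:fidelity_constraint} to turn $\gamma$ into the computable discrete quantity $\Vert r^0\Vert_\infty h/2$; your proof as written does not by itself yield that discrete estimate. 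One small point of care: the lemma's quantifiers are loose, but both your $\gamma$ and the paper's depend on $r^0$, so you are proving the same statement the authors intend.
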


We will use the consequences from Meyer's theory, cf. \cite{Meyer2001}, to prove this lemma.
\begin{proof}
    According to Meyer's theory, \cite[Lemma~4 and Theorem~3 on p.~32]{Meyer2001}, if $\Vert r^0 \Vert_* > 1/\eta$, the ROF model generates a non-trivial decomposition, $r^0 = r^* + v$, that is $r^* \neq 0$ for any $r^0 \neq 0$.
    
    According to \cite[Lemma~3 on p.~31]{Meyer2001}, if $r^0 \in L^2(\mathbb{R}^2)$, then $\vert\int \tilde{r}(x) r^0(x) dx\vert \le \Vert\tilde{r}\Vert_{BV}\Vert r^0 \Vert_*$. By simply replacing $\tilde{r}(x)$ with $r^0(x)$, we obtain $\Vert r^0 \Vert^2 / \Vert r^0 \Vert_{BV} \le \Vert r^0 \Vert_*$. While $r^0 \neq 0$ and not constant over the entire domain $\Omega$, which is nature by considered problem, $\Vert r^0 \Vert^2 / \Vert r^0 \Vert_{BV} > 0$. Define $\gamma := \Vert r^0 \Vert^2 / \Vert r^0 \Vert_{BV}$. Since $\beta \in (1,+\infty)$ and $\eta = \beta/\gamma$, we have $\gamma > 1/\eta$. To sum up, the inequality, $\Vert r^0 \Vert_* > 1/\eta$, holds and, as the consequence from Meyer's theory, $r^* \neq 0$ while $r^0 \neq 0$. 
\end{proof}

\begin{remark}
\label{remark:fidelity_constraint}
{\sloppy For a discrete system, equipped with the finite center difference scheme for example, the divergence $\nabla \cdot \mathbf{g}$ can be expressed as follows.
    \begin{align*}
        \begin{split}
            r^0 = \nabla \cdot \mathbf{g} & = \partial_1 g_1 + \partial_2 g_2
            = \cfrac{g_1^+ - g_1^-}{2h} + \cfrac{g_2^+ - g_2^-}{2h},
        \end{split}
    \end{align*}
    where $^+$ and $^-$ denote the forward and backward positions, respectively. $h$ is the uniform discretized unit.
    We thus obtain the following inequality from the above definition.
    \begin{align*}
            \vert r^0 \vert^2 & = \left(\cfrac{g_1^+ - g_1^-}{2h} + \cfrac{g_2^+ - g_2^-}{2h}\right)^2
            \\
                  & \le 2 \left(\cfrac{g_1^+ + g_2^+}{2h}\right)^2 + 2\left(\cfrac{g_1^- + g_2^-}{2h}\right)^2
            \\
                  & \le \cfrac{1}{h^2} ( (g_1^+)^2 + (g_2^+)^2 + (g_1^-)^2 + (g_2^-)^2 )
            \\    & \le \cfrac{4}{h^2} ( \overline{g_1^2} + \overline{g_2^2} )
            \\    & \sim \cfrac{4}{h^2} ( g_1^2 + g_2^2 ),
    \end{align*}
    where $\overline{(\cdot)}$ denotes the average value over the finite volume.
    Consider the $L^{\infty}$ norm, we obtain
    \begin{align*}
        \begin{split}
            \Vert r^0 \Vert_{\infty} & \le \cfrac{2}{h} \Vert\mathbf{g}\Vert_{\infty}.
        \end{split}
    \end{align*}
    Since $\Vert r^0 \Vert_*$ is the infimum of $\Vert\mathbf{g}\Vert_{\infty}$, we find $\Vert r^0 \Vert_{\infty} h / 2 \le \Vert r^0 \Vert_*$ corresponding to $\gamma = \Vert r^0 \Vert_{\infty} h/2$. In practice, $h = 1$.}
\end{remark}

\subsection{Iterative regularization for the first step of TV-Stokes}

In this subsection, we only consider the Richardson-like iteration on the first step of TV-Stokes model. Following from the unconstrained problem \cref{eq:tvs1_1_equivalent}, with the help of $\Pi$ operator, our proposed Richardson-like algorithm is as follows, 
\begin{algorithm}
    \caption{Iterative regularization applied to the $1^{st}$ step of TV-Stokes}
    \label{alg:tvs1}
    \begin{algorithmic}[1]
        \State{Initialize $k = 0$, $\boldsymbol{\tau}= 0$, $\mathbf{r}_{ex}^0 = \nabla^{\bot} f$;}
        \Repeat
            \State{$k = k + 1$;}
            \State\label{alg:tvs1_itr}{$\mathbf{r}^{k} = \mbox{arg} \min_{\mathbf{r} \in BV(\Omega)} \{ 
            J(\Pi\mathbf{r}) + H(\mathbf{r},\mathbf{r}_{ex}^{k-1}) \}$;}
            \State\label{alg:tvs1_itr_update}{$\mathbf{r}_{ex}^{k}  = \mathbf{r}_{ex}^{k-1} -
            \mathbf{r}^{k}$;} 
            \State{$\boldsymbol{\tau} = \boldsymbol{\tau} + \mathbf{r}^{k}$;}
        \Until {satisfied;}
        \State{$u = \mbox{arg} \min_{u \in BV(\Omega)} \{ J(u) - \langle \nabla u,
        \frac{\boldsymbol{\tau}^{\bot}}{\vert\boldsymbol{\tau}^{\bot}\vert} \rangle + H(u,f) \}$;}
        \State \textbf{return} $u.$
    \end{algorithmic} 
\end{algorithm}

Let us define a convex functional $Q^{\mathbf{s}^{k-1}}(\mathbf{r})^k$ for each iteration in \Cref{alg:tvs1} with
\begin{equation}
    Q^{\mathbf{s}^{k-1}}(\mathbf{r})^k = 
    H(\mathbf{r},0) +
    J(\Pi\mathbf{r}) -
    J(\Pi\mathbf{r}^{k-1}) -
    \langle \mathbf{s}^{k-1}, \mathbf{r} - \mathbf{r}^{k-1} \rangle,
\end{equation}
where $\mathbf{r}^{k-1}$ denotes the minimizer for $Q^{\mathbf{s}^{k-2}}(\mathbf{r})^{k-1}$, and $\mathbf{r}^k_{ex} := \mathbf{r}^{k-1}_{ex} - \mathbf{r}^k$ is the exact residual, giving $\mathbf{r}^0_{ex} = \nabla^{\bot} f$. By defining $\mathbf{s}^{k-1} := \eta \mathbf{r}_{ex}^{k-1}$, considering the problem $\mathbf{r}^{k} = \mbox{arg} \min_{\mathbf{r} \in BV(\Omega)} Q^{\mathbf{s}^{k-1}}(\mathbf{r})^k$, we have
\begin{align}
    \mathbf{r}^{k} &= \mbox{arg} \min_{\mathbf{r} \in BV(\Omega)} Q^{\mathbf{s}^{k-1}}(\mathbf{r})^k \nonumber \\
                   &= \mbox{arg} \min_{\mathbf{r} \in BV(\Omega)} \{
                        H(\mathbf{r},0) +
                        J(\Pi\mathbf{r}) -
                        J(\Pi\mathbf{r}^{k-1}) -
                        \langle \eta \mathbf{r}_{ex}^{k-1}, \mathbf{r} - \mathbf{r}^{k-1} \rangle \}
                        \nonumber \\
                   &= \mbox{arg} \min_{\mathbf{r} \in BV(\Omega)} \{
                        H(\mathbf{r},0) +
                        J(\Pi\mathbf{r}) -
                        \langle \eta \mathbf{r}_{ex}^{k-1}, \mathbf{r} \rangle \}
                        \nonumber \\   
                   &=  \mbox{arg} \min_{\mathbf{r} \in BV(\Omega)} \{ 
                        H(\mathbf{r},\mathbf{r}_{ex}^{k-1}) +
                        J(\Pi\mathbf{r}) \}, \label{eq:tvs1-min-k}
\end{align}
which implies the considered problem $\mathbf{r}^{k} = \mbox{arg} \min_{\mathbf{r} \in BV(\Omega)} Q^{\mathbf{s}^{k-1}}(\mathbf{r})^k$ is equivalent to the problem listed on the line \ref{alg:tvs1_itr} in \Cref{alg:tvs1} since the terms in $Q^{\mathbf{s}^{k-1}}(\mathbf{r})^k$ without $\mathbf{r}$ are constants for the $k^{th}$ iteration. The \Cref{alg:tvs1} is therefore can be reformed as \Cref{alg:tvs1.2}
\begin{algorithm}
    \caption{Bregmanized version of the iterative regularization \Cref{alg:tvs1}}
    \label{alg:tvs1.2}
    \begin{algorithmic}[1]
        \State{Initialize $k = 0$, $\boldsymbol{\tau}=0$, $\mathbf{r}_{ex}^0 = \nabla^{\bot} f$;}
        \Repeat
            \State{$k = k + 1$;}
            \State\label{alg:tvs1_itr_reform}{$\mathbf{r}^{k} = \mbox{arg} \min_{\mathbf{r} \in BV(\Omega)}
            Q^{\mathbf{s}^{k-1}}(\mathbf{r})^k$;}
            \State\label{alg:tvs1_itr_update_reform}{$\mathbf{r}_{ex}^{k}  = \mathbf{r}_{ex}^{k-1} -
            \mathbf{r}^{k}$;} 
            \State{$\mathbf{s}^k = \eta \mathbf{r}_{ex}^k $;}
            \State{$\boldsymbol{\tau} = \boldsymbol{\tau} + \mathbf{r}^{k}$;}
        \Until{satisfied;}
        \State{$u = \mbox{arg} \min_{u \in BV(\Omega)} \{ J(u) - \langle \nabla u,
        \frac{\boldsymbol{\tau}^{\bot}}{\vert\boldsymbol{\tau}^{\bot}\vert} \rangle + H(u,f) \}$;}
        \State \textbf{return} $u.$
    \end{algorithmic} 
\end{algorithm}

\subsubsection{Well-definedness of iterates}

Let us start with a simple case without iteration, specifically for a fixed $k$. The considered minimization problem is \cref{eq:tvs1-min}. For such a given problem, we can find the solution exists and is unique.
\begin{lemma}\label{lemma:unique-solution}
    Let $\boldsymbol{\mathcal{R}} = \{ \mathbf{r} \vert \mathbf{r} \in BV(\Omega; \mathbb{R}^2), \Pi \mathbf{r} = \mathbf{r} \}$, $F(\mathbf{r}) = J(\Pi\mathbf{r}) + 
            H(\mathbf{r},\mathbf{z})$ and $\Pi \mathbf{z} = \mathbf{z}$.
    Consider the problem to find $\mathbf{r}^*$ such that 
        \begin{equation}\label{eq:tvs1-min}
        \mathbf{r}^* \in \boldsymbol{\mathcal{R}}, \quad F(\mathbf{r}^*) = \inf_{\mathbf{r} \in \boldsymbol{\mathcal{R}}} F(\mathbf{r}).
    \end{equation}
    The solution for this problem exists and is unique.
\end{lemma}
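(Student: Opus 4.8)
The plan is to apply the direct method of the calculus of variations for existence and to exploit strict convexity of the fidelity term for uniqueness. The key first observation is that on the feasible set the projection acts trivially, $\Pi\mathbf{r} = \mathbf{r}$, so that $F$ restricted to $\boldsymbol{\mathcal{R}}$ reduces to the standard energy $F(\mathbf{r}) = J(\mathbf{r}) + \frac{\eta}{2}\Vert\mathbf{r}-\mathbf{z}\Vert^2$ posed on the closed divergence-free subspace $\boldsymbol{\mathcal{R}}$. Since $\Pi$ is a bounded orthogonal projection on $L^2(\Omega;\mathbb{R}^2)$ (as recorded after the first lemma), its range $\boldsymbol{\mathcal{R}}$ is a closed subspace, hence weakly closed; this is precisely what will keep limits of minimizing sequences feasible.

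For existence, I would first note that $F \geq 0$, so $m := \inf_{\mathbf{r}\in\boldsymbol{\mathcal{R}}} F(\mathbf{r})$ satisfies $m \geq 0$, and $m < \infty$ because $\boldsymbol{\mathcal{R}}$ contains competitors of finite energy (e.g. $\mathbf{z}$ itself when $\mathbf{z}\in BV$, giving $m \leq J(\mathbf{z})$). Take a minimizing sequence $\{\mathbf{r}_n\}\subset\boldsymbol{\mathcal{R}}$ with $F(\mathbf{r}_n)\to m$. The quadratic fidelity is coercive: $\frac{\eta}{2}\Vert\mathbf{r}_n - \mathbf{z}\Vert^2 \leq F(\mathbf{r}_n)$ is bounded, so $\{\mathbf{r}_n\}$ is bounded in $L^2(\Omega;\mathbb{R}^2)$, and together with the bound on $J(\mathbf{r}_n)$ this yields a uniform bound in $BV(\Omega;\mathbb{R}^2)$. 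By weak-$*$ compactness of bounded sets in $BV$ and the compact embedding $BV\hookrightarrow L^1$, I would pass to a subsequence (not relabeled) with $\mathbf{r}_n \to \mathbf{r}^*$ in $L^1$ and $\mathbf{r}_n \rightharpoonup \mathbf{r}^*$ weakly in $L^2$. Weak closedness of $\boldsymbol{\mathcal{R}}$ gives $\mathbf{r}^*\in\boldsymbol{\mathcal{R}}$. Lower semicontinuity then closes the argument: $J$ is lower semicontinuous with respect to $L^1$-convergence and the convex continuous map $\mathbf{r}\mapsto\frac{\eta}{2}\Vert\mathbf{r}-\mathbf{z}\Vert^2$ is weakly lower semicontinuous, so $F(\mathbf{r}^*) \leq \liminf_n F(\mathbf{r}_n) = m$, forcing $F(\mathbf{r}^*) = m$.

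For uniqueness, I would invoke strict convexity. Although $J$ is only convex, the fidelity $\mathbf{r}\mapsto\frac{\eta}{2}\Vert\mathbf{r}-\mathbf{z}\Vert^2$ is strictly convex, so $F$ is strictly convex on the convex set $\boldsymbol{\mathcal{R}}$. If $\mathbf{r}_1^*\neq\mathbf{r}_2^*$ were two minimizers, their midpoint would lie in $\boldsymbol{\mathcal{R}}$ and satisfy $F\bigl(\tfrac{1}{2}(\mathbf{r}_1^*+\mathbf{r}_2^*)\bigr) < \tfrac{1}{2}F(\mathbf{r}_1^*)+\tfrac{1}{2}F(\mathbf{r}_2^*)=m$, contradicting minimality. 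Hence the minimizer is unique.

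The step I expect to be the main obstacle is the compactness together with lower-semicontinuity in the $BV$ setting, specifically verifying that the weak-$*$ limit remains feasible while $J$ does not increase in the limit. Here the reduction $\Pi\mathbf{r}=\mathbf{r}$ on $\boldsymbol{\mathcal{R}}$ is what makes the lower semicontinuity of $J(\Pi\,\cdot)$ tractable, since otherwise one would have to control how $\Pi$ interacts with weak-$*$ $BV$ convergence rather than appealing directly to the classical lower semicontinuity of the total variation.
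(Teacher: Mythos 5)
Your proposal is correct and follows essentially the same route as the paper's own proof: the direct method (minimizing sequence, boundedness in the equivalent $BV$-norm, weak compactness, lower semicontinuity of $F$) for existence, and strict convexity of the quadratic fidelity for uniqueness. You in fact supply more detail than the paper does at the two points it glosses over—why the minimizing sequence is $BV$-bounded (coercivity of the fidelity plus the bound on $J$) and why the weak limit stays in the divergence-free subspace (weak closedness of the range of $\Pi$)—but the argument is the same one.
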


\begin{proof}
    Let 
    \begin{equation}\label{eq:inf1}
        m := \inf_{\mathbf{r} \in \boldsymbol{\mathcal{R}}} F(\mathbf{r}),
    \end{equation}
    and  $\{\mathbf{r}^j\}$ is a minimizing sequence such that
    \begin{equation}\label{eq:seq1}
        \mathbf{r}^j \in \boldsymbol{\mathcal{R}}, \quad \lim_{j \rightarrow + \infty} F(\mathbf{r}^j) \rightarrow m.
    \end{equation} Define an equivalent $BV$-norm \cite{Eggermont2009} as 
    \begin{equation}
        \Vert\mathbf{r}\Vert_{BV(\Omega;\mathbb{R}^2)} = \int_{\Omega} \vert\nabla \mathbf{r}\vert d\mathbf{x} + \Vert \mathbf{r} \Vert.
    \end{equation}
    Followed from the above definition, we have that the sequence $\{\mathbf{r}^j\}$ is bounded in $BV(\Omega; \mathbb{R}^2)$, and consequently there exists a convergent subsequence $\{\mathbf{r}^k\}$ such that
    \begin{equation}
        \mathbf{r}^k \underset{\boldsymbol{\mathcal{R}}}{\rightharpoonup} \hat{\mathbf{r}}
    \end{equation}
    By the lower semicontinuity of $F$, we have
    \begin{equation}
        \underset{\mathbf{r}^k \rightharpoonup \hat{\mathbf{r}}}{\underline{\lim}} F(\mathbf{r}^k) \ge F(\hat{\mathbf{r}})
    \end{equation}
    Due to \cref{eq:seq1}, we obtain 
    \begin{equation}
        m \ge F(\hat{\mathbf{r}}).
    \end{equation}    
    But owing to \cref{eq:inf1}, $m \le F(\hat{\mathbf{r}})$. Consequently $\hat{\mathbf{r}}$ is indeed a minimizer.
    Furthermore, because $F$ is strictly convex, the solution is unique.
\end{proof}

\begin{proposition}
    Setting $\mathbf{r}^0_{ex} = \nabla^{\bot}f$, $\mathbf{s}^0 := \eta \mathbf{r}^0_{ex}$, and 
    $\mathbf{q}^k = \partial H(\mathbf{r}^k,0) = \eta \mathbf{r}^k$, for each $k \in \mathbb{N}$, there is
    an unique minimizer $\mathbf{r}^k$ of $Q^{\mathbf{s}^{k-1}}(\mathbf{r})^k$, and a subgradient $\mathbf{s}^k \in \partial 
    J(\Pi \mathbf{r}^k)$ such that 
    \begin{equation}\label{eq:s_relation1}
        \mathbf{s}^k + \mathbf{q}^k = \mathbf{s}^{k-1}
    \end{equation}
\end{proposition}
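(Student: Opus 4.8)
The plan is to argue by induction on $k$, carrying along the inductive hypothesis that $\mathbf{s}^{k-1} = \eta\mathbf{r}_{ex}^{k-1}$ is well defined and that the exact residual lies in the divergence-free space, i.e. $\Pi\mathbf{r}_{ex}^{k-1} = \mathbf{r}_{ex}^{k-1}$. For the base case $k=1$ I would observe that $\mathbf{r}_{ex}^0 = \nabla^\bot f$ satisfies $\nabla\cdot\nabla^\bot f = 0$, so $\mathbf{r}_{ex}^0 \in \boldsymbol{\mathcal{R}}$ and $\mathbf{s}^0 = \eta\mathbf{r}_{ex}^0$ furnishes the required initial datum, putting us in position to invoke the earlier lemma.

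For existence and uniqueness of $\mathbf{r}^k$, I would first discard from $Q^{\mathbf{s}^{k-1}}(\mathbf{r})^k$ the terms $-J(\Pi\mathbf{r}^{k-1})$ and $\langle\mathbf{s}^{k-1},\mathbf{r}^{k-1}\rangle$, which are constants for the $k$-th iterate, and then reuse the completing-the-square manipulation already displayed in \eqref{eq:tvs1-min-k} to recast the minimization as $\min_{\mathbf{r}}\{H(\mathbf{r},\mathbf{r}_{ex}^{k-1}) + J(\Pi\mathbf{r})\}$. This is precisely the functional $F$ of \cref{lemma:unique-solution} with $\mathbf{z} = \mathbf{r}_{ex}^{k-1}$, and the inductive hypothesis $\Pi\mathbf{r}_{ex}^{k-1} = \mathbf{r}_{ex}^{k-1}$ supplies the standing assumption $\Pi\mathbf{z}=\mathbf{z}$. \Cref{lemma:unique-solution} then delivers a unique minimizer $\mathbf{r}^k \in \boldsymbol{\mathcal{R}}$.

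Next I would extract the subgradient relation from the first-order optimality condition $0 \in \partial_{\mathbf{r}} Q^{\mathbf{s}^{k-1}}(\mathbf{r}^k)^k$. Computing term by term, the fidelity piece gives $\partial_{\mathbf{r}} H(\mathbf{r},0)\big|_{\mathbf{r}^k} = \eta\mathbf{r}^k = \mathbf{q}^k$, the linear term contributes $-\mathbf{s}^{k-1}$, and $J(\Pi\mathbf{r})$ contributes an element of $\partial(J\circ\Pi)(\mathbf{r}^k)$. Since $\Pi$ is the orthogonal, hence self-adjoint and idempotent, projection onto $\boldsymbol{\mathcal{R}}$, the subdifferential chain rule identifies this last contribution with a subgradient $\mathbf{s}^k \in \partial J(\Pi\mathbf{r}^k)$; setting the inclusion's representative to zero yields exactly $\mathbf{q}^k + \mathbf{s}^k - \mathbf{s}^{k-1} = 0$, which is \eqref{eq:s_relation1}.

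Finally, to close the induction and check consistency with the prescription $\mathbf{s}^k := \eta\mathbf{r}_{ex}^k$, I would substitute \eqref{eq:s_relation1} to get $\mathbf{s}^k = \mathbf{s}^{k-1} - \mathbf{q}^k = \eta\mathbf{r}_{ex}^{k-1} - \eta\mathbf{r}^k = \eta(\mathbf{r}_{ex}^{k-1} - \mathbf{r}^k) = \eta\mathbf{r}_{ex}^k$ by the update on line~\ref{alg:tvs1_itr_update_reform}; because $\mathbf{r}^k$ and $\mathbf{r}_{ex}^{k-1}$ both lie in $\boldsymbol{\mathcal{R}}$, their difference $\mathbf{r}_{ex}^k$ is again divergence-free, which restores the inductive hypothesis at level $k$. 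The step I expect to be most delicate is the subdifferential calculus for the composition $J\circ\Pi$: one must legitimately pass from an element of $\partial(J\circ\Pi)(\mathbf{r}^k)$ to a genuine subgradient of $J$ at $\Pi\mathbf{r}^k$, leaning on the self-adjointness and idempotence of $\Pi$ together with the fact that the minimizer already sits in the range of $\Pi$. The nonsmoothness and potential non-uniqueness of the subgradient are where the real care is needed, whereas the remaining algebra is routine.
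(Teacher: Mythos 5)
Your proposal is correct and follows essentially the same route as the paper: induction on $k$, completing the square to reduce $Q^{\mathbf{s}^{k-1}}(\mathbf{r})^k$ to the form covered by \cref{lemma:unique-solution}, and reading off $\mathbf{s}^k+\mathbf{q}^k=\mathbf{s}^{k-1}$ from the Euler--Lagrange optimality condition together with the update $\mathbf{r}_{ex}^k=\mathbf{r}_{ex}^{k-1}-\mathbf{r}^k$. The only differences are refinements in your favour: you explicitly carry the invariant $\Pi\mathbf{r}_{ex}^{k-1}=\mathbf{r}_{ex}^{k-1}$ needed to invoke the lemma (the paper leaves this implicit), and the chain-rule subtlety you flag largely dissolves once one reads $\partial J(\Pi\,\cdot\,)$ throughout as the subdifferential of the composite functional $\mathbf{r}\mapsto J(\Pi\mathbf{r})$, which is how the paper uses it in the Bregman distance.
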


\begin{proof}
    The well-definedness for each iteration follows directly from \cref{lemma:unique-solution}. The relation between $\mathbf{s}$ and $\mathbf{q}$ is proved by induction. For k = 1, we have 
    \begin{equation*}
        \mbox{arg} \min_{\mathbf{r} \in BV(\Omega)} Q^{\mathbf{s}^0}(\mathbf{r})^1 = \mbox{arg} \min_{\mathbf{r} \in
        BV(\Omega)} \{ H(\mathbf{r},\mathbf{r}_{ex}^0) + J(\Pi\mathbf{r}) \}.
    \end{equation*} 
    The relation $\mathbf{s}^1 + \mathbf{q}^1 = \mathbf{s}^0$ holds by defining $\mathbf{s}^1 = \eta \mathbf{r}_{ex}^1$ which exactly can be deduced by the relations of $\mathbf{q}^1 = \eta \mathbf{r}^1$, $\mathbf{s}^0 = \eta \mathbf{r}_{ex}^0$ and $\mathbf{r}_{ex}^1 = \mathbf{r}_{ex}^0 - \mathbf{r}^1$. Taking the observation of optimal condition for the case $k = 1$
    \begin{equation*}
        \partial J(\Pi \mathbf{r}^1) + \eta \mathbf{r}^1 - \mathbf{s}^0 \ni 0,
    \end{equation*}
    which is the same as 
    \begin{equation*}
        \partial J(\Pi \mathbf{r}^1) \ni \eta \mathbf{r}_{ex}^1,
    \end{equation*}    
    we have $\mathbf{s}^1 \in \partial J(\Pi \mathbf{r}^1)$, cf. \cite{Osher2005} where $\partial J$ is the subgradient of $J$ in Euclidean space $\mathbb{R}^2$. Assuming that $\mathbf{s}^{k-1} = \eta \mathbf{r}_{ex}^{k-1} \in \partial J(\Pi \mathbf{r}^{k-1})$ holds, the $k^{th}$ case is 
    \begin{eqnarray*}
        &  \mbox{arg} \min_{\mathbf{r} \in BV(\Omega)} Q^{\mathbf{s}^{k-1}}(\mathbf{r})^k & \\
        &=& \mbox{arg} \min_{\mathbf{r} \in BV(\Omega)} 
            \{\frac{\eta}{2} \mathbf{r}^2 + 
                J(\Pi \mathbf{r}) - 
                J(\Pi \mathbf{r}^{k-1}) -
                \langle \eta \mathbf{r}_{ex}^{k-1}, \mathbf{r} - \mathbf{r}^{k-1} \rangle \} \\
        &=& \mbox{arg} \min_{\mathbf{r} \in BV(\Omega)} 
            \{\frac{\eta}{2}(\mathbf{r}-\mathbf{r}_{ex}^{k-1})^2 + 
            J(\Pi \mathbf{r}) \}.
    \end{eqnarray*}
    The optimal condition is accordingly
    \begin{equation*}
        \eta \mathbf{r}^k -
        \mathbf{s}^{k-1} +
        \partial J(\Pi\mathbf{r}^k) 
        \ni 0.
    \end{equation*}
    Since $\mathbf{s}^{k-1} = \eta \mathbf{r}_{ex}^{k-1}$ and $\mathbf{q}^k = \partial H(\mathbf{r}^k, 0) = \eta \mathbf{r}^k$, it is easy to find that $\mathbf{s}^k = \eta (\mathbf{r}_{ex}^{k-1} - \mathbf{r}^k) = \eta \mathbf{r}_{ex}^k \in \partial J(\Pi\mathbf{r}^k)$, and thus we obtain
    \cref{eq:s_relation1}.
\end{proof}

\subsubsection{Convergence analysis}

We define the generalized Bregman distance associated with $J(\Pi(\cdot))$ as follows 
\begin{equation*}
    D^{\mathbf{s}}(\mathbf{w},\mathbf{m}) := 
    J(\Pi\mathbf{w}) - 
    J(\Pi\mathbf{m}) -
    \langle \mathbf{s}, \mathbf{w} - \mathbf{m} \rangle,
\end{equation*}
where $\mathbf{s}$ is the subgradient for $J(\Pi\mathbf{m})$.
\begin{proposition}
    The sequence $H(\mathbf{r}^k,0)$ is monotonically nonincreasing, and 
    \begin{subequations}
        \begin{align}
            &H(\mathbf{r}^k,0) 
            \leq H(\mathbf{r}^k,0) +
                D^{\mathbf{s}^{k-1}}(\mathbf{r}^k,\mathbf{r}^{k-1}) 
            \leq H(\mathbf{r}^{k-1},0),\label{eq:d_relation1} \\
            &D^{\mathbf{s}^k}(\mathbf{r},\mathbf{r}^k) + 
            D^{\mathbf{s}^{k-1}}(\mathbf{r}^k,\mathbf{r}^{k-1}) + 
            H(\mathbf{r}^k,0) 
            \leq H(\mathbf{r},0) + 
                D^{\mathbf{s}^{k-1}}(\mathbf{r},\mathbf{r}^{k-1}),\label{eq:d_relation2}
        \end{align}
    \end{subequations}
    subject to $k \in \mathbb{N} \setminus \{ 1 \}$.
\end{proposition}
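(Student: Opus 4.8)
The plan is to observe first that the per-iteration functional is itself a fidelity term plus a Bregman distance: directly from the definitions of $Q$ and of $D^{\mathbf{s}}$ one has $Q^{\mathbf{s}^{k-1}}(\mathbf{r})^k = H(\mathbf{r},0) + D^{\mathbf{s}^{k-1}}(\mathbf{r},\mathbf{r}^{k-1})$, where the identification is purely algebraic, while the fact that $D^{\mathbf{s}^{k-1}}$ is a genuine (nonnegative) Bregman distance relies on $\mathbf{s}^{k-1} \in \partial J(\Pi\mathbf{r}^{k-1})$, established in the preceding proposition for $k \ge 2$. With this identification, \cref{eq:d_relation1} and the monotonicity claim come out together. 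For the lower bound I would invoke $D^{\mathbf{s}^{k-1}}(\mathbf{r}^k,\mathbf{r}^{k-1}) \ge 0$, which holds because $J(\Pi(\cdot))$ is convex and $\mathbf{s}^{k-1}$ is a genuine subgradient at $\mathbf{r}^{k-1}$. For the upper bound I would use that $\mathbf{r}^k$ minimizes $Q^{\mathbf{s}^{k-1}}(\cdot)^k$, so $Q^{\mathbf{s}^{k-1}}(\mathbf{r}^k)^k \le Q^{\mathbf{s}^{k-1}}(\mathbf{r}^{k-1})^k$; since $D^{\mathbf{s}^{k-1}}(\mathbf{r}^{k-1},\mathbf{r}^{k-1}) = 0$, the right-hand side collapses to $H(\mathbf{r}^{k-1},0)$. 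Chaining the two bounds and discarding the nonnegative Bregman term then yields $H(\mathbf{r}^k,0) \le H(\mathbf{r}^{k-1},0)$, i.e.\ monotonicity.

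The heart of the argument is \cref{eq:d_relation2}, and here I would lean on the algebraic three-point identity for Bregman distances: a direct expansion, in which all the $J(\Pi\cdot)$ terms cancel, gives $D^{\mathbf{s}^k}(\mathbf{r},\mathbf{r}^k) - D^{\mathbf{s}^{k-1}}(\mathbf{r},\mathbf{r}^{k-1}) + D^{\mathbf{s}^{k-1}}(\mathbf{r}^k,\mathbf{r}^{k-1}) = \langle \mathbf{s}^{k-1} - \mathbf{s}^k, \mathbf{r} - \mathbf{r}^k\rangle$ for every admissible $\mathbf{r}$. I would then substitute the subgradient recursion $\mathbf{s}^{k-1} - \mathbf{s}^k = \mathbf{q}^k = \eta\mathbf{r}^k = \partial H(\mathbf{r}^k,0)$ from \cref{eq:s_relation1}, turning the right-hand side into $\langle \mathbf{q}^k, \mathbf{r} - \mathbf{r}^k\rangle$. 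Finally, convexity of the quadratic fidelity $H(\cdot,0)$ supplies the subgradient inequality $\langle \mathbf{q}^k, \mathbf{r} - \mathbf{r}^k\rangle \le H(\mathbf{r},0) - H(\mathbf{r}^k,0)$, which is in fact an equality up to the nonnegative remainder $\tfrac{\eta}{2}\|\mathbf{r} - \mathbf{r}^k\|^2$. Plugging this in and rearranging produces exactly \cref{eq:d_relation2}.

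I would emphasize that the only load-bearing ingredients are the three-point identity, the subgradient recursion \cref{eq:s_relation1}, and the convexity of $H$, so the calculation is essentially bookkeeping once these are in place. The main obstacle I anticipate is not the estimate itself but its hypotheses: one must ensure that $\mathbf{s}^{k-1}$ is an actual subgradient of $J(\Pi\cdot)$ at $\mathbf{r}^{k-1}$ rather than a merely formal quantity, which requires $k \ge 2$ — this is precisely why the statement excludes $k = 1$, since the initialization $\mathbf{s}^0 = \eta\mathbf{r}^0_{ex} = \eta\nabla^{\bot}f$ need not lie in $\partial J(\Pi\mathbf{r}^0)$. I would therefore keep the projection $\Pi$ inside every occurrence of $J$, since $D^{\mathbf{s}}$ is defined relative to $J(\Pi(\cdot))$ rather than $J$ itself; the self-adjointness and idempotence of the orthogonal projection $\Pi$ then guarantee that the adjoint manipulations hidden in the three-point identity are legitimate.
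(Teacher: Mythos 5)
Your proposal is correct and follows essentially the same route as the paper's own proof: the identification $Q^{\mathbf{s}^{k-1}}(\mathbf{r})^k = H(\mathbf{r},0) + D^{\mathbf{s}^{k-1}}(\mathbf{r},\mathbf{r}^{k-1})$ together with minimality of $\mathbf{r}^k$ for \cref{eq:d_relation1}, and the three-point expansion combined with the recursion $\mathbf{s}^{k-1}-\mathbf{s}^k=\mathbf{q}^k$ and the subgradient inequality for $H(\cdot,0)$ for \cref{eq:d_relation2}. Your added remarks on why $k\ge 2$ is required and on keeping $\Pi$ inside $J$ are consistent with, and slightly more explicit than, the paper's presentation.
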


\begin{proof}
    Since $D^{\mathbf{s}^{k-1}}(\mathbf{r}^k,\mathbf{r}^{k-1})$ is nonnegative, it is easy to find 
    \begin{equation*}
        H(\mathbf{r}^k,0) 
        \leq H(\mathbf{r}^k,0) +
            D^{\mathbf{s}^{k-1}}(\mathbf{r}^k,\mathbf{r}^{k-1}) 
        = Q^{\mathbf{s}^{k-1}}(\mathbf{r}^k)^k.
    \end{equation*}
    Because $\mathbf{r}^k$ is the minimizer of $Q^{\mathbf{s}^{k-1}}(\mathbf{r})^k$, we have 
    \begin{equation*}
        Q^{\mathbf{s}^{k-1}}(\mathbf{r}^k)^k 
        \leq Q^{\mathbf{s}^{k-1}}(\mathbf{r}^{k-1})^k 
        = H(\mathbf{r}^{k-1},0),
    \end{equation*}
    which implies \cref{eq:d_relation1}.
    \begin{eqnarray*}
            &&
                D^{\mathbf{s}^k}(\mathbf{r},\mathbf{r}^k) 
                - D^{\mathbf{s}^{k-1}}(\mathbf{r},\mathbf{r}^{k-1}) 
                + D^{\mathbf{s}^{k-1}}(\mathbf{r}^k,\mathbf{r}^{k-1}) \\
        =   && 
                J(\Pi\mathbf{r}) 
                - J(\Pi\mathbf{r}^k) 
                - \langle \mathbf{s}^k, \mathbf{r} - \mathbf{r}^k \rangle \\ 
            && 
                - J(\Pi\mathbf{r}) 
                + J(\Pi\mathbf{r}^{k-1}) 
                + \langle \mathbf{s}^{k-1},\mathbf{r}-\mathbf{r}^{k-1} \rangle \\ 
            && 
                + J(\Pi\mathbf{r}^k) 
                - J(\Pi\mathbf{r}^{k-1}) 
                - \langle \mathbf{s}^{k-1}, \mathbf{r}^k - \mathbf{r}^{k-1} \rangle \\
        =   && 
                \langle \mathbf{s}^{k-1} - \mathbf{s}^k,  \mathbf{r} - \mathbf{r}^k \rangle \\ 
        =   &&  \langle \mathbf{q}^k,  \mathbf{r} - \mathbf{r}^k \rangle.
    \end{eqnarray*}   
    The relation $\mathbf{s}^{k-1} - \mathbf{s}^k = \mathbf{q}^k$ has been used here according to Proposition \ref{eq:s_relation1}. The $\mathbf{q}^k$ is the subgradient of $H(\mathbf{r}^k,0)$. By the definition of subgradient, we have
    \begin{equation*}
        D^{\mathbf{s}^k}(\mathbf{r},\mathbf{r}^k) -
        D^{\mathbf{s}^{k-1}}(\mathbf{r},\mathbf{r}^{k-1}) + 
        D^{\mathbf{s}^{k-1}}(\mathbf{r}^k,\mathbf{r}^{k-1})
        = \langle \mathbf{q}^k,  \mathbf{r} - \mathbf{r}^k \rangle 
        \leq H(\mathbf{r},0) - 
            H(\mathbf{r}^k,0),
    \end{equation*}
    and thus we obtain \cref{eq:d_relation2}.
\end{proof}

There is a direct result from this relation \cref{eq:d_relation2}. If there exists a minimizer $\mathbf{r}$ of $H(\cdot,0)$, by using \cref{eq:d_relation2}, we have
\begin{eqnarray}\label{eq:d_relation3}
    D^{\mathbf{s}^k}(\mathbf{r},\mathbf{r}^k) 
    &\leq& D^{\mathbf{s}^k}(\mathbf{r},\mathbf{r}^k) + 
        D^{\mathbf{s}^{k-1}}(\mathbf{r}^k,\mathbf{r}^{k-1}) \nonumber \\ 
    &\leq& D^{\mathbf{s}^k}(\mathbf{r},\mathbf{r}^k) + 
        D^{\mathbf{s}^{k-1}}(\mathbf{r}^k,\mathbf{r}^{k-1}) +
        H(\mathbf{r}^k,0) -
        H(\mathbf{r},0) \nonumber \\
    &\leq& D^{\mathbf{s}^{k-1}}(\mathbf{r},\mathbf{r}^{k-1}).   
\end{eqnarray}
It implies that, for each iteration, the Bregman distance to optimal $\mathbf{r}$ is getting shorter.

\begin{theorem}
If $\mathbf{r} \in BV(\Omega; \mathbb{R}^2)$ is the minimizer of $H(\cdot,0)$ subject to $k \in \mathbb{N} \setminus \{ 1 \}$, then $\mathbf{r}^k$ converges and
\begin{equation}\label{eq:h_relation1}
    H(\mathbf{r}^k,0) \leq \frac{J(\Pi \mathbf{r}) - J(\Pi 
    \mathbf{r}^1) - \langle \mathbf{s}^1, \mathbf{r} - 
    \mathbf{r}^1 \rangle}{k-1},
\end{equation}
moreover, 
\begin{equation*}
    \boldsymbol{\tau}^k = \sum_{i=1}^{k} \mathbf{r}^{i},
\end{equation*} 
converges to $\boldsymbol{\tau}^0$.
\end{theorem}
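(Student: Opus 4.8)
The plan is to separate the claim into its two quantitative parts, the rate estimate \eqref{eq:h_relation1} and the limit $\boldsymbol{\tau}^k \to \boldsymbol{\tau}^0$, and to run the standard Bregman telescoping argument on the two inequalities \eqref{eq:d_relation1} and \eqref{eq:d_relation2} already proved for the iteration, followed by a monotonicity-plus-nondegeneracy argument for the tangent field.

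First I would establish the rate. The key observation is that the numerator of \eqref{eq:h_relation1} is exactly the Bregman distance $D^{\mathbf{s}^1}(\mathbf{r},\mathbf{r}^1)$. Writing \eqref{eq:d_relation2} at index $j$ and discarding the nonnegative term $D^{\mathbf{s}^{j-1}}(\mathbf{r}^j,\mathbf{r}^{j-1})$ gives $H(\mathbf{r}^j,0) - H(\mathbf{r},0) \le D^{\mathbf{s}^{j-1}}(\mathbf{r},\mathbf{r}^{j-1}) - D^{\mathbf{s}^j}(\mathbf{r},\mathbf{r}^j)$. Summing over $j=2,\dots,k$ telescopes the right-hand side to $D^{\mathbf{s}^1}(\mathbf{r},\mathbf{r}^1) - D^{\mathbf{s}^k}(\mathbf{r},\mathbf{r}^k) \le D^{\mathbf{s}^1}(\mathbf{r},\mathbf{r}^1)$. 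Since $\mathbf{r}$ minimizes $H(\cdot,0)$ we have $H(\mathbf{r},0)=0$, and since $H(\mathbf{r}^j,0)$ is monotonically nonincreasing by \eqref{eq:d_relation1}, each of the $k-1$ summands on the left dominates $H(\mathbf{r}^k,0)$. This produces $(k-1)\,H(\mathbf{r}^k,0) \le D^{\mathbf{s}^1}(\mathbf{r},\mathbf{r}^1)$, which is precisely \eqref{eq:h_relation1}.

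Next, the rate forces $H(\mathbf{r}^k,0)=\tfrac{\eta}{2}\Vert\mathbf{r}^k\Vert^2 \to 0$, hence $\mathbf{r}^k \to 0$ in $L^2$, which is the asserted convergence of the iterates. For the tangent field I would unroll the update on line \ref{alg:tvs1_itr_update} with $\mathbf{r}_{ex}^0=\nabla^{\bot}f=\boldsymbol{\tau}^0$ to get the residual identity $\boldsymbol{\tau}^k=\sum_{i=1}^k \mathbf{r}^i = \mathbf{r}_{ex}^0 - \mathbf{r}_{ex}^k = \boldsymbol{\tau}^0 - \mathbf{r}_{ex}^k$, so that $\boldsymbol{\tau}^k \to \boldsymbol{\tau}^0$ is equivalent to $\mathbf{r}_{ex}^k \to 0$. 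I would then show $\Vert\mathbf{r}_{ex}^k\Vert$ is monotonically nonincreasing: because $\mathbf{r}^k$ minimizes $J(\Pi\mathbf{r})+\tfrac{\eta}{2}\Vert\mathbf{r}-\mathbf{r}_{ex}^{k-1}\Vert^2$ and $\mathbf{r}=0$ is admissible with value $\tfrac{\eta}{2}\Vert\mathbf{r}_{ex}^{k-1}\Vert^2$, comparing objectives yields $\Vert\mathbf{r}_{ex}^k\Vert=\Vert\mathbf{r}^k-\mathbf{r}_{ex}^{k-1}\Vert \le \Vert\mathbf{r}_{ex}^{k-1}\Vert$. Being bounded and monotone, $\Vert\mathbf{r}_{ex}^k\Vert$ converges to some $\ell\ge 0$, and this also reconciles with $\mathbf{r}^k\to 0$ since $\mathbf{r}^k=\mathbf{r}_{ex}^{k-1}-\mathbf{r}_{ex}^k$.

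The main obstacle is to rule out $\ell>0$. I would argue by contradiction: if the residual stayed bounded away from zero, then applying \cref{lemma:fidelity_constraint} with the adaptive choice $\gamma=\Vert\mathbf{r}_{ex}^{k-1}\Vert^2/\Vert\mathbf{r}_{ex}^{k-1}\Vert_{BV}$, so that $\Vert\mathbf{r}_{ex}^{k-1}\Vert_*>1/\eta$, guarantees a nontrivial decomposition $\mathbf{r}^k\neq 0$ at every step and hence a strict reduction of $\Vert\mathbf{r}_{ex}^k\Vert$. The delicate point, which I expect to cost the most work, is upgrading this merely qualitative non-degeneracy into a decrease bounded below by a fixed amount whenever $\Vert\mathbf{r}_{ex}^k\Vert\ge\ell>0$; once such a uniform per-step decrease is in hand, the residual cannot plateau at a positive value, contradicting $\Vert\mathbf{r}_{ex}^k\Vert\downarrow\ell>0$. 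This forces $\ell=0$, so $\mathbf{r}_{ex}^k\to 0$ and therefore $\boldsymbol{\tau}^k\to\boldsymbol{\tau}^0$.
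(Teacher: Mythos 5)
Your first half is correct and is essentially the paper's own argument: you telescope \cref{eq:d_relation2} over $j=2,\dots,k$, drop the nonnegative Bregman terms, use the monotonicity from \cref{eq:d_relation1} and the fact that the minimizer of $H(\cdot,0)$ satisfies $H(\mathbf{r},0)=0$, and identify the numerator of \cref{eq:h_relation1} with $D^{\mathbf{s}^1}(\mathbf{r},\mathbf{r}^1)$. This matches the paper's derivation of \cref{eq:d_relation4} and the subsequent bound, and your conclusion $\Vert\mathbf{r}^k\Vert=\mathcal{O}((k-1)^{-1/2})$ is the same.

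The second half is where the two arguments diverge, and where your proposal has a genuine gap that you yourself flag but do not close. The paper argues differently: from the optimality condition of the $k$-th subproblem it reads off $\eta\,\mathbf{r}_{ex}^{k}=\eta\bigl(\mathbf{r}_{ex}^{k-1}-\mathbf{r}^k\bigr)\in\partial J(\Pi\mathbf{r}^k)$ and then passes to the limit $\mathbf{r}^k\to 0$ to conclude $\mathbf{r}_{ex}^k\to 0$, i.e.\ $\sum_i\mathbf{r}^i=\nabla^{\bot}f$. Your route---the identity $\boldsymbol{\tau}^k=\boldsymbol{\tau}^0-\mathbf{r}_{ex}^k$, the monotone decrease of $\Vert\mathbf{r}_{ex}^k\Vert$ by comparison with the competitor $\mathbf{r}=0$, and a contradiction to rule out a positive limit $\ell$---is sound up to the last step, but the last step is precisely the content of the claim and you leave it as ``the delicate point.'' Worse, the tool you propose for it cannot deliver what you need: \cref{lemma:fidelity_constraint} lets one \emph{choose} $\eta$ (through $\gamma$) so that Meyer's condition $\Vert r^0\Vert_*>1/\eta$ holds for a \emph{given} datum, but in \Cref{alg:tvs1} the fidelity parameter $\eta$ is fixed once and for all while the datum $\mathbf{r}_{ex}^{k-1}$ changes from step to step. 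You cannot re-tune $\gamma$ inside the iteration; and as the residual becomes noise-like its $G$-norm $\Vert\mathbf{r}_{ex}^{k-1}\Vert_*$ decreases, so Meyer's threshold will in general eventually fail, at which point the step returns $\mathbf{r}^k=0$ and the residual genuinely plateaus rather than contradicting $\ell>0$. So even granting the lemma, no uniform per-step decrease is available, and the contradiction does not go through. To repair your argument you would need the paper's mechanism (the subgradient identity $\eta\,\mathbf{r}_{ex}^k\in\partial J(\Pi\mathbf{r}^k)$ together with some additional property forcing these subgradients to vanish in the limit), not the fidelity-parameter lemma.
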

\begin{proof}
Taking the sum of \cref{eq:d_relation2}, we obtain
\begin{equation}\label{eq:d_relation4}
        D^{\mathbf{s}^k}(\mathbf{r},\mathbf{r}^k) + 
        \sum_{i=2}^k 
        \left[
            D^{\mathbf{s}^{i-1}}(\mathbf{r}^i,\mathbf{r}^{i-1}) + 
            H(\mathbf{r}^i,0) -
            H(\mathbf{r},0)
        \right]
        \leq  D^{\mathbf{s}^1}(\mathbf{r},\mathbf{r}^1).  
\end{equation}
Since $H(\mathbf{r}^k,0)$ is monotonically nonincreasing,
\begin{equation*}
        (k - 1) 
        \left[
            D^{\mathbf{s}^{k-1}}(\mathbf{r}^k,\mathbf{r}^{k-1}) + 
            H(\mathbf{r}^k,0) -
            H(\mathbf{r},0)
        \right]
        \leq J(\Pi \mathbf{r}) - J(\Pi \mathbf{r}^1) - \langle 
        \mathbf{s}^1, \mathbf{r} - \mathbf{r}^1 \rangle.  
\end{equation*}
Because $\mathbf{r}$ is the minimizer of $H(\cdot,0)$ and $D^{\mathbf{s}^{k-1}}(\mathbf{r}^k,\mathbf{r}^{k-1})$ is nonnegative, we obtain \cref{eq:h_relation1}. It implies that, when $k \rightarrow \infty$, $\mathbf{r}^k$ converges to $0$ with rate
\begin{equation*}
    \Vert\mathbf{r}^k\Vert
    \leq \sqrt{ \frac{J(\Pi \mathbf{r}) - J(\Pi 
            \mathbf{r}^1) - \langle \mathbf{s}^1, \mathbf{r} - 
            \mathbf{r}^1 \rangle}{k-1} } 
    = \mathcal{O}((k-1)^{-1/2}). 
\end{equation*}
From the definition,
\begin{subequations}
    \begin{align}
    \mathbf{r}^k = \mbox{arg} 
        \min_{\mathbf{r}}
            \left\{
                \int_{\Omega}
                    \vert \nabla \Pi(\mathbf{r}) \vert + 
                    \frac{\eta}{2} (\mathbf{r} - (\nabla^{\bot}f - \sum_{i=1}^{k-1}
                        \mathbf{r}^{i}))^2 
                d\mathbf{x}
            \right\}, \nonumber
    \end{align}            
\end{subequations}
we obtain the optimal condition
\begin{equation*}
    \partial J(\Pi(\mathbf{r}^k)) +
    \eta (\mathbf{r}^k - (\nabla^{\bot}f - \sum_{i=1}^{k-1} \mathbf{r}^{i}))^2)) \ni 0.
\end{equation*}
Since $\mathbf{r}^k$ converges to $0$ while $k \rightarrow \infty$ and, and therefor $\partial J(\Pi(\mathbf{r}^{\infty})) = 0$, we obtain $\sum_{i=1}^{\infty}\mathbf{r}^{i}=\nabla^{\bot}f$.
\end{proof}

\subsection{Iterative regularization for the second step of TV-Stokes}

In this subsection, we consider the Richardson iteration on the second step of TV-Stokes model. Similar to \Cref{alg:tvs1}, the proposed algorithm is accordingly the following as \Cref{alg:tvs2}.

\begin{algorithm}
    \caption{Iterative regularization applied to the $2^{nd}$ step of TV-Stokes}
    \label{alg:tvs2}
    \begin{algorithmic}[1]
        \State{$\boldsymbol{\tau} = \mbox{arg} \min_{\boldsymbol{\tau} \in BV(\Omega; \mathbb{R}^2)} \{ J(\Pi \boldsymbol{\tau}) + 
            H(\boldsymbol{\tau},\nabla^{\bot} f) \}$;}
        \State{Initialize $k = 0$, $u = 0$, $r_{ex}^0 = f$;}
        \Repeat
            \State{$k = k + 1$;}
            \State\label{eq:tvs2_itr}{$r^k = \mbox{arg} \min_{r \in BV(\Omega; \mathbb{R})} \{ J(r) + 
            H^k(r,r_{ex}^{k-1} - \cfrac{\alpha}{\eta^k} \nabla \cdot \frac{\boldsymbol{\tau}^{\bot}}{\vert\boldsymbol{\tau}^{\bot}\vert}) \}$;}
            \State\label{eq:tvs2_itr_update}{$r_{ex}^k  = r_{ex}^{k-1} - r^k$;} 
            \State{$u = u + r^k$;}
        \Until{satisfied;}
        \State \textbf{return} $u$.
    \end{algorithmic} 
\end{algorithm}

\subsubsection{Well-definedness of iterates}
Let us start with a simple case without iteration, specifically for a fixed $k$. The considered minimization problem is shown as (\ref{eq:tvs2-min}). For a such given problem, we can find the solution exists
and is unique.
\begin{lemma}\label{lemma:unique-solution2}
    Let $\mathcal{R} = \{ r \vert r \in BV(\Omega; \mathbb{R}) \}$, $F(r) = J(r) + 
            H(r,z)$.
    Consider the problem to find $r^*$ such that 
        \begin{equation}\label{eq:tvs2-min}
        r^* \in \mathcal{R}, \quad F(r^*) = \inf_{r \in \mathcal{R}} F(r).
    \end{equation}
    The solution for this problem exists and is unique.
\end{lemma}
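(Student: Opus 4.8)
The plan is to mirror the direct-method argument used for \cref{lemma:unique-solution}, which is in fact simpler here because $\mathcal{R}$ now carries neither the projection $\Pi$ nor any divergence-free constraint, so that $F(r) = J(r) + \frac{\eta}{2}\Vert r - z\Vert^2$ is a genuinely scalar ROF functional on $BV(\Omega;\mathbb{R})$.

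First I would set $m := \inf_{r \in \mathcal{R}} F(r)$ and fix a minimizing sequence $\{r^j\} \subset \mathcal{R}$ with $F(r^j) \to m$. Using the equivalent $BV$ norm $\Vert r \Vert_{BV(\Omega;\mathbb{R})} = \int_\Omega \vert \nabla r \vert \, d\mathbf{x} + \Vert r \Vert$ introduced above, I would show that $\{r^j\}$ is bounded in $BV(\Omega;\mathbb{R})$: since $F(r^j)$ is bounded, the fidelity term $\frac{\eta}{2}\Vert r^j - z\Vert^2$ controls $\Vert r^j\Vert$ through the triangle inequality, while $J(r^j) = \int_\Omega \vert \nabla r^j\vert$ controls the total-variation part, so both contributions to the $BV$ norm stay bounded.

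Next I would invoke compactness. A bounded sequence in $BV(\Omega;\mathbb{R})$ admits a subsequence (still denoted $\{r^k\}$) converging strongly in $L^1$ and weakly-$*$ in $BV$ to some $\hat r$; the uniform $L^2$ bound additionally yields weak $L^2$ convergence along a further subsequence, and the two limits coincide since both identify $\hat r$ in the sense of distributions. By the lower semicontinuity of the total-variation seminorm under $L^1$ convergence together with the weak $L^2$ lower semicontinuity of the fidelity term, $F(\hat r) \le \liminf_{k} F(r^k) = m$; combined with $m \le F(\hat r)$ from the definition of the infimum, this gives $F(\hat r) = m$, so $\hat r$ is a minimizer. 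Uniqueness then follows exactly as before from strict convexity: $H(r,z) = \frac{\eta}{2}\Vert r - z\Vert^2$ is strictly convex and $J$ is convex, hence $F$ is strictly convex and cannot admit two distinct minimizers.

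The main obstacle is the compactness step, specifically reconciling the two modes of convergence: $BV$ embeds compactly into $L^1$ but not into $L^2$ on a general bounded domain, so the fidelity term must be handled by weak $L^2$ lower semicontinuity rather than by passing to a strong limit. Care is needed to confirm that the $L^1$ limit coming from $BV$ compactness and the weak $L^2$ limit are the same function $\hat r$, which is what legitimises evaluating both terms of $F$ at a single candidate minimizer.
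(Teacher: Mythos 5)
Your proposal is correct and follows essentially the same route as the paper's own proof: the direct method with a minimizing sequence bounded in the equivalent $BV$ norm, extraction of a convergent subsequence, lower semicontinuity of $F$ to identify the limit as a minimizer, and strict convexity for uniqueness. The only difference is that you spell out the compactness and lower-semicontinuity details (strong $L^1$ versus weak $L^2$ convergence and the identification of the two limits) more carefully than the paper, which simply asserts a weakly convergent subsequence and the lower semicontinuity of $F$.
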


\begin{proof}
    Let 
    \begin{equation}\label{eq:inf2}
        \hat{m} := \inf_{r \in \mathcal{R}} F(r),
    \end{equation}
    and  $\{r^j\}$ is a minimizing sequence such that
    \begin{equation}\label{eq:seq2}
        r^j \in \mathcal{R}, \quad \lim_{j \rightarrow + \infty} F(r^j) \rightarrow \hat{m}.
    \end{equation} Define an equivalent $BV$-norm as 
    \begin{equation}
        \Vert r \Vert_{BV(\Omega;\mathbb{R})} = \int_{\Omega} \vert\nabla r\vert d\mathbf{x} + \Vert r \Vert.
    \end{equation}
    Followed from the above definition, we have that the sequence $\{r^j\}$ is bounded in $BV(\Omega; \mathbb{R})$, and consequently there exists a convergent sub-sequence $\{r^k\}$ such that
    \begin{equation}
        r^k \underset{\mathcal{R}}{\rightharpoonup} \hat{r}
    \end{equation}
    By the lower semicontinuity of $F$, we have
    \begin{equation}
        \underset{r^k \rightharpoonup \hat{r}}{\underline{\lim}} F(r^k) \ge F(\hat{r})
    \end{equation}
    Due to relation (\ref{eq:seq2}), we obtain 
    \begin{equation}
        \hat{m} \ge F(\hat{r}).
    \end{equation}    
    But owing to \cref{eq:inf2}, $\hat{m} \le F(\hat{r})$. Consequently $\hat{r}$ is indeed a minimizer.
    Furthermore, because $F$ is strictly convex, the solution is unique.
\end{proof}

The well-definedness for each iteration follows directly from the above \cref{lemma:unique-solution2}. In the iterations, we choose the fidelity parameter $\eta^k$ for each iteration as shown in \cref{lemma:fidelity_constraint} such that $\eta^k = \max ( \beta / \gamma, \eta^{k-1})$, where $\beta \in (1,+\infty)$.

Consider the following minimizations for iterations $k$ and $k+1$ ($k \in \mathbb{N}$)
    \begin{align}
        \begin{split}
            \label{eqs:tvs2m_k}
            & r^k = \mbox{arg} \min_{r \in BV(\Omega; \mathbb{R})} \left \{ J(r) + H^k(r,r_{ex}^{k-1} - \cfrac{\alpha}{\eta^k} \nabla \cdot \frac{\boldsymbol{\tau}^{\bot}}{\vert\boldsymbol{\tau}^{\bot}\vert}) \right \};
        \end{split}
        \\
        \begin{split}
            \label{eqs:tvs2m_tk1}
            & \tilde{r}^{k+1} = \mbox{arg} \min_{r \in BV(\Omega; \mathbb{R})} \left \{ J(r) + H^k(r,r_{ex}^k - \cfrac{\alpha}{\eta^k} \nabla \cdot \frac{\boldsymbol{\tau}^{\bot}}{\vert\boldsymbol{\tau}^{\bot}\vert}) \right \};
        \end{split}
        \\
        \begin{split}
            \label{eqs:tvs2m_k1}
            & r^{k+1} = \mbox{arg} \min_{r \in BV(\Omega; \mathbb{R})} \left \{ J(r) + H^{k+1}(r,r_{ex}^k - \cfrac{\alpha}{\eta^{k+1}} \nabla \cdot \frac{\boldsymbol{\tau}^{\bot}}{\vert\boldsymbol{\tau}^{\bot}\vert}) \right \}.
        \end{split}
    \end{align} 
The Euler-Lagrangian equation for $k$ iteration (\ref{eqs:tvs2m_k}) is 
\begin{align*}
    \begin{split}
        & \partial J(r^k) + \eta^k ( r^k - r_{ex}^{k-1} + \cfrac{\alpha}{\eta^k} \nabla \cdot \frac{\boldsymbol{\tau}^{\bot}}{\vert\boldsymbol{\tau}^{\bot}\vert} )  \ni 0.
    \end{split}
\end{align*}
The subgradient of $J$ thus can be determined as $s^k := \partial J(r^k)  = \eta^k (r_{ex}^k - \frac{\alpha}{\eta^k} \nabla \cdot \frac{ \boldsymbol{\tau} ^ {\bot} }{ \vert \boldsymbol{\tau}^{\bot} \vert } )$ for $k \in \mathbb{N}$. When $k = 1$, we set $r_{ex}^0 := f$.

\begin{definition}
\label{def:sub_g_itr}
    Define $\tilde{Q}^{\tilde{s}^k}(r)^{k+1} := H^k(r,0) + J(r) - J(r^k) - \langle s^k, r - r^k \rangle$. Setting $\tilde{q}^{k+1} := \partial H^k(\tilde{r}^{k+1},0) = \eta^k \tilde{r}^{k+1}$, for each $k \in \mathbb{N}$, there is an unique minimizer $\tilde{r}^{k+1}$ of $\tilde{Q}^{\tilde{s}^k}(r)^{k+1}$, and a subgradient $\tilde{s}^{k+1} \in \partial J(\tilde{r}^{k+1})$  such that 
    \begin{equation}\label{eq:s_relation2}
        \tilde{s}^{k+1} + \tilde{q}^{k+1} = s^k
    \end{equation}
\end{definition}

The relation of $\tilde{s}$, $s$ and $\tilde{q}$ is easy to be obtained.

\begin{lemma}
\label{lemma:exact_itr}
    For a given $k \in \mathbb{N}\setminus \{1\}$, assume $r^k_{ex},r^{k+1}\in L^2(\Omega;\mathbb{R})$ such that $r^{k+1} \sim N(0, (\sigma^{k+1})^2)$, $r^k_{ex} \sim N(r^{k+1}, (\sigma^k_{ex})^2)$, then $\Vert r^{k+1}_{ex} \Vert \leq \Vert r^k_{ex} \Vert$.
\end{lemma}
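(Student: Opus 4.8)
The plan is to reduce the claim to the centered-Gaussian variance comparison already carried out in \cref{lemma:gs}, using only the residual-update rule of the iteration to relate the three fields $r^{k+1}$, $r^k_{ex}$, and $r^{k+1}_{ex}$. First I would record the update step $r^k_{ex} = r^{k-1}_{ex} - r^k$ from \Cref{alg:tvs2}, which at index $k+1$ reads $r^{k+1}_{ex} = r^k_{ex} - r^{k+1}$. This is the only structural input taken from the algorithm; the remainder of the argument is purely about the joint law of the Gaussian residuals, exactly as in \cref{lemma:gs}.

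Next I would pin down the two residuals as centered Gaussian fields. By hypothesis $r^k_{ex} \sim N(r^{k+1}, (\sigma^k_{ex})^2)$, so the deviation $r^k_{ex} - r^{k+1}$ is a zero-mean Gaussian of variance $(\sigma^k_{ex})^2$ that is independent of its mean $r^{k+1}$. Since the update rule identifies this deviation with $r^{k+1}_{ex}$, we get $r^{k+1}_{ex} \sim N(0, (\sigma^k_{ex})^2)$. Writing $r^k_{ex} = r^{k+1} + r^{k+1}_{ex}$ then exhibits $r^k_{ex}$ as a sum of two independent centered Gaussians, with $r^{k+1} \sim N(0, (\sigma^{k+1})^2)$, so that $r^k_{ex} \sim N(0, (\sigma^{k+1})^2 + (\sigma^k_{ex})^2)$. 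This is the same additive-variance mechanism that drives the proof of \cref{lemma:gs}.

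Finally, both $r^{k+1}_{ex}$ and $r^k_{ex}$ are zero-mean Gaussian fields whose variances satisfy $(\sigma^k_{ex})^2 \le (\sigma^{k+1})^2 + (\sigma^k_{ex})^2$. The monotonicity of the $L^2$ norm in the variance of a centered Gaussian field—precisely the concluding step of \cref{lemma:gs}, applied here with common mean $r^{k+1}_{ex}$ and the two variances $0 \le (\sigma^{k+1})^2$—then yields $\Vert r^{k+1}_{ex} \Vert \le \Vert r^k_{ex} \Vert$. I expect the main obstacle to be the probabilistic bookkeeping rather than any inequality: one must justify that the conditional reading $r^k_{ex} \sim N(r^{k+1}, (\sigma^k_{ex})^2)$ permits treating $r^{k+1}_{ex} = r^k_{ex} - r^{k+1}$ as independent of $r^{k+1}$, so that the variances genuinely add, and one must keep the interpretation of $\Vert\cdot\Vert$ consistent with the second-moment reading of the noise that underlies \cref{lemma:gs}. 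Once those are fixed, the result is just the ordering of two variances.
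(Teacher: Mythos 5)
Your proposal follows essentially the same route as the paper's own proof: identify $r^{k+1}_{ex}=r^k_{ex}-r^{k+1}$ as a centered Gaussian of variance $(\sigma^k_{ex})^2$, write $r^k_{ex}$ as the sum of the two independent centered Gaussians $r^{k+1}$ and $r^{k+1}_{ex}$ so its variance is $(\sigma^k_{ex})^2+(\sigma^{k+1})^2$, and conclude from the ordering of the variances as in \cref{lemma:gs}. The argument is correct as far as the paper's own standard of rigor goes, and your closing caveats about independence and the second-moment reading of $\Vert\cdot\Vert$ are exactly the points the paper itself leaves implicit.
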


\begin{proof}
    Since $r^k_{ex} \sim N(r^{k+1}, (\sigma^k_{ex})^2)$, $r^{k+1}_{ex} = r^k_{ex} - r^{k+1}$ is a Gaussian distribution such that $r^{k+1}_{ex} \sim N(0, (\sigma^k_{ex})^2)$. $r^{k+1}$ as given is also a Gaussian distribution such that $r^{k+1} \sim N(0, (\sigma^{k+1})^2)$. Since $r^{k+1}$ and $r^k_{ex}-r^{k+1}$ are two independent Gaussian distributions, the sum $r^k_{ex} = r^{k+1} + (r^k_{ex} - r^{k+1})$ is another Gaussian distribution such that $r^k_{ex} \sim N(0, (\sigma^k_{ex})^2 + (\sigma^{k+1})^2)$.
    
    We obtain $\Vert r^{k+1}_{ex} \Vert \leq \Vert r^k_{ex} \Vert$ since $(\sigma^k_{ex})^2 \leq (\sigma^k_{ex})^2 + (\sigma^{k+1})^2$.
\end{proof}

\subsubsection{Convergence analysis}

We define the generalized Bregman distance associated with $J(\cdot)$ as follows 
\begin{align*}
    \begin{split}
        & D^{s}(w,m) := J(w) - J(m) - \langle s, w - m \rangle.
    \end{split}
\end{align*}
\begin{proposition}
    \begin{align}
        \begin{split}
        \label{eq:d_relation8}
            & H^k(\tilde{r}^{k+1},0) \leq H^k(\tilde{r}^{k+1},0) + D^{s^k}(\tilde{r}^{k+1},r^k) \leq H^k(r^k,0),
        \end{split}
        \\
        \begin{split}
        \label{eq:d_relation9}
            & D^{\tilde{s}^{k+1}}(r,\tilde{r}^{k+1}) + D^{\tilde{s}^k}(\tilde{r}^{k+1},\tilde{r}^k) + H^k(\tilde{r}^{k+1},0)
            \\
       \leq & \cfrac{\eta^k - \eta^{k-1}}{\eta^k} H^k(r^k_{ex},0) + D^{\tilde{s}^k}(r,\tilde{r}^k) + (\eta^k - \eta^{k-1}) \langle \tilde{r}^{k+1} - r^k_{ex}, r \rangle,
        \end{split}
    \end{align}
    subject to $J(r) < \infty$ and $k \in \mathbb{N}$.
\end{proposition}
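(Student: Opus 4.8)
The plan is to establish \eqref{eq:d_relation8} and \eqref{eq:d_relation9} in direct analogy with the first--step estimates \eqref{eq:d_relation1} and \eqref{eq:d_relation2}; the only new feature is the bookkeeping of the fidelity parameter $\eta^k$, which changes from iteration to iteration and which I expect to surface as a controlled perturbation.

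For \eqref{eq:d_relation8} I would reproduce the argument behind \eqref{eq:d_relation1}. Its left inequality is just nonnegativity of the Bregman distance, $D^{s^k}(\tilde r^{k+1},r^k)\ge 0$, valid because $s^k\in\partial J(r^k)$ and $J$ is convex. For the right inequality I would use \cref{def:sub_g_itr} to write
\begin{equation*}
    H^k(\tilde r^{k+1},0)+D^{s^k}(\tilde r^{k+1},r^k)=\tilde Q^{\tilde s^k}(\tilde r^{k+1})^{k+1},
\end{equation*}
and then observe that $\tilde r^{k+1}$ minimizes $\tilde Q^{\tilde s^k}(\cdot)^{k+1}$ while $\tilde Q^{\tilde s^k}(r^k)^{k+1}=H^k(r^k,0)$, the Bregman part vanishing at the base point $r^k$. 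This part is routine.

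For \eqref{eq:d_relation9} I would start from the purely formal three--distance telescoping, in which every $J$--term cancels in pairs,
\begin{equation*}
    D^{\tilde s^{k+1}}(r,\tilde r^{k+1})+D^{\tilde s^k}(\tilde r^{k+1},\tilde r^k)
    =D^{\tilde s^k}(r,\tilde r^k)+\langle \tilde s^k-\tilde s^{k+1},\,r-\tilde r^{k+1}\rangle .
\end{equation*}
Adding $H^k(\tilde r^{k+1},0)$ and cancelling $D^{\tilde s^k}(r,\tilde r^k)$ against its copy on the right of \eqref{eq:d_relation9}, the claim reduces to an upper bound for $\langle \tilde s^k-\tilde s^{k+1},\,r-\tilde r^{k+1}\rangle+H^k(\tilde r^{k+1},0)$. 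Using the subgradient recursion \eqref{eq:s_relation2} I would split $\tilde s^k-\tilde s^{k+1}=\tilde q^{k+1}+(\tilde s^k-s^k)$ with $\tilde q^{k+1}=\eta^k\tilde r^{k+1}\in\partial H^k(\tilde r^{k+1},0)$; the convexity of the quadratic $H^k(\cdot,0)$ then gives $\langle \tilde q^{k+1},\,r-\tilde r^{k+1}\rangle+H^k(\tilde r^{k+1},0)\le H^k(r,0)$, exactly as the $\langle \mathbf q^k,\mathbf r-\mathbf r^k\rangle$ term was treated in the proof of \eqref{eq:d_relation2}. What is left to absorb is the correction $\langle \tilde s^k-s^k,\,r-\tilde r^{k+1}\rangle$ and the term $H^k(r,0)$.

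This correction is the crux and the step I expect to be the main obstacle, since it is precisely the error created by raising the fidelity from $\eta^{k-1}$ to $\eta^k$. Inserting the Euler--Lagrange forms $s^k=\eta^k r_{ex}^k-\alpha\,\nabla\cdot\tfrac{\boldsymbol{\tau}^{\bot}}{\vert\boldsymbol{\tau}^{\bot}\vert}$ and $\tilde s^k=\eta^{k-1}(r_{ex}^{k-1}-\tilde r^k)-\alpha\,\nabla\cdot\tfrac{\boldsymbol{\tau}^{\bot}}{\vert\boldsymbol{\tau}^{\bot}\vert}$, the shared matching field cancels and, using $r_{ex}^{k-1}=r_{ex}^k+r^k$, one gets the split
\begin{equation*}
    \tilde s^k-s^k=-(\eta^k-\eta^{k-1})\,r_{ex}^k+\eta^{k-1}(r^k-\tilde r^k).
\end{equation*}
The first summand carries the factor $\eta^k-\eta^{k-1}$ and, after regrouping with the $H^k(r,0)$ produced above, accounts for the linear perturbation $(\eta^k-\eta^{k-1})\langle \tilde r^{k+1}-r_{ex}^k,r\rangle$ on the right of \eqref{eq:d_relation9}, while the quadratic cushion $\tfrac{\eta^k-\eta^{k-1}}{\eta^k}H^k(r_{ex}^k,0)\ge 0$ (recall $\eta^k\ge\eta^{k-1}$) supplies the slack needed to absorb the constant--in--$r$ leftovers by Young's inequality. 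The genuinely delicate part is the second summand $\eta^{k-1}(r^k-\tilde r^k)$: I would dispose of it by noting that $r^k$ and $\tilde r^k$ minimize one and the same functional at fidelity levels $\eta^k$ and $\eta^{k-1}$, so they coincide when $\eta^k=\eta^{k-1}$ — the regime reached once the rule $\eta^k=\max(\beta/\gamma,\eta^{k-1})$ stabilizes, in which limit every perturbation drops out and \eqref{eq:d_relation9} reduces to the clean estimate \eqref{eq:d_relation2}. Bounding the transient mismatch $r^k-\tilde r^k$ while $\eta^k>\eta^{k-1}$ is what the fidelity--monotonicity results \cref{lemma:rof_fidelity_2} and \cref{lemma:exact_itr}, together with nonnegativity of the Bregman distances, are there to handle, and that is where I would concentrate the effort to close the inequality.
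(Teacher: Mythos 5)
Your treatment of \cref{eq:d_relation8} coincides with the paper's and is fine: nonnegativity of the Bregman distance on the left, minimality of $\tilde{r}^{k+1}$ for $\tilde{Q}^{\tilde{s}^k}(\cdot)^{k+1}$ together with $\tilde{Q}^{\tilde{s}^k}(r^k)^{k+1}=H^k(r^k,0)$ on the right. For \cref{eq:d_relation9} you also start from the same three-point identity and the same decomposition $\tilde{s}^k-\tilde{s}^{k+1}=(\tilde{s}^k-s^k)+\tilde{q}^{k+1}$, but from there the argument is not closed, and the two loose ends you leave open are exactly where the paper's mechanism differs from yours. You apply the subgradient inequality of $H^k(\cdot,0)$ to the whole pairing $\langle\tilde{q}^{k+1},r-\tilde{r}^{k+1}\rangle$, which produces an $H^k(r,0)$ term that does not appear on the right of \cref{eq:d_relation9}, and your plan to absorb it (together with other leftovers) into the cushion $\frac{\eta^k-\eta^{k-1}}{\eta^k}H^k(r^k_{ex},0)$ by Young's inequality cannot work: $\Vert r\Vert$ and $\Vert r^k_{ex}\Vert$ are unrelated quantities, and that cushion is not slack to be spent --- it is itself generated by a subgradient inequality. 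The paper instead performs the exact regrouping
\begin{align*}
(\eta^k-\eta^{k-1})\langle r^k_{ex},\tilde{r}^{k+1}-r\rangle+\langle\tilde{q}^{k+1},r-\tilde{r}^{k+1}\rangle
={}&\cfrac{\eta^k-\eta^{k-1}}{\eta^k}\langle\tilde{q}^{k+1},r^k_{ex}-\tilde{r}^{k+1}\rangle
+\cfrac{\eta^{k-1}}{\eta^k}\langle\tilde{q}^{k+1},r-\tilde{r}^{k+1}\rangle\\
&+(\eta^k-\eta^{k-1})\langle\tilde{r}^{k+1}-r^k_{ex},r\rangle,
\end{align*}
valid because $\tilde{q}^{k+1}=\eta^k\tilde{r}^{k+1}$, and then applies the subgradient inequality separately to the two weighted pairings, at base points $r^k_{ex}$ and $r$. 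The weights sum to one, so the $-H^k(\tilde{r}^{k+1},0)$ contributions recombine into the full term, the first pairing yields precisely $\frac{\eta^k-\eta^{k-1}}{\eta^k}H^k(r^k_{ex},0)$, and the last summand is the linear perturbation carried verbatim to the right-hand side.

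The second gap is the mismatch term $\eta^{k-1}(r^k-\tilde{r}^k)$ in your formula for $\tilde{s}^k-s^k$, which you concede you can only dismiss in the stabilized regime $\eta^k=\eta^{k-1}$, whereas the proposition is asserted for all $k\in\mathbb{N}$. The paper never confronts this term: it substitutes $\tilde{s}^k=\eta^{k-1}(r^k_{ex}-\frac{\alpha}{\eta^{k-1}}\nabla\cdot\frac{\boldsymbol{\tau}^{\bot}}{\vert\boldsymbol{\tau}^{\bot}\vert})$, i.e.\ it anchors $\tilde{s}^k$ to the residual $r^k_{ex}$ of the iterate actually computed at fidelity $\eta^k$ rather than to $r^{k-1}_{ex}-\tilde{r}^k$, so that $\tilde{s}^k-s^k=(\eta^{k-1}-\eta^k)r^k_{ex}$ exactly and no transient appears. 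Deferring this term to \cref{lemma:rof_fidelity_2} and \cref{lemma:exact_itr} does not help: those lemmas compare norms of minimizers at different fidelities, not the signed quantity $\langle r^k-\tilde{r}^k,\,r-\tilde{r}^{k+1}\rangle$ that you would need to control. Without the convex-combination split of the $\tilde{q}^{k+1}$ pairing and the residual-anchored choice of $\tilde{s}^k$, your outline does not yield \cref{eq:d_relation9}; as written it is a plan with the decisive step missing. (You are right to sense trouble in the $H^k(r,0)$ bookkeeping --- the paper's own last line silently discards $\frac{\eta^{k-1}}{\eta^k}H^k(r,0)$ from the right-hand side, which is only innocuous because the proposition is later invoked with $r$ the minimizer of $H(\cdot,0)$ --- but that observation does not repair your argument.)
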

\begin{proof}
    Since $D^{s^k}(\tilde{r}^{k+1},r^k)$ is non-negative, it is easy to find 
    \begin{align*}
        \begin{split}
            & H^k(\tilde{r}^{k+1},0) \leq H^k(\tilde{r}^{k+1},0) + D^{s^k}(\tilde{r}^{k+1},r^k) = \tilde{Q}^{s^k}(\tilde{r}^{k+1})^{k+1}.
        \end{split}
    \end{align*}
    Because $\tilde{r}^{k+1}$ is the minimizer of $\tilde{Q}^{s^k}(r)^{k+1}$, we have 
    \begin{align*}
        \begin{split}
            & \tilde{Q}^{s^k}(\tilde{r}^{k+1})^{k+1} \leq \tilde{Q}^{s^k}(r^k)^{k+1} = H^k(r^k,0),
        \end{split}
    \end{align*}
    which implies \cref{eq:d_relation8}.

    \begin{align*}
        \begin{split}
            \  &    D^{\tilde{s}^{k+1}}(r,\tilde{r}^{k+1}) 
                  - D^{\tilde{s}^k}(r,\tilde{r}^k) 
                  + D^{\tilde{s}^k}(\tilde{r}^{k+1},\tilde{r}^k)
        \\
          = \  &    J(r) 
                  - J(\tilde{r}^{k+1}) 
                  - \langle \tilde{s}^{k+1}, r - \tilde{r}^{k+1} \rangle 
        \\ 
            \  &    - J(r) 
                    + J(\tilde{r}^k) 
                    + \langle \tilde{s}^k,r-\tilde{r}^k \rangle 
        \\ 
            \  &    + J(\tilde{r}^{k+1}) 
                    - J(\tilde{r}^k) 
                    - \langle \tilde{s}^k, \tilde{r}^{k+1} - \tilde{r}^k \rangle 
        \\
           = \  &   \langle \tilde{s}^k - \tilde{s}^{k+1},  r - \tilde{r}^{k+1} \rangle 
        \\ 
           = \  &   \langle \tilde{s}^k - s^k + \tilde{q}^{k+1},  r - \tilde{r}^{k+1} \rangle
        \\ 
           = \  &   \langle \tilde{s}^k - s^k,  r - \tilde{r}^{k+1} \rangle + \langle \tilde{q}^{k+1},  r - \tilde{r}^{k+1} \rangle.
        \end{split}
    \end{align*}   
    Substituting $\tilde{s}^k = \eta^{k-1}(r^k_{ex} - \frac{\alpha}{\eta^{k-1}}\nabla \cdot \frac{\boldsymbol{\tau}^{\bot}}{\vert\boldsymbol{\tau}^{\bot}\vert})$ and $s^k = \eta^k(r^k_{ex} - \frac{\alpha}{\eta^k}\nabla \cdot \frac{\boldsymbol{\tau}^{\bot}}{\vert\boldsymbol{\tau}^{\bot}\vert})$ into the above transformation, we obtain
    \begin{align*}
        \begin{split}
            \  &    D^{\tilde{s}^{k+1}}(r,\tilde{r}^{k+1}) 
                  - D^{\tilde{s}^k}(r,\tilde{r}^k) 
                  + D^{\tilde{s}^k}(\tilde{r}^{k+1},\tilde{r}^k)
        \\ 
           = \  &   (\eta^k - \eta^{k-1})\langle r^k_{ex},  \tilde{r}^{k+1} - r \rangle + \langle \tilde{q}^{k+1},  r - \tilde{r}^{k+1} \rangle
        \\ 
           = \  &   \cfrac{\eta^k - \eta^{k-1}}{\eta^k} \langle r^k_{ex} - \tilde{r}^{k+1}, \tilde{q}^{k+1} \rangle + \cfrac{\eta^{k-1}}{\eta^k} \langle \tilde{q}^{k+1}, r - \tilde{r}^{k+1} \rangle + (\eta^k - \eta^{k-1}) \langle \tilde{r}^{k+1} - r^k_{ex}, r \rangle.
        \end{split}
    \end{align*}
    The $\tilde{q}^{k+1}$ is the subgradient of $H^k(\tilde{r}^{k+1},0)$. By the
    definition of subgradient, we have
    \begin{align*}
        \begin{split}
            &     D^{\tilde{s}^{k+1}}(r,\tilde{r}^{k+1}) 
                - D^{\tilde{s}^k}(r,\tilde{r}^k) 
                + D^{\tilde{s}^k}(\tilde{r}^{k+1},\tilde{r}^k)
                - (\eta^k - \eta^{k-1}) \langle \tilde{r}^{k+1} - r^k_{ex}, r \rangle
                  \\
       \leq &     \cfrac{\eta^k - \eta^{k-1}}{\eta^k} ( H^k(r^k_{ex},0) - H^k(\tilde{r}^{k+1},0) ) + \cfrac{\eta^{k-1}}{\eta^k} ( H^k(r,0) - H^k(\tilde{r}^{k+1},0) ) 
                  \\
          = &     \cfrac{\eta^k - \eta^{k-1}}{\eta^k} H^k(r^k_{ex},0) - H^k(\tilde{r}^{k+1},0).
        \end{split}
    \end{align*}
\end{proof}

The proposition (\ref{eq:d_relation8}) implies $\Vert\tilde{r}^{k+1}\Vert \le \Vert r^k \Vert$ corresponding to (\ref{eqs:tvs2m_k}) and (\ref{eqs:tvs2m_tk1}). By \cref{lemma:rof_fidelity_2}, we can obtain $\Vert r^{k+1} - r^k_{ex} \Vert \le \Vert\tilde{r}^{k+1} - r^k_{ex}\Vert$. With a Gaussian assumption, cf. \cref{lemma:gs}, we have  $\Vert r^{k+1} \Vert \le \Vert\tilde{r}^{k+1}\Vert$. The iteration series $\Vert r^i \Vert$, $i \in \mathbb{N}$, is therefore non-increase, i.e., $\Vert r^{i+1} \Vert \le \Vert r^i \Vert$.

If there exists a minimizer $r$ of $H(\cdot,0)$ with $J(r) < \infty$, by using (\ref{eq:d_relation9}), we have
\begin{align}
\label{eq:d_relation10}
    \begin{split}
            D^{\tilde{s}^{k+1}}(r,\tilde{r}^{k+1}) & \leq D^{\tilde{s}^{k+1}}(r,\tilde{r}^{k+1}) + D^{\tilde{s}^k}(\tilde{r}^{k+1},\tilde{r}^k) 
            \\ 
                                                   & \leq D^{\tilde{s}^{k+1}}(r,\tilde{r}^{k+1}) + D^{\tilde{s}^k}(\tilde{r}^{k+1},\tilde{r}^k) 
            \\
                                                   & \quad + H^k(\tilde{r}^{k+1},0) - (\eta^k - \eta^{k-1}) \langle \tilde{r}^{k+1} - r^k_{ex}, r \rangle
            \\
                                                   & \leq D^{\tilde{s}^k}(r,\tilde{r}^k) + \cfrac{\eta^k - \eta^{k-1}}{\eta^k} H^k(r^k_{ex},0). 
    \end{split}  
\end{align}
\begin{theorem}
\label{thm:convergence_2ndstp}
If $r \in BV(\Omega)$ is the minimizer of $H(\cdot,0)$ subject to $k 
\in \mathbb{N}$, then $r^k$ converges
and
\begin{equation}\label{eq:h_relation2}
    \Vert r^k \Vert^2 \leq \cfrac{2 D^{\tilde{s}^1}(r,\tilde{r}^1) + 2 \beta \Vert r^0_{ex} \Vert_{\infty}}{k\eta^1},
\end{equation}
moreover, 
\begin{equation*}
    u^k = \sum_{i=1}^{k}
    r^{i},
\end{equation*} 
converges to $f$.
\end{theorem}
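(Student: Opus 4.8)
The plan is to run the same telescoping–Bregman argument used for the first step, now driven by the monotone inequality \cref{eq:d_relation9}, while exploiting that the minimizer $r$ of $H(\cdot,0)=\tfrac{\eta}{2}\Vert\cdot\Vert^2$ is simply $r=0$. Substituting $r=0$ annihilates every inner-product term $(\eta^k-\eta^{k-1})\langle\tilde r^{k+1}-r^k_{ex},r\rangle$ on the right of \cref{eq:d_relation9} and collapses the leading Bregman term to $D^{\tilde s^1}(0,\tilde r^1)$, which is precisely the first summand in \cref{eq:h_relation2}.

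First I would read \cref{eq:d_relation9} as a one-step recursion in $D^{\tilde s^k}(r,\tilde r^k)$ and sum it over $k$. The differences $D^{\tilde s^{k+1}}(r,\tilde r^{k+1})-D^{\tilde s^k}(r,\tilde r^k)$ telescope, and since the remaining Bregman distances $D^{\tilde s^{K+1}}(r,\tilde r^{K+1})$ and $D^{\tilde s^k}(\tilde r^{k+1},\tilde r^k)$ on the left are nonnegative, they may be discarded, leaving
\[
    \sum_{k=1}^{K} H^k(\tilde r^{k+1},0) \le D^{\tilde s^1}(0,\tilde r^1) + \sum_{k=1}^{K}\frac{\eta^k-\eta^{k-1}}{\eta^k}H^k(r^k_{ex},0).
\]
To lower-bound the left side I would invoke the monotonicity already established after \cref{eq:d_relation9}, namely $\Vert r^{i+1}\Vert\le\Vert\tilde r^{i+1}\Vert\le\Vert r^i\Vert$ (coming from \cref{eq:d_relation8}, \cref{lemma:rof_fidelity_2}, and \cref{lemma:gs}), so that $\Vert r^K\Vert\le\Vert\tilde r^{k+1}\Vert$ for $k<K$; together with $\eta^k\ge\eta^1$ this gives $H^k(\tilde r^{k+1},0)\ge\tfrac{\eta^1}{2}\Vert r^K\Vert^2$ and hence a prefactor of order $k$ in front of $\Vert r^k\Vert^2$ (up to an index shift).

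The main obstacle is bounding the residual sum $\sum_k \tfrac{\eta^k-\eta^{k-1}}{\eta^k}H^k(r^k_{ex},0)=\sum_k\tfrac{\eta^k-\eta^{k-1}}{2}\Vert r^k_{ex}\Vert^2$ by $\beta\Vert r^0_{ex}\Vert_\infty$. Here the delicate point is that the weights $\eta^k-\eta^{k-1}$ must be matched against the decaying residual energy rather than estimated term by term. I would use the fidelity rule $\eta^k=\max(\beta/\gamma,\eta^{k-1})$ of \cref{lemma:fidelity_constraint}, with $\gamma=\Vert r^0_{ex}\Vert_\infty h/2$ and $h=1$ from \cref{remark:fidelity_constraint}, which ties each update of $\eta^k$ to $\beta/\gamma$; combined with the residual monotonicity $\Vert r^k_{ex}\Vert\le\Vert r^0_{ex}\Vert$ of \cref{lemma:exact_itr} and the fact that $\eta^k$ is nondecreasing, the increments telescope and the accumulated energy collapses to the claimed multiple of $\beta\Vert r^0_{ex}\Vert_\infty$. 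Assembling the two estimates yields \cref{eq:h_relation2}, and since its right-hand side is $\mathcal{O}(1/k)$ we conclude $\Vert r^k\Vert\to 0$.

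For the final claim I would note that $r^k_{ex}=r^0_{ex}-\sum_{i=1}^k r^i=f-u^k$, so it suffices to show $r^k_{ex}\to 0$. Passing to the limit in the Euler--Lagrange relation $s^k=\eta^k(r^k_{ex}-\tfrac{\alpha}{\eta^k}\nabla\cdot\tfrac{\boldsymbol{\tau}^{\bot}}{\vert\boldsymbol{\tau}^{\bot}\vert})\in\partial J(r^k)$, exactly as in the first-step theorem, and using $\Vert r^k\Vert\to 0$ forces the residual to vanish, whence $u^k=f-r^k_{ex}\to f$.
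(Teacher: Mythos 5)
Your proposal is correct and follows essentially the same route as the paper's proof: telescope \cref{eq:d_relation9}, drop the nonnegative Bregman terms, convert the $\tfrac{\eta^i-\eta^{i-1}}{\eta^i}H^i(r^i_{ex},0)$ sum into a telescoping sum in $\beta\Vert r^i_{ex}\Vert_\infty$ via the choice $\eta^i\sim 2\beta/\Vert r^i_{ex}\Vert_\infty$, and use the monotonicity of $\Vert\tilde r^k\Vert$ together with $\eta^k\ge\eta^1$ to extract the factor $k$. Your explicit use of $r=0$ (which cleanly kills the inner-product terms) and your Euler--Lagrange limiting argument for $u^k\to f$ are only presentational variants of what the paper leaves implicit or phrases as a contradiction with \cref{lemma:fidelity_constraint}.
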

\begin{proof}
Taking the sum of (\ref{eq:d_relation9}) as follows
\begin{align*}
    \begin{split}
        & \sum_{i=1}^k \left[ D^{\tilde{s}^{i+1}}(r,\tilde{r}^{i+1}) 
                - D^{\tilde{s}^i}(r,\tilde{r}^i) 
                + D^{\tilde{s}^i}(\tilde{r}^{i+1},\tilde{r}^i)
                + H^i(\tilde{r}^{i+1},0)
                - \cfrac{\eta^i - \eta^{i-1}}{\eta^i} H^i(r^i_{ex},0) \right],
    \end{split}
\end{align*}
we obtain
\begin{align}
\label{eq:d_relation11}
    \begin{split}
        D^{\tilde{s}^{k+1}}(r,\tilde{r}^{k+1}) + 
        \sum_{i=1}^k 
        \left[
            D^{\tilde{s}^i}(\tilde{r}^{i+1},\tilde{r}^i) + 
            H^i(\tilde{r}^{i+1},0) -
            \cfrac{\eta^i - \eta^{i-1}}{\eta^i} H^i(r^i_{ex},0)
        \right] &
        \\
        \leq  D^{\tilde{s}^1}(r,\tilde{r}^1). & 
    \end{split}
\end{align}
Due to the non-negativity of Bregmann distance, the above inequality can be rewritten as follows
\begin{align}
\label{eq:d_relation12}
    \begin{split}
        & \sum_{i=1}^k 
        \left[
            H^i(\tilde{r}^{i+1},0) -
            \cfrac{\eta^i - \eta^{i-1}}{\eta^i} H^i(r^i_{ex},0)
        \right] 
        \leq  D^{\tilde{s}^1}(r,\tilde{r}^1). 
        \end{split}
\end{align}
Substituting $\eta^i = \frac{\beta}{\gamma^i} \sim \frac{2 \beta}{\Vert r^i_{ex} \Vert_{\infty}}$ into the above inequality, we obtain
\begin{align*}
    \begin{split}
        & \sum_{i=1}^k 
        \left[
            H^i(\tilde{r}^{i+1},0) -
            \cfrac{\eta^i - \eta^{i-1}}{2}\Vert r^i_{ex} \Vert^2
        \right] 
        \leq  D^{\tilde{s}^1}(r,\tilde{r}^1),
        \\
        \Rightarrow & \sum_{i=1}^k 
        \left[
            H^i(\tilde{r}^{i+1},0) -
            \cfrac{\beta(\Vert r^{i-1}_{ex} \Vert_{\infty} - \Vert r^i_{ex} \Vert_{\infty})}{\Vert r^i_{ex} \Vert_{\infty} \Vert r^{i-1}_{ex} \Vert_{\infty}}\Vert r^i_{ex} \Vert^2
        \right] 
        \leq  D^{\tilde{s}^1}(r,\tilde{r}^1),
        \\
        \Rightarrow & \sum_{i=1}^k 
        \left[
            \cfrac{\eta^i}{2} \Vert\tilde{r}^{i+1}\Vert^2 -
            \beta(\Vert r^{i-1}_{ex} \Vert_{\infty} - \Vert r^i_{ex} \Vert_{\infty})
        \right] 
        \leq  D^{\tilde{s}^1}(r,\tilde{r}^1).
        \end{split}
\end{align*}
Since $\Vert\tilde{r}^k\Vert$ is monotonically nonincreasing, it results
\begin{equation*}
        k \eta^1 \Vert\tilde{r}^k\Vert^2
        \leq 2 D^{\tilde{s}^1}(r,\tilde{r}^1) + 2 \beta(\Vert r^0_{ex} \Vert_{\infty} - \Vert r^k_{ex} \Vert_{\infty})
        \leq 2 D^{\tilde{s}^1}(r,\tilde{r}^1) + 2 \beta \Vert r^0_{ex} \Vert_{\infty}.  
\end{equation*}
We obtain \cref{eq:h_relation2}. It implies that, when $k \rightarrow \infty$, $r^k$ converges to $0$ 
with rate
\begin{equation*}
    \Vert r^k \Vert 
    \leq \Vert\tilde{r}^k\Vert
    \leq \sqrt{ \frac{2 D^{\tilde{s}^1}(r,\tilde{r}^1) + 2 \beta \Vert r^0_{ex} \Vert_{\infty}}{k\eta^1} } 
    = \mathcal{O}(k^{-1/2}). 
\end{equation*}

When $r^k$ converges to $0$, it is easy to find $u^k$ converges to $f$ by making a contradiction against Lemma (\ref{lemma:fidelity_constraint}).

\end{proof}

\begin{remark}
In the proof of \cref{thm:convergence_2ndstp}, defining $\eta^i \sim \frac{2\beta}{\Vert r^i_{ex}\Vert_{\infty}}$, only the case $\eta^{i-1}<=\eta^i$ is considered, otherwise, as our setting $\eta^i = \max ( \beta / \gamma, \eta^{i-1})$, $\eta^{i-1}=\eta^i$ so that the convergence follows directly by the fact that the term $\cfrac{\eta^i - \eta^{i-1}}{\eta^i} H^i(r^i_{ex},0) $ vanishes in (\ref{eq:d_relation12}). 

\end{remark}

\subsection{Iterative regularization applied separately to each of TV-Stokes steps} 
The Richardson-like iterations are applied separately on both two steps of TV-Stokes model is listed below, cf. \Cref{alg:tvs12}. The properties of well-definedness and convergence naturally follow from the separated cases addressed in previous subsections.

\begin{algorithm}
    \caption{Separated iterative regularization for TV-Stokes}
    \label{alg:tvs12}
    \begin{algorithmic}[1]
        \State{Initialize $k = 0$, $\mathbf{r}_{ex}^0 = \nabla^{\bot} f$, $\boldsymbol{\tau}
        = 0$;}
        \Repeat
            \State{$k = k + 1$;}        
            \State{$\mathbf{r}^{k} = \mbox{arg} \min_{\mathbf{r} \in BV(\Omega)} \{ 
            J(\Pi\mathbf{r}) + H(\mathbf{r},\mathbf{r}_{ex}^{k-1}) \}$;}
            \State{$\mathbf{r}_{ex}^{k}  = \mathbf{r}_{ex}^{k-1} - \mathbf{r}^{k}$;}   
            \State{$\boldsymbol{\tau} = \boldsymbol{\tau} + \mathbf{r}^k$;}
        \Until{satisfied}
        \State{Initialize $k = 0$, $r_{ex}^0 := f$, $u = 0$;}
        \Repeat
            \State{$k = k + 1$;}        
            \State{$r^k = \mbox{arg} \min_{r \in BV(\Omega)}  \{ J(r)  - \alpha \nabla \cdot \frac{\boldsymbol{\tau}^{\bot}}{\vert\boldsymbol{\tau}^{\bot}\vert} + 
            H^k(r,r_{ex}^{k-1}) \}$;}
            \State{$r_{ex}^{k}  = r_{ex}^{k-1} - r^{k} $;}
            \State{$u = u + r^k$;}
        \Until{satisfied}
        \State \textbf{return} $u$.
    \end{algorithmic} 
\end{algorithm}

\section{Numerical experiments}
\label{sec:numt}

In this section, we present our experiments on the effectiveness of proposed algorithms on smooth structures, e.g., Lena's face, on their capability in preserving both sharp edges and smooth patterns, e.g., fingerprint with clean surrounding, and finally on structures mixed with pinstripes and smooth surfaces, cf. e.g., figures of Barbara.  

In our experiments, we employ the dual-formula-based method to solve the TV-Stokes model, cf. \cite{Elo2009a}, where we keep the step sizes for the line search the same as $1/4$ throughout the experiments. The noise resource considered in this paper is of Gaussian.

\begin{figure}[!htbp]
    \captionsetup[subfigure]{
                            justification=centering,
                            labelformat=empty,
                            }
    \centering
    \begin{minipage}{1.0\textwidth}
        \centering
        \subfloat[][Noisy image]
            {
            \includegraphics[width=0.3\linewidth,
                            height=0.3\linewidth,
                            angle=0]{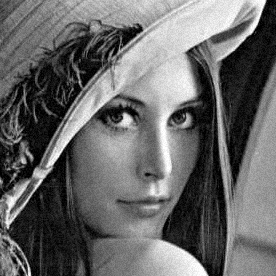}
            }
            \hspace{0.0005\linewidth}
        \subfloat[][Restored image]
            {
            \includegraphics[width=0.3\linewidth,
                            height=0.3\linewidth,
                            angle=0]{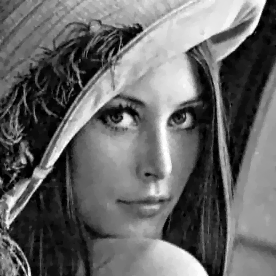}
            }
            \hspace{0.0005\linewidth}
        \subfloat[][Clean image]
            {
            \includegraphics[width=0.3\linewidth,
                            height=0.3\linewidth,
                            angle=0]{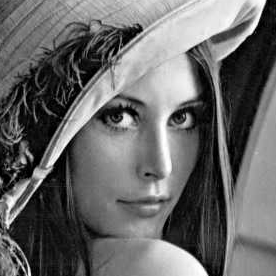}
            }
    \end{minipage}
    \hfill
    \vspace{0.005\linewidth}
    \begin{minipage}{1.0\textwidth}
        \centering
        \subfloat[][Initial Gaussian noise]
            {
            \includegraphics[width=0.3\linewidth,
                            height=0.3\linewidth,
                            angle=0]{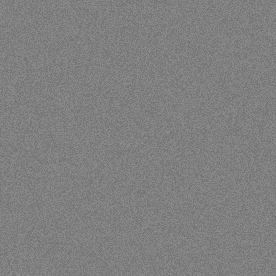}
            }
            \hspace{0.0005\linewidth}
        \subfloat[][Noise: Restored image]
            {
            \includegraphics[width=0.3\linewidth,
                            height=0.3\linewidth,
                            angle=0]{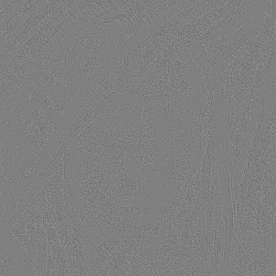}
            }
            \hspace{0.0005\linewidth}
        \subfloat[][$\Vert u - g \Vert$]
            {
                \begin{tikzpicture}
                    \node[anchor=south west,inner sep=0pt,outer sep=0pt] (img)
                    {
                            \includegraphics[width=0.3\linewidth, 
                                            height=0.3\linewidth,
                                            angle=0
                                            ]{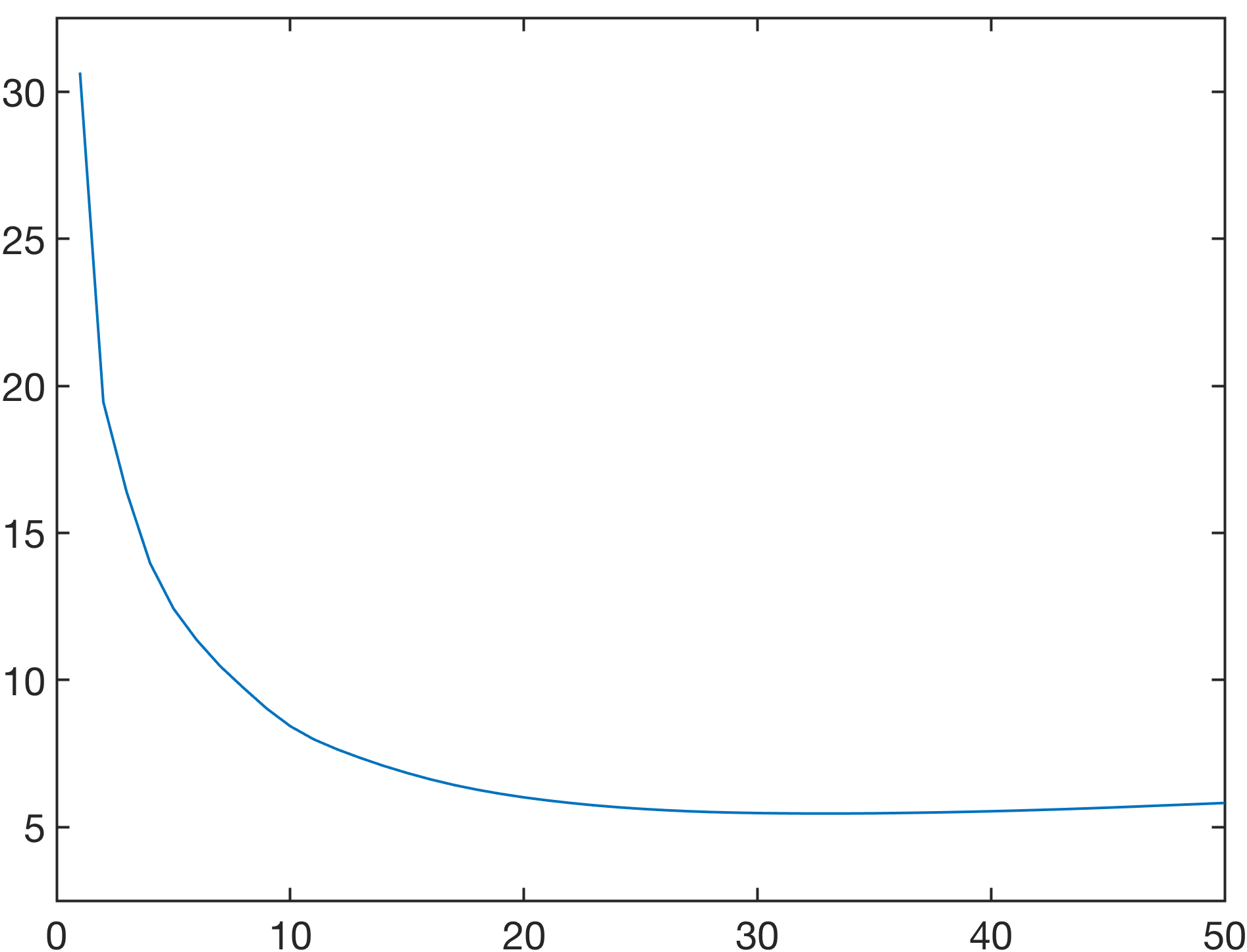}
                    };
                    \begin{scope}[x=(img.south east),y=(img.north west)]
                        \node[circle,draw,red!80,inner sep=0pt,outer sep=0pt,minimum size=0.2cm] (P1) at (0.65,0.15) {};
                    \end{scope}
                \end{tikzpicture}
            }
    \end{minipage}
    \hfill
    \vspace{0.005\linewidth}
    \begin{minipage}{1.0\textwidth}
        \centering
        \subfloat[][$\Vert u - f \Vert$]
            {
                \begin{tikzpicture}
                    \node[anchor=south west,inner sep=0pt,outer sep=0pt] (iimg)
                    {
                            \includegraphics[width=0.3\linewidth, 
                                            height=0.3\linewidth,
                                            angle=0
                                            ]{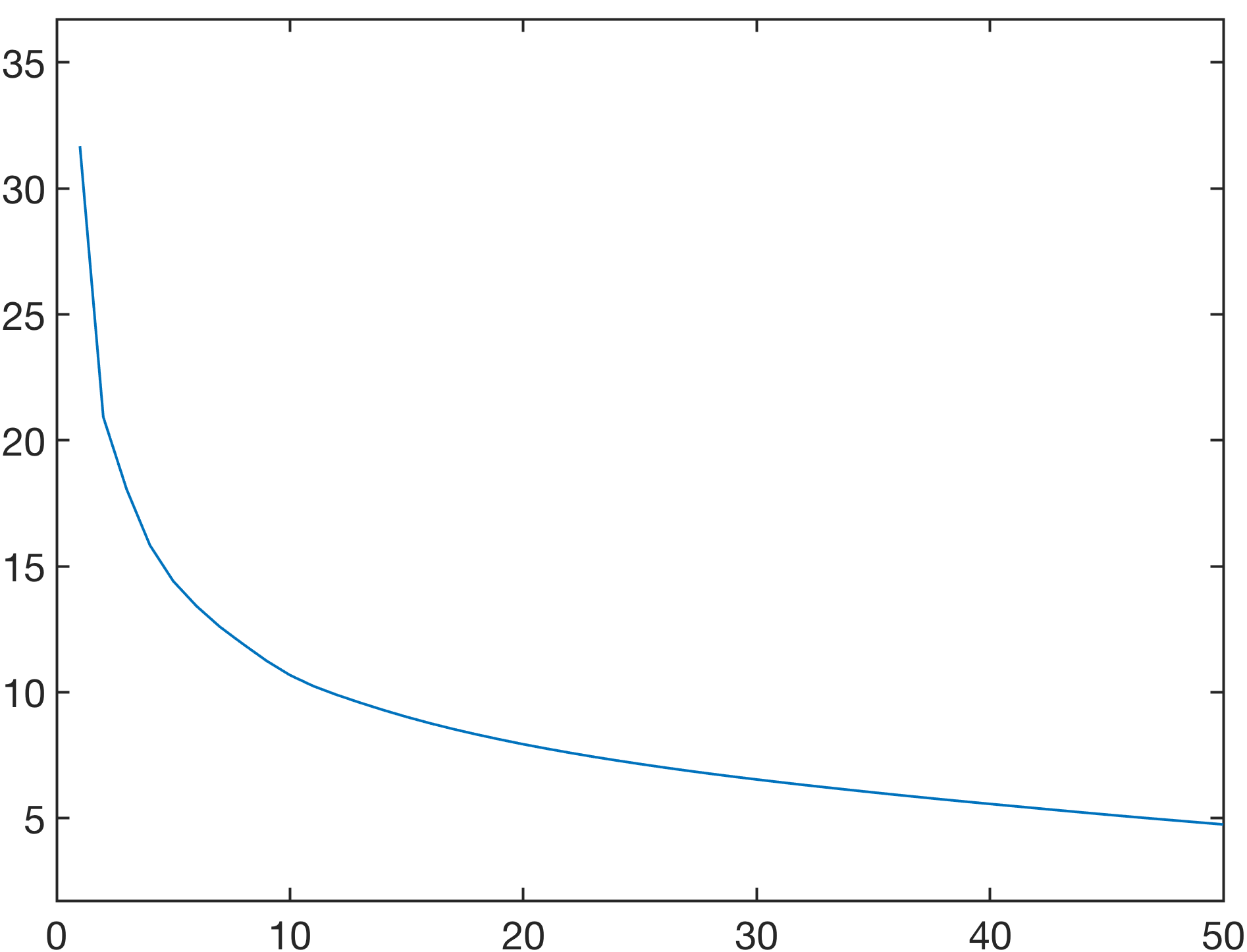}
                    };
                    \begin{scope}[x=(iimg.south east),y=(iimg.north west)]
                        \node[circle,draw,red!80,inner sep=0pt,outer sep=0pt,minimum size=0.2cm] (P2) at (0.65,0.17) {};
                        \draw[dashed,red] (0.05,0.20) -- (0.98,0.20);
                    \end{scope}
                \end{tikzpicture}
            }
            \hspace{0.0005\linewidth}
        \subfloat[][]
            {
                {\transparent{0.0}
                    \includegraphics[width=0.2828\linewidth,
                                    height=0.3\linewidth,
                                    angle=0]{lena_noisy_1_m10_ns2}
                }
            }
            \hspace{0.0005\linewidth}
        \subfloat[][]
            {
                {\transparent{0.0}
                    \includegraphics[width=0.2828\linewidth,
                                    height=0.3\linewidth,
                                    angle=0]{lena_noisy_1_m10_ns2}
                }
            }
    \end{minipage}
    \caption{Showing the effect of applying the Osher-like iterative regularization on the ROF model, cf. \Cref{alg:osher}. The initial image is of a Gaussian noise at noise level $7.97$ and with $PSNR = 30.79$. The final restored image is at noise level $5.46$ and with $PSNR = 34.08$.}
    \label{fig:alm10}
\end{figure}
We start the experiment with \Cref{alg:osher}, applying Osher's iterative regularization algorithm on a Lena portrait, cf. \cref{fig:alm10}. The associated ROF model is solved via the Chambolle dual formula, cf. \cite{Chambolle1997} for the details. The initial noise level is $7.97$ while the corresponding Peak Signal-Noise Ratio (PSNR) is $30.79$. In this paper, the PSNR number is calculated via the \textsc{Matlab} function \verb|psnr|. The curve $\Vert u-g \Vert$ shows an optimal solution at the iteration $33$ where the restored image $u$ is most close to the clean image $g$ in $L^2$, the curve $\Vert u-f \Vert$ shows the resulted image $u$ through the iteration is converging to the initial image $f$. The restored image via this experiment is at noise level $5.46$ and with PSNR $34.08$. The result suffers the effect of stair-case inherited from ROF model.

For verifying the effectiveness of our proposed algorithms, we apply \Cref{alg:tvs1}, \Cref{alg:tvs2}, and \Cref{alg:tvs12}, respectively, on the same image, the Lena portrait, with the same noise. 

\begin{figure}[!htbp]
    \captionsetup[subfigure]{
                            justification=centering,
                            labelformat=empty,
                            }
    \centering
    \begin{minipage}{1.0\textwidth}
        \centering
        \subfloat[][Noisy image]
            {
            \includegraphics[width=0.3\linewidth,
                            height=0.3\linewidth,
                            angle=0]{lena_noisy_1}
            }
            \hspace{0.0005\linewidth}
        \subfloat[][Restored image]
            {
            \includegraphics[width=0.3\linewidth,
                            height=0.3\linewidth,
                            angle=0]{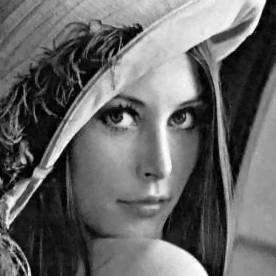}
            }
            \hspace{0.0005\linewidth}
        \subfloat[][Clean image]
            {
            \includegraphics[width=0.3\linewidth,
                            height=0.3\linewidth,
                            angle=0]{lena_clean}
            }
    \end{minipage}
    \hfill
    \vspace{0.005\linewidth}
    \begin{minipage}{1.0\textwidth}
        \centering
        \subfloat[][Initial Gaussian noise]
            {
            \includegraphics[width=0.3\linewidth,
                            height=0.3\linewidth,
                            angle=0]{lena_noisy_1_noise}
            }
            \hspace{0.0005\linewidth}
        \subfloat[][Noise: Restored image]
            {
            \includegraphics[width=0.3\linewidth,
                            height=0.3\linewidth,
                            angle=0]{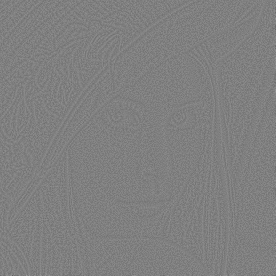}
            }
            \hspace{0.0005\linewidth}
        \subfloat[][]
            {
                {\transparent{0.0}
                    \includegraphics[width=0.2828\linewidth,
                                    height=0.3\linewidth,
                                    angle=0]{lena_noisy_1_m1_noise}
                }
            }
    \end{minipage}
    \caption{Showing the effect of applying the iterative regularization on the first step of TV-Stokes model, cf. \Cref{alg:tvs1}. The initial image is of a Gaussian noise at noise level $7.97$ and with $PSNR = 30.79$. The final restored image is of noise level $5.10$ and $PSNR = 34.66$.}
    \label{fig:alm1}
\end{figure}
We first apply \Cref{alg:tvs1} on the Lena portrait, that is using the Richardson-like iterative regularization on the first step of the TV-Stokes model, cf. \cref{fig:alm1}. The parameter $\beta$ is set to be $6.5$ and $3$ for the first step and the second step, respectively. The parameter $\alpha$ is given to be $0.9$. We observe that the optimal solution for the first step achieves at the iteration $13$ in our experiment. The final restored image is at noise level $5.10$ and with PSNR $34.66$. The result shows an improvement of the smoothness of Lena's face.

\begin{figure}[!htbp]
    \captionsetup[subfigure]{
                            justification=centering,
                            labelformat=empty,
                            }
    \centering
    \begin{minipage}{0.98\textwidth}
        \centering
        \subfloat[][Noisy image]
            {
                    \includegraphics[width=0.3\linewidth, 
                                    height=0.3\linewidth,
                                    angle=0
                                    ]{lena_noisy_1}
            }
            \hspace{0.0005\linewidth}
        \subfloat[][The first iteration]
            {
                    \includegraphics[width=0.3\linewidth, 
                                    height=0.3\linewidth,
                                    angle=0
                                    ]{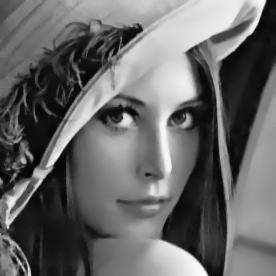}
            }
            \hspace{0.0005\linewidth}
        \subfloat[][The third iteration]
            {
                    \includegraphics[width=0.3\linewidth, 
                                    height=0.3\linewidth,
                                    angle=0
                                    ]{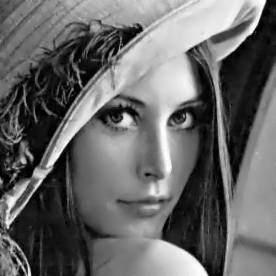}
            }
    \end{minipage}
    \hfill
    \vspace{0.005\linewidth}
    \begin{minipage}{0.98\textwidth}
        \centering
        \subfloat[][Gaussian noise]
            {
                    \includegraphics[width=0.3\linewidth, 
                                    height=0.3\linewidth,
                                    angle=0
                                    ]{lena_noisy_1_noise}
            }
            \hspace{0.0005\linewidth}
        \subfloat[][Noise: The first iteration]
            {
                    \includegraphics[width=0.3\linewidth, 
                                    height=0.3\linewidth,
                                    angle=0
                                    ]{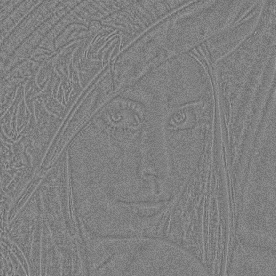}
            }
            \hspace{0.0005\linewidth}
        \subfloat[][Noise: The third iteration]
            {
                    \includegraphics[width=0.3\linewidth, 
                                    height=0.3\linewidth,
                                    angle=0
                                    ]{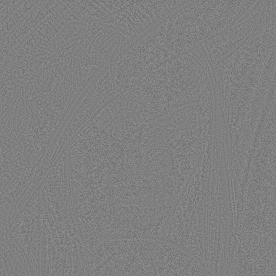}
            }
    \end{minipage}
    \hfill
    \vspace{0.005\linewidth}
    \begin{minipage}{0.98\textwidth}
        \centering
        \subfloat[][The thirteenth iteration]
            {
                    \includegraphics[width=0.3\linewidth, 
                                    height=0.3\linewidth,
                                    angle=0
                                    ]{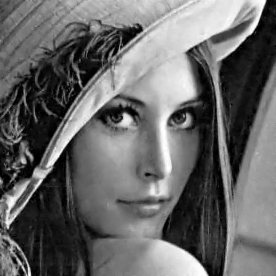}
            }
            \hspace{0.0005\linewidth}
        \subfloat[][Clean image]
            {
                    \includegraphics[width=0.3\linewidth, 
                                    height=0.3\linewidth,
                                    angle=0
                                    ]{lena_clean}
            }
            \hspace{0.0005\linewidth}
        \subfloat[][$\Vert u - g \Vert$]
            {
                \begin{tikzpicture}
                    \node[anchor=south west,inner sep=0pt,outer sep=0pt] (img)
                    {
                            \includegraphics[width=0.3\linewidth, 
                                            height=0.3\linewidth,
                                            angle=0
                                            ]{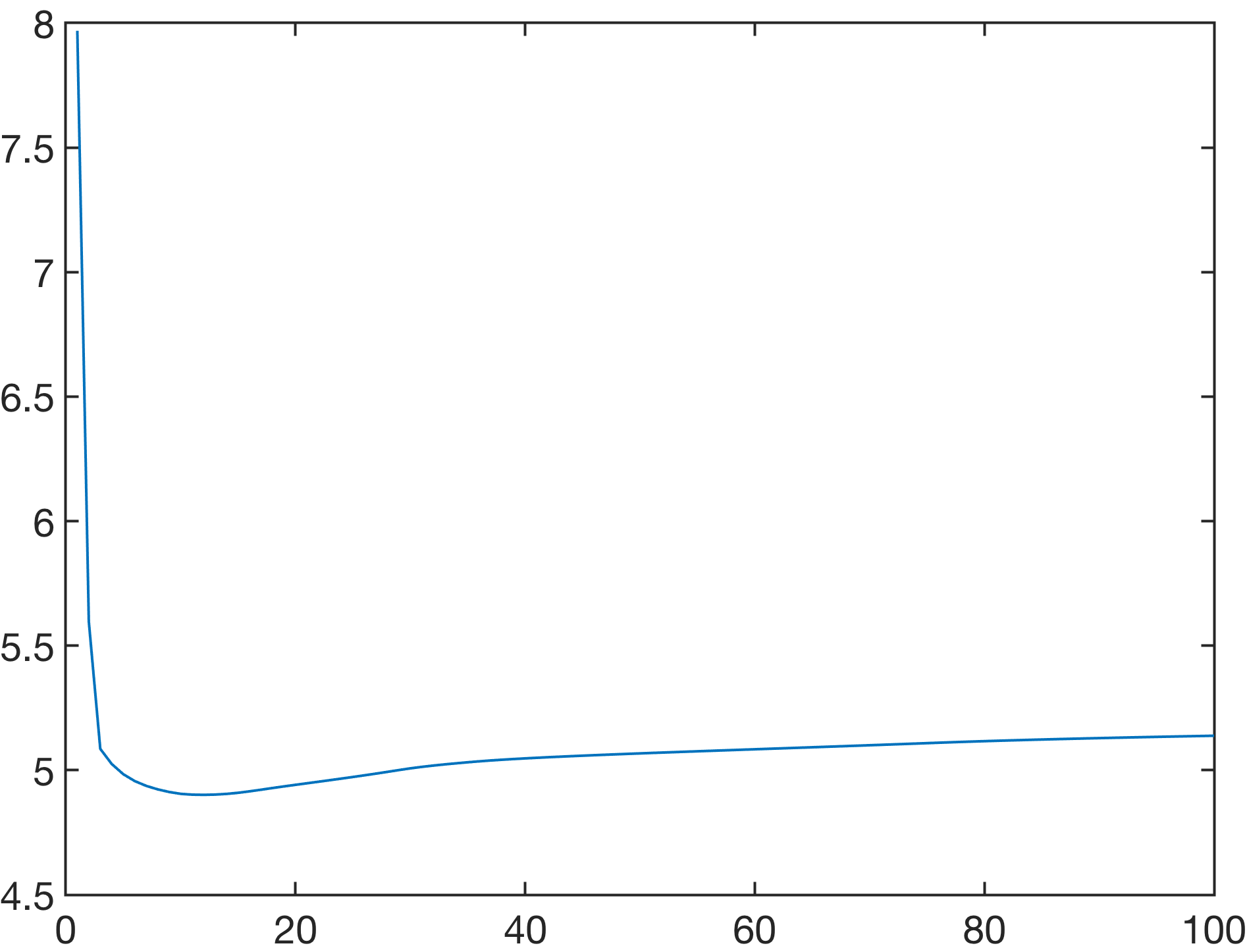}
                    };
                    \begin{scope}[x=(img.south east),y=(img.north west)]
                        \node[circle,draw,red!80,inner sep=0pt,outer sep=0pt,minimum size=0.2cm] (P1) at (0.15,0.17) {};
                    \end{scope}
                \end{tikzpicture}
            }
    \end{minipage}
    \hfill
    \vspace{0.005\linewidth}
    \begin{minipage}{0.98\textwidth}
        \centering
        \subfloat[][Noise: The thirteenth iteration]
            {
                    \includegraphics[width=0.3\linewidth, 
                                    height=0.3\linewidth,
                                    angle=0
                                    ]{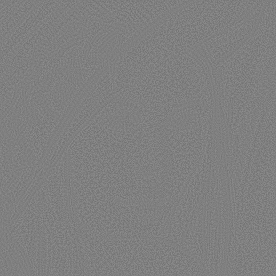}
            }
            \hspace{0.0005\linewidth}
        \subfloat[][$\Vert u - f \Vert$]
            {
                \begin{tikzpicture}
                    \node[anchor=south west,inner sep=0pt,outer sep=0pt] (iimg)
                    {
                            \includegraphics[width=0.3\linewidth, 
                                            height=0.3\linewidth,
                                            angle=0
                                            ]{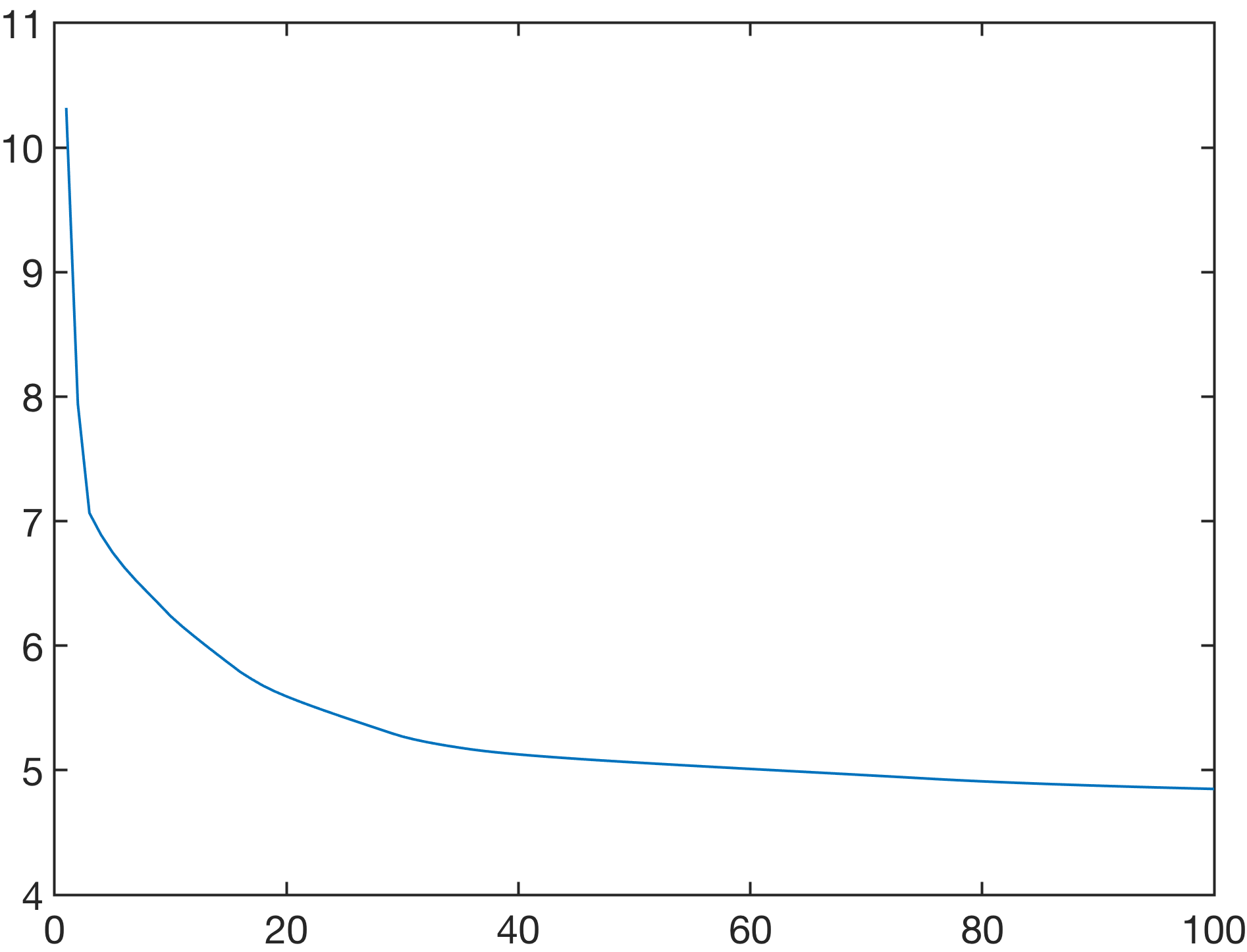}
                    };
                    \begin{scope}[x=(iimg.south east),y=(iimg.north west)]
                        \node[circle,draw,red!80,inner sep=0pt,outer sep=0pt,minimum size=0.2cm] (P2) at (0.15,0.335) {};
                        \draw[dashed,red] (0.05,0.58) -- (0.98,0.58);
                    \end{scope}
                \end{tikzpicture}
            }
            \hspace{0.0005\linewidth}
        \subfloat[][]
            {
                {\transparent{0.0}
                    \includegraphics[width=0.2828\linewidth, 
                                    height=0.3\linewidth,
                                    angle=0
                                    ]{lena_noisy_1_m2_ns2}
                }
            }
    \end{minipage}
    \caption{Showing the effect of applying the iterative regularization on the second step of TV-Stokes model, cf. \Cref{alg:tvs2}. The initial image is of a Gaussian noise at noise level $7.97$ and with $PSNR = 30.79$. The final restored image is of noise level $4.90$ and $PSNR = 35.01$.}
    \label{fig:alm2}
\end{figure}
The next experiment is applying the \Cref{alg:tvs2} on the same Lena portrait, that is using the Richardson-like iterative regularization on the second step of the TV-Stokes model, cf. \cref{fig:alm2}. The parameter $\beta$ is set to be $8.0$ and $2.5$ for the first step and the second step, respectively. The parameter $\alpha$ is given to be $0.9$. We observe that the optimal solution for the second step achieves at the iteration $12$ in our experiment. The curve $\Vert u-g \Vert$ shows an optimal solution at the iteration $12$ where the restored image $u$ is most close to the clean image $g$ in $L^2$, the curve $\Vert u-f \Vert$ shows the resulted image $u$ through the iteration is converging to the initial image $f$. The final restored image is at noise level $4.90$ and with PSNR $35.01$. The result shows a visible improvement both in the smoothness of Lena's face and in the preserving of details of the hat, comparing to \cref{fig:alm10} via Osher-like iterative regularization. The textures and fine structures are observed to be added back to the restored image accumulatively through the iterations. 

\begin{figure}[!htbp]
    \captionsetup[subfigure]{
                            justification=centering,
                            labelformat=empty,
                            }
    \centering
    \begin{minipage}{1.0\textwidth}
        \centering
        \subfloat[][Noisy image]
            {
            \includegraphics[width=0.3\linewidth,
                            height=0.3\linewidth,
                            angle=0]{lena_noisy_1}
            }
            \hspace{0.0005\linewidth}
        \subfloat[][Restored image]
            {
            \includegraphics[width=0.3\linewidth,
                            height=0.3\linewidth,
                            angle=0]{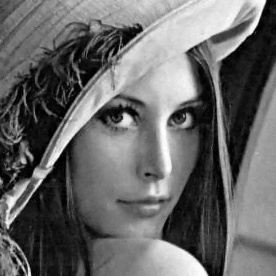}
            }
            \hspace{0.0005\linewidth}
        \subfloat[][Clean image]
            {
            \includegraphics[width=0.3\linewidth,
                            height=0.3\linewidth,
                            angle=0]{lena_clean}
            }
    \end{minipage}
    \hfill
    \vspace{0.005\linewidth}
    \begin{minipage}{1.0\textwidth}
        \centering
        \subfloat[][Gaussian noise]
            {
            \includegraphics[width=0.3\linewidth,
                            height=0.3\linewidth,
                            angle=0]{lena_noisy_1_noise}
            }
            \hspace{0.0005\linewidth}
        \subfloat[][Noise: Restored image]
            {
            \includegraphics[width=0.3\linewidth,
                            height=0.3\linewidth,
                            angle=0]{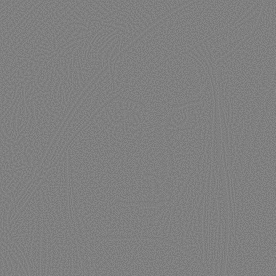}
            }
            \hspace{0.0005\linewidth}
        \subfloat[][$\Vert u - g \Vert$]
            {
                \begin{tikzpicture}
                    \node[anchor=south west,inner sep=0pt,outer sep=0pt] (img)
                    {
                            \includegraphics[width=0.3\linewidth, 
                                            height=0.3\linewidth,
                                            angle=0
                                            ]{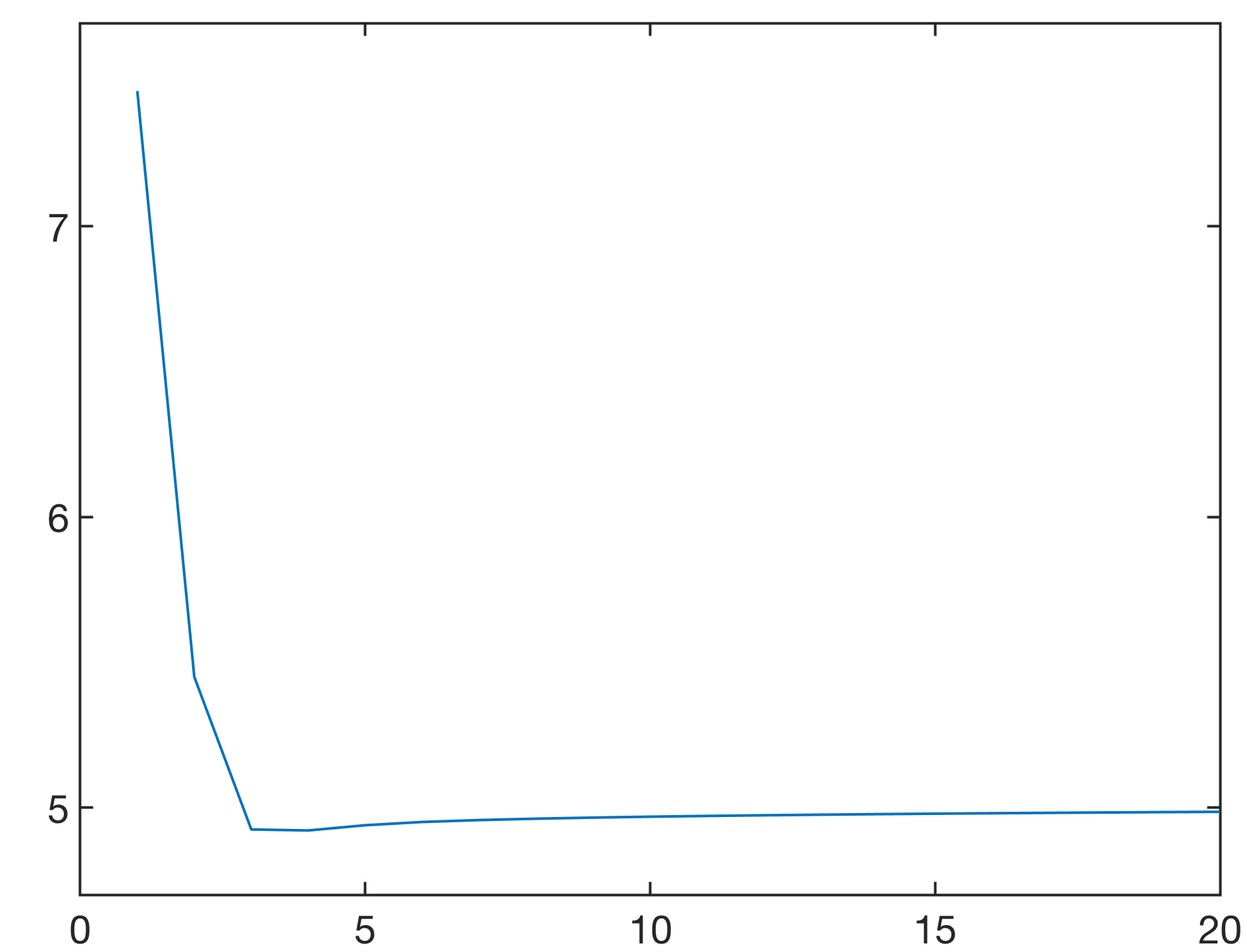}
                    };
                    \begin{scope}[x=(img.south east),y=(img.north west)]
                        \node[circle,draw,red!80,inner sep=0pt,outer sep=0pt,minimum size=0.2cm] (P1) at (0.20,0.13) {};
                    \end{scope}
                \end{tikzpicture}
            }
    \end{minipage}
    \hfill
    \vspace{0.005\linewidth}
    \begin{minipage}{1.0\textwidth}
        \centering
        \subfloat[][$\Vert u - f \Vert$]
            {
                \begin{tikzpicture}
                    \node[anchor=south west,inner sep=0pt,outer sep=0pt] (iimg)
                    {
                            \includegraphics[width=0.3\linewidth, 
                                            height=0.3\linewidth,
                                            angle=0
                                            ]{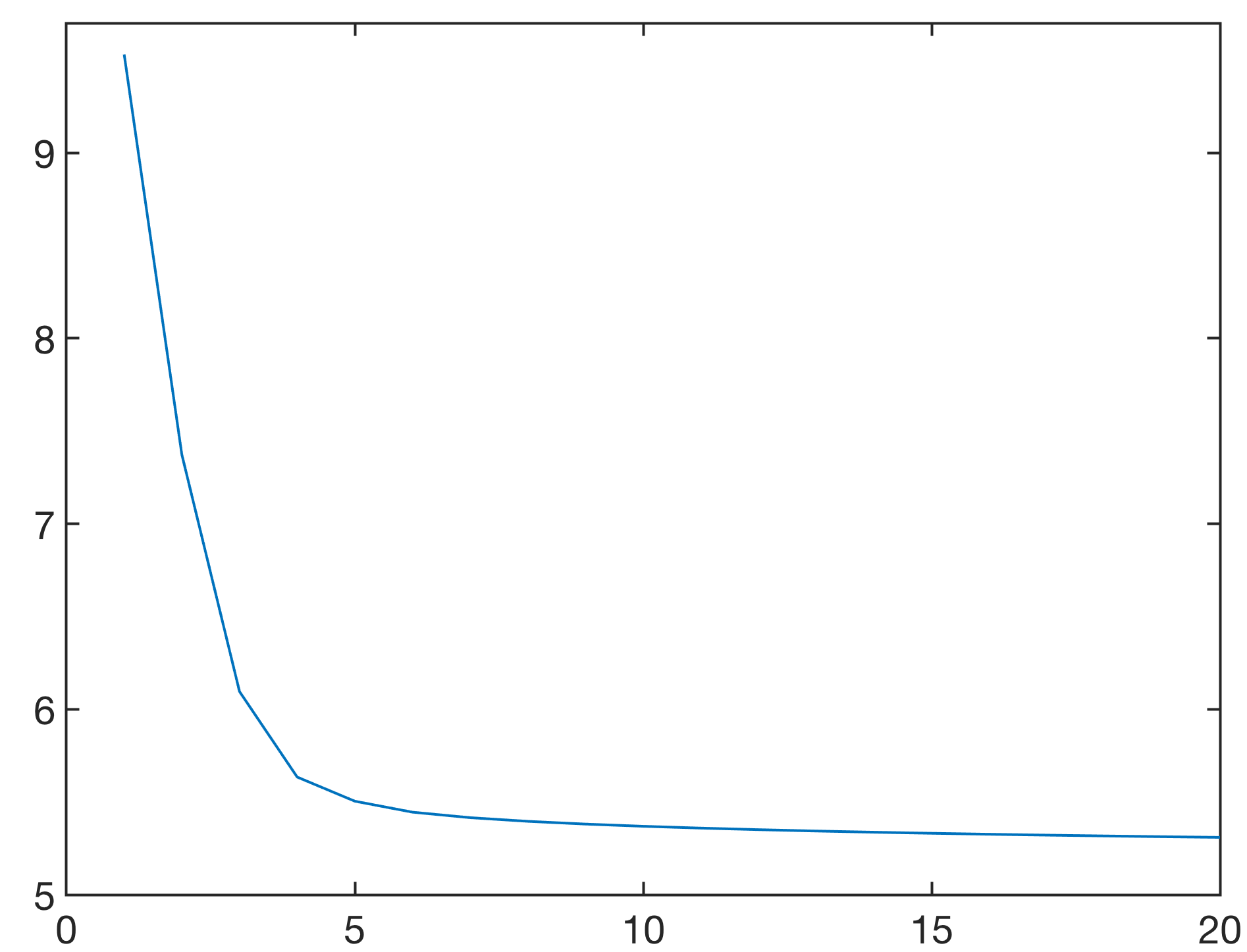}
                    };
                    \begin{scope}[x=(iimg.south east),y=(iimg.north west)]
                        \node[circle,draw,red!80,inner sep=0pt,outer sep=0pt,minimum size=0.2cm] (P2) at (0.20,0.27) {};
                        \draw[dashed,red] (0.05,0.64) -- (0.98,0.64);
                    \end{scope}
                \end{tikzpicture}
            }
            \hspace{0.0005\linewidth}
        \subfloat[][]
            {
                {\transparent{0.0}
                    \includegraphics[width=0.2828\linewidth,
                                    height=0.3\linewidth,
                                    angle=0]{lena_noisy_1_m3_ns4}
                }
            }
            \hspace{0.0005\linewidth}
        \subfloat[][]
            {
                {\transparent{0.0}
                    \includegraphics[width=0.2828\linewidth,
                                    height=0.3\linewidth,
                                    angle=0]{lena_noisy_1_m3_ns4}
                }
            }
    \end{minipage}
    \caption{Showing the effect of applying the iterative regularization onto both the first step and the second step of TV-Stokes model, cf. \Cref{alg:tvs12}. The initial image is of a Gaussian noise at noise level $7.97$ and with $PSNR = 30.79$. The final restored image is of noise level $4.94$ and $PSNR = 34.95$.}
    \label{fig:alm3}
\end{figure}
The last experiment on the same noisy Lena portrait is applying the \Cref{alg:tvs12}, that is using the Richardson-like iterative regularization on both the first step and the second step of the TV-Stokes model, cf. \cref{fig:alm3}. The parameter $\beta$ is set to be $6.5$ and $2.5$ for the first step and the second step, respectively. The parameter $\alpha$ is given to be $0.9$. We observe that the optimal solution for the first step achieves at the iteration $13$ and for the second step achieves at the iteration $3$ in our experiment. The curve $\Vert u-g \Vert$ shows an optimal solution at the iteration $3$ where the restored image $u$ is most close to the clean image $g$ in $L^2$, the curve $\Vert u-f \Vert$ shows the resulted image $u$ through the iteration is converging to the initial image $f$. The final restored image is at noise level $4.94$ and with PSNR $34.95$. The result also shows a visible improvement both in the smoothness of Lena's face and in the preserving of details of the hat, comparing to \cref{fig:alm10} via Osher-like iterative regularization.  

\begin{figure}[!htbp]
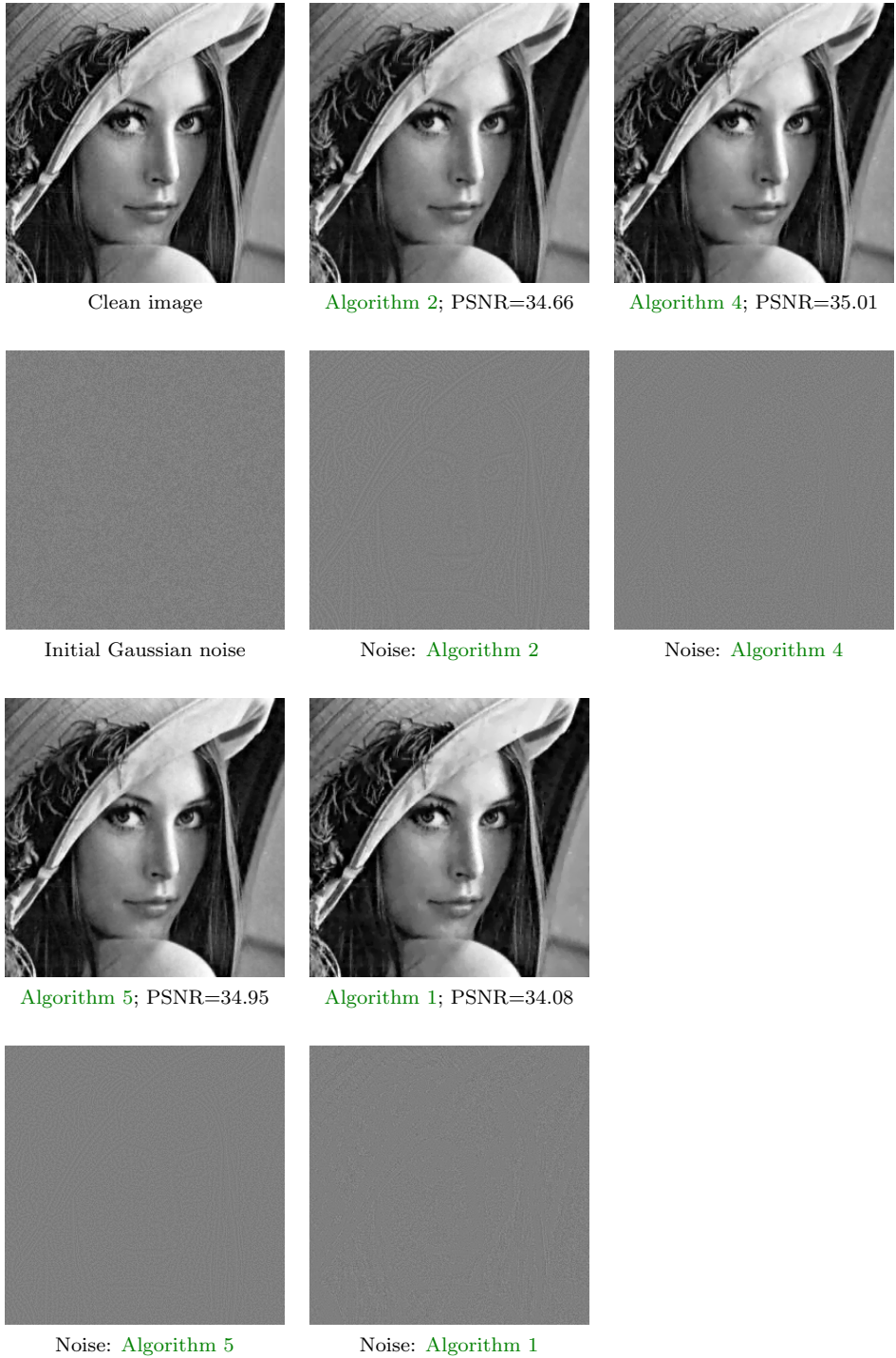

    \captionsetup[subfigure]{
                            justification=centering,
                            labelformat=empty,
                            }
    \centering
    \begin{minipage}{1.0\textwidth}
        \centering
        \subfloat[][Clean image]
            {
            \includegraphics[width=0.3\linewidth,
                            height=0.3\linewidth,
                            angle=0]{lena_clean}
            }
            \hspace{0.0005\linewidth}
        \subfloat[][\Cref{alg:tvs1}; PSNR=34.66]
            {
            \includegraphics[width=0.3\linewidth,
                            height=0.3\linewidth,
                            angle=0]{lena_noisy_1_m1}
            }
            \hspace{0.0005\linewidth}
        \subfloat[][\Cref{alg:tvs2}; PSNR=35.01]
            {
            \includegraphics[width=0.3\linewidth,
                            height=0.3\linewidth,
                            angle=0]{lena_noisy_1_m2}
            }
    \end{minipage}
    \hfill
    \vspace{0.005\linewidth}
    \begin{minipage}{1.0\textwidth}
        \centering
        \subfloat[][Initial Gaussian noise]
            {
            \includegraphics[width=0.3\linewidth,
                            height=0.3\linewidth,
                            angle=0]{lena_noisy_1_noise}
            }
            \hspace{0.0005\linewidth}
        \subfloat[][Noise: \Cref{alg:tvs1}]
            {
            \includegraphics[width=0.3\linewidth,
                            height=0.3\linewidth,
                            angle=0]{lena_noisy_1_m1_noise}
            }
            \hspace{0.0005\linewidth}
        \subfloat[][Noise: \Cref{alg:tvs2}]
            {
            \includegraphics[width=0.3\linewidth,
                            height=0.3\linewidth,
                            angle=0]{lena_noisy_1_m2_noise}
            }
    \end{minipage}
    \hfill
    \vspace{0.005\linewidth}
    \begin{minipage}{1.0\textwidth}
        \centering
        \subfloat[][\Cref{alg:tvs12}; PSNR=34.95]
            {
            \includegraphics[width=0.3\linewidth,
                            height=0.3\linewidth,
                            angle=0]{lena_noisy_1_m3}
            }
            \hspace{0.0005\linewidth}
        \subfloat[][\Cref{alg:osher}; PSNR=34.08]
            {
            \includegraphics[width=0.3\linewidth,
                            height=0.3\linewidth,
                            angle=0]{lena_noisy_1_m10}
            }
            \hspace{0.0005\linewidth}
        \subfloat[][]
            {
                {\transparent{0.0}
                    \includegraphics[width=0.2828\linewidth,
                                    height=0.3\linewidth,
                                    angle=0]{lena_noisy_1_m10}
                }
            }
    \end{minipage}
    \hfill
    \vspace{0.005\linewidth}
    \begin{minipage}{1.0\textwidth}
        \centering
        \subfloat[][Noise: \Cref{alg:tvs12}]
            {
            \includegraphics[width=0.3\linewidth,
                            height=0.3\linewidth,
                            angle=0]{lena_noisy_1_m3_noise}
            }
            \hspace{0.0005\linewidth}
        \subfloat[][Noise: \Cref{alg:osher}]
            {
            \includegraphics[width=0.3\linewidth,
                            height=0.3\linewidth,
                            angle=0]{lena_noisy_1_m10_noise}
            }
            \hspace{0.0005\linewidth}
        \subfloat[][]
            {
                {\transparent{0.0}
                    \includegraphics[width=0.2828\linewidth,
                                    height=0.3\linewidth,
                                    angle=0]{lena_noisy_1_m10_noise}
                }
            }
    \end{minipage}
    \caption{Comparing the effect of different iterative algorithms on the noisy Lena image. The initial noise level is $7.96$.}
    \label{fig:comparison_n1}
\end{figure}
For convenience in comparing the results from the different algorithms, we collect all the restored images and the clean image together, cf. \cref{fig:comparison_n1}. All the proposed algorithms have a visible improvement in handling the staircase effect compared to \Cref{alg:osher}. Among them, the \Cref{alg:tvs2} results of the best restoration concerning both the PSNR value and visual pleasure.

\begin{figure}[!htbp]
    \captionsetup[subfigure]{
                            justification=centering,
                            labelformat=empty,
                            }
    \centering
    \begin{minipage}{1.0\textwidth}
        \centering
        \subfloat[][Noisy image]
            {
                \begin{tikzpicture}[blankboximg]
                    \node[anchor=south west] (img)
                    {   
                        \includegraphics[width=0.3\linewidth, 
                                        height=0.3\linewidth,
                                        angle=0,
                                        ]{lena_clean}
                    };
                    \draw (img.south west) rectangle (img.north east);
                    \begin{scope}[x=(img.south east),y=(img.north west)]
                        \node[redboximg,draw,minimum height=2.8cm,minimum width=2.8cm] (B1) at (0.5,0.45) {};
                    \end{scope}
                \end{tikzpicture}
            }
            \hspace{0.0005\linewidth}
        \subfloat[][\Cref{alg:osher}]
            {
                \begin{tikzpicture}[blankboximg]
                    \node[anchor=south west] (iimg)
                    {   
                        \includegraphics[width=0.3\linewidth, 
                                        height=0.3\linewidth,
                                        angle=0,
                                        ]{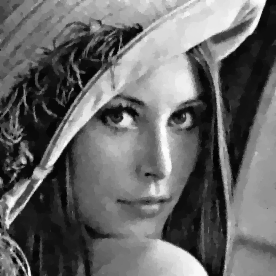}
                    };
                    \draw (iimg.south west) rectangle (iimg.north east);
                    \begin{scope}[x=(iimg.south east),y=(iimg.north west)]
                        \node[blueboximg,draw,minimum height=2.8cm,minimum width=2.8cm] (B2) at (0.5,0.45) {};
                    \end{scope}
                \end{tikzpicture}
            }
            \hspace{0.0005\linewidth}
        \subfloat[][\Cref{alg:tvs2}]
            {
                \begin{tikzpicture}[blankboximg]
                    \node[anchor=south west] (iiimg)
                    {   
                        \includegraphics[width=0.3\linewidth, 
                                        height=0.3\linewidth,
                                        angle=0,
                                        ]{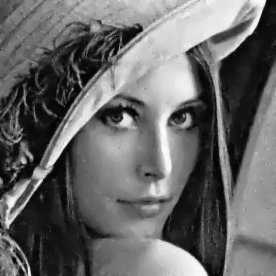}
                    };
                    \draw (iiimg.south west) rectangle (iiimg.north east);
                    \begin{scope}[x=(iiimg.south east),y=(iiimg.north west)]
                        \node[greenboximg,draw,minimum height=2.8cm,minimum width=2.8cm] (B3) at (0.5,0.45) {};
                    \end{scope}
                \end{tikzpicture}
            }
    \end{minipage}
    \hfill
    \vspace{0.005\linewidth}
    \begin{minipage}{1.0\textwidth}
        \centering
        \subfloat[][Part: Clean image]
            {
                \begin{tikzpicture}[redboximg]
                    \node (img1)
                    {   
                        \includegraphics[width=0.3\linewidth, 
                                        height=0.3\linewidth,
                                        angle=0,
                                        clip=true,
                                        trim = 15mm 10mm 15mm 20mm 
                                        ]{lena_clean}
                    };
                    \draw (img1.south west) rectangle (img1.north east);
                \end{tikzpicture}
            }
            \hspace{0.0005\linewidth}
        \subfloat[][Part: \Cref{alg:osher}]
            {
                \begin{tikzpicture}[blueboximg]
                    \node (iimg1)
                    {   
                        \includegraphics[width=0.3\linewidth, 
                                        height=0.3\linewidth,
                                        angle=0,
                                        clip=true,
                                        trim = 15mm 10mm 15mm 20mm 
                                        ]{lena_noisy_2_m10}
                    };
                    \draw (iimg1.south west) rectangle (iimg1.north east);
                \end{tikzpicture}
            }
            \hspace{0.0005\linewidth}
        \subfloat[][Part: \Cref{alg:tvs2}]
            {
                \begin{tikzpicture}[greenboximg]
                    \node (iiimg1)
                    {   
                        \includegraphics[width=0.3\linewidth, 
                                        height=0.3\linewidth,
                                        angle=0,
                                        clip=true,
                                        trim = 15mm 10mm 15mm 20mm 
                                        ]{lena_noisy_2_m2}
                    };
                    \draw (iiimg1.south west) rectangle (iiimg1.north east);
                \end{tikzpicture}
            }
    \end{minipage}
    \caption{Comparing the effect of the proposed Richardson-like iterative \Cref{alg:tvs2} for the second step of the TV-Stokes model and the Osher-like \Cref{alg:osher} for the ROF model on the noisy Lena image. The initial noise level is $15.71$.}
    \label{fig:comparison_n2}
\end{figure}
\begin{figure}[!htbp]
    \captionsetup[subfigure]{
                            justification=centering,
                            labelformat=empty,
                            }
    \centering
    \begin{minipage}{1.0\textwidth}
        \centering
        \subfloat[][Noisy image]
            {
                \begin{tikzpicture}[blankboximg]
                    \node[anchor=south west] (img)
                    {   
                        \includegraphics[width=0.3\linewidth, 
                                        height=0.3\linewidth,
                                        angle=0,
                                        ]{lena_clean}
                    };
                    \draw (img.south west) rectangle (img.north east);
                    \begin{scope}[x=(img.south east),y=(img.north west)]
                        \node[redboximg,draw,minimum height=1.3cm,minimum width=1.3cm] (B1) at (0.5,0.385) {};
                    \end{scope}
                \end{tikzpicture}
            }
            \hspace{0.0005\linewidth}
        \subfloat[][\Cref{alg:osher}]
            {
                    \begin{tikzpicture}[blankboximg]
                        \node[anchor=south west] (iimg)
                        {   
                            \includegraphics[width=0.3\linewidth, 
                                            height=0.3\linewidth,
                                            angle=0,
                                            ]{lena_noisy_2_m10}
                        };
                        \draw (iimg.south west) rectangle (iimg.north east);
                        \begin{scope}[x=(iimg.south east),y=(iimg.north west)]
                            \node[blueboximg,draw,minimum height=1.3cm,minimum width=1.3cm] (B2) at (0.5,0.385) {};
                        \end{scope}
                    \end{tikzpicture}
            }
            \hspace{0.0005\linewidth}
        \subfloat[][\Cref{alg:tvs2}]
            {
                \begin{tikzpicture}[blankboximg]
                    \node[anchor=south west] (iiimg)
                    {   
                        \includegraphics[width=0.3\linewidth, 
                                        height=0.3\linewidth,
                                        angle=0,
                                        ]{lena_noisy_2_m2}
                    };
                    \draw (iiimg.south west) rectangle (iiimg.north east);
                    \begin{scope}[x=(iiimg.south east),y=(iiimg.north west)]
                        \node[selfDefinedColorboximg,draw,minimum height=1.3cm,minimum width=1.3cm] (B3) at (0.5,0.385) {};
                    \end{scope}
                \end{tikzpicture}
            }
    \end{minipage}
    \hfill
    \vspace{0.005\linewidth}
    \begin{minipage}{1.0\textwidth}
        \centering
        \subfloat[][Part: Clean image]
            {
                \begin{tikzpicture}[redboximg]
                    \node (img1)
                    {   
                        \includegraphics[width=0.3\linewidth, 
                                        height=0.3\linewidth,
                                        angle=0,
                                        clip=true,
                                        trim = 15mm 10mm 15mm 20mm 
                                        ]{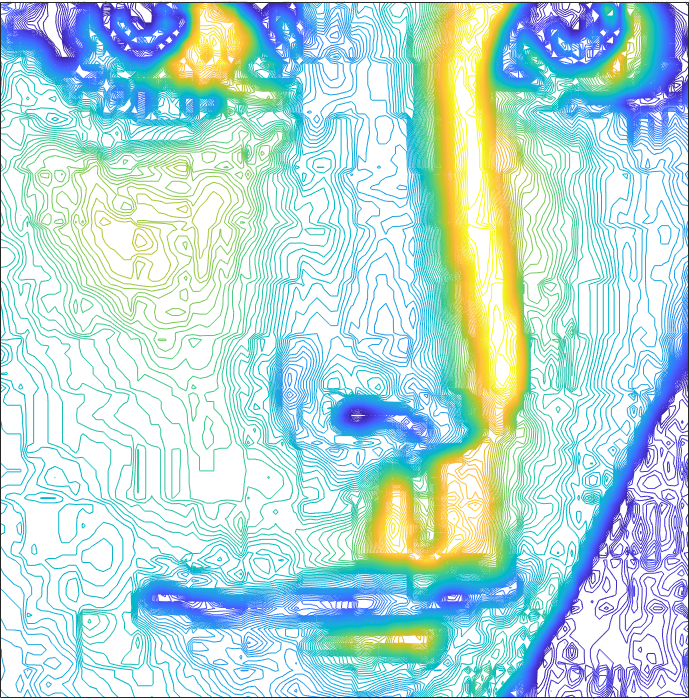}
                    };
                    \draw (img1.south west) rectangle (img1.north east);
                \end{tikzpicture}
            }
            \hspace{0.0005\linewidth}
        \subfloat[][Part: \Cref{alg:osher}]
            {
                \begin{tikzpicture}[blueboximg]
                    \node (iimg1)
                    {   
                        \includegraphics[width=0.3\linewidth, 
                                        height=0.3\linewidth,
                                        angle=0,
                                        clip=true,
                                        trim = 15mm 10mm 15mm 20mm 
                                        ]{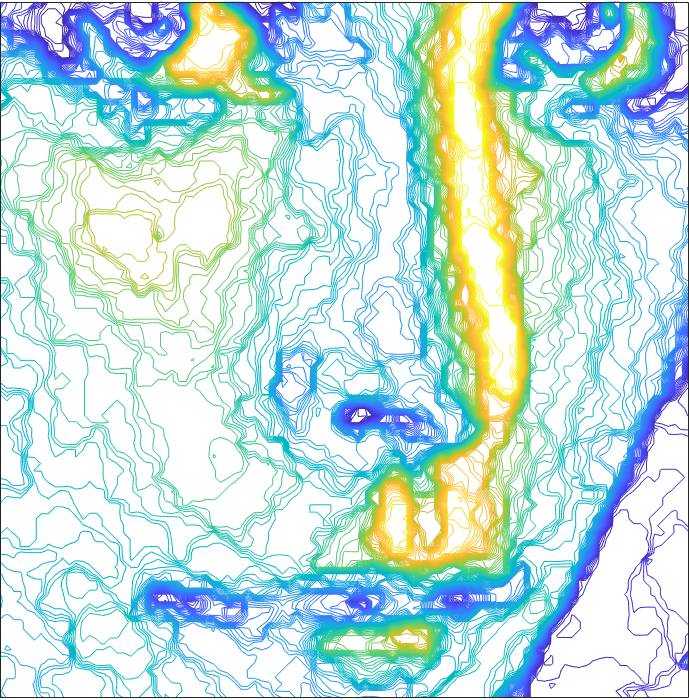}
                    };
                    \draw (iimg1.south west) rectangle (iimg1.north east);
                \end{tikzpicture}
            }
            \hspace{0.0005\linewidth}
        \subfloat[][Part: \Cref{alg:tvs2}]
            {
                \begin{tikzpicture}[selfDefinedColorboximg]
                    \node (iiimg1)
                    {   
                        \includegraphics[width=0.3\linewidth, 
                                        height=0.3\linewidth,
                                        angle=0,
                                        clip=true,
                                        trim = 15mm 10mm 15mm 20mm 
                                        ]{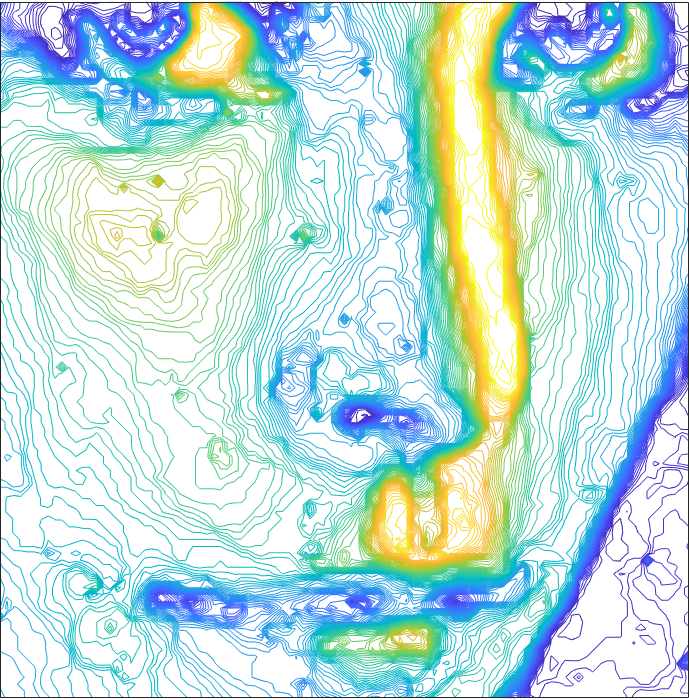}
                    };
                    \draw (iiimg1.south west) rectangle (iiimg1.north east);
                \end{tikzpicture}
            }
    \end{minipage}
    \caption{Contours show, comparing the effect of \Cref{alg:tvs2} and \Cref{alg:osher} on the noisy Lena image. The initial noise level is $15.71$.}
    \label{fig:comparison_n2_contours}
\end{figure}
The other group of experiments is on the same Lena portrait but with heavier noise, cf. \cref{fig:comparison_n2} and \cref{fig:comparison_n2_contours}, where the noise level is at $15.71$ and PSNR is $24.89$. The results shown in \cref{fig:comparison_n2} demonstrate that the proposed \Cref{alg:tvs2} is effective in preserving smooth surfaces like Lena face while the Osher-like iteration, i.e., \Cref{alg:osher}, is defective, showing a patch like surface. The PSNR values of the restored image are $30.23$ and $30.79$ corresponding to \Cref{alg:osher} and \Cref{alg:tvs2}, respectively. The according noise level are $8.50$ and $7.97$. For a better understanding the two restored images, we also plot the contours of these images, cf. \cref{fig:comparison_n2_contours}. The contours obtained from \Cref{alg:tvs2} are of higher parallelity compared to the ones obtained from \Cref{alg:osher}. This reflects also that \Cref{alg:tvs2} can generate smoother surfaces, e.g., Lena face, than \Cref{alg:osher}.

\begin{figure}[!htbp]
    \captionsetup[subfigure]{
                            justification=centering,
                            labelformat=empty,
                            }
    \centering
    \begin{minipage}{1.0\textwidth}
        \centering
        \subfloat[][Clean image]
            {
                \begin{tikzpicture}[blankboximg]
                    \node[anchor=south west] (img)
                    {   
                        \includegraphics[width=0.3\linewidth, 
                                        height=0.3\linewidth,
                                        angle=0,
                                        ]{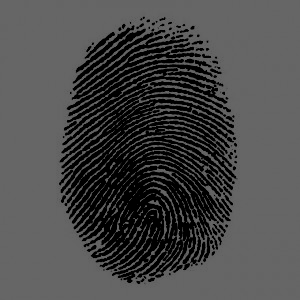}
                    };
                    {\transparent{0.0}
                    \draw (img.south west) rectangle (img.north east);
                    }
                \end{tikzpicture}
            }
            \hspace{0.0005\linewidth}
        \subfloat[][\Cref{alg:osher}]
            {
                \begin{tikzpicture}[blankboximg]
                    \node[anchor=south west] (iimg)
                    {   
                        \includegraphics[width=0.3\linewidth, 
                                        height=0.3\linewidth,
                                        angle=0,
                                        ]{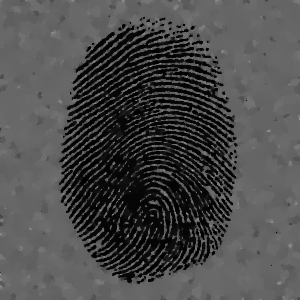}
                    };
                    {\transparent{0.0}
                    \draw (iimg.south west) rectangle (iimg.north east);
                    }
                    \begin{scope}[x=(iimg.south east),y=(iimg.north west)]
                        \node[redboximg,draw,minimum height=2.05cm,minimum width=2.05cm] (B1) at (0.27,0.485) {};
                    \end{scope}
                \end{tikzpicture}
            }
            \hspace{0.0005\linewidth}
        \subfloat[][\Cref{alg:tvs2}]
            {
                \begin{tikzpicture}[blankboximg]
                    \node[anchor=south west] (iiimg)
                    {   
                        \includegraphics[width=0.3\linewidth, 
                                        height=0.3\linewidth,
                                        angle=0,
                                        ]{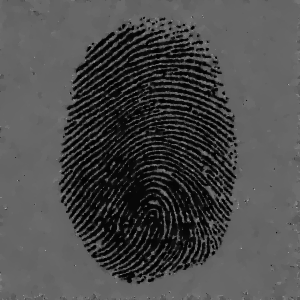}
                    };
                    {\transparent{0.0}
                    \draw (iiimg.south west) rectangle (iiimg.north east);
                    }
                    \begin{scope}[x=(iiimg.south east),y=(iiimg.north west)]
                        \node[blueboximg,draw,minimum height=2.05cm,minimum width=2.05cm] (B2) at (0.27,0.485) {};
                    \end{scope}
                \end{tikzpicture}
            }
    \end{minipage}
    \hfill
    \vspace{0.005\linewidth}
    \begin{minipage}{1.0\textwidth}
        \centering
        \subfloat[][Initial Gaussian noise]
            {
                \begin{tikzpicture}[blankboximg]
                    \node[anchor=south west] (img2)
                    {   
                        \includegraphics[width=0.3\linewidth, 
                                        height=0.3\linewidth,
                                        angle=0,
                                        ]{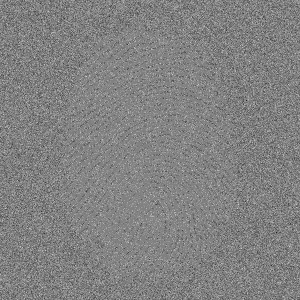}
                    };
                    {\transparent{0.0}
                    \draw (img2.south west) rectangle (img2.north east);
                    }
                \end{tikzpicture}
            }
            \hspace{0.0005\linewidth}
        \subfloat[][Noise: \Cref{alg:osher}]
            {
                \begin{tikzpicture}[blankboximg]
                    \node[anchor=south west] (iimg2)
                    {   
                        \includegraphics[width=0.3\linewidth, 
                                        height=0.3\linewidth,
                                        angle=0,
                                        ]{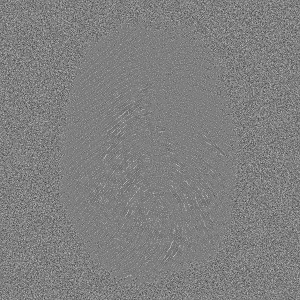}
                    };
                    {\transparent{0.0}
                    \draw (iimg2.south west) rectangle (iimg2.north east);
                    }
                \end{tikzpicture}
            }
            \hspace{0.0005\linewidth}
        \subfloat[][Noise: \Cref{alg:tvs2}]
            {
                \begin{tikzpicture}[blankboximg]
                    \node[anchor=south west] (iiimg2)
                    {   
                        \includegraphics[width=0.3\linewidth, 
                                        height=0.3\linewidth,
                                        angle=0,
                                        ]{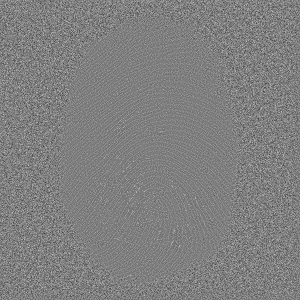}
                    };
                    {\transparent{0.0}
                    \draw (iiimg2.south west) rectangle (iiimg2.north east);
                    }
                \end{tikzpicture}
            }
    \end{minipage}
    \hfill
    \vspace{0.005\linewidth}
    \begin{minipage}{1.0\textwidth}
        \centering
        \subfloat[][Noisy image]
            {
                \begin{tikzpicture}[blankboximg]
                    \node[anchor=south west] (img1)
                    {   
                        \includegraphics[width=0.3\linewidth, 
                                        height=0.3\linewidth,
                                        angle=0,
                                        ]{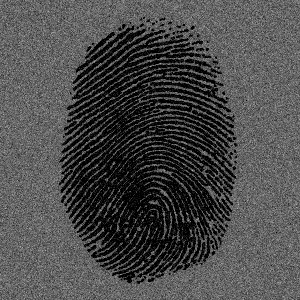}
                    };
                    {\transparent{0.0}
                    \draw (img1.south west) rectangle (img1.north east);
                    }
                \end{tikzpicture}
            }
            \hspace{0.0005\linewidth}
        \subfloat[][]
            {
                \begin{tikzpicture}[redboximg]
                    \node[anchor=south west] (iimg1)
                    {   
                        \includegraphics[width=0.3\linewidth, 
                                        height=0.3\linewidth,
                                        angle=0,
                                        clip=true,
                                        trim = 0mm 25mm 50mm 25mm 
                                        ]{fingerprint_noisy_1_m10}
                    };
                    \draw (iimg1.south west) rectangle (iimg1.north east);
                \end{tikzpicture}
            }
            \hspace{0.0005\linewidth}
        \subfloat[][]
            {
                \begin{tikzpicture}[blueboximg]
                    \node[anchor=south west] (iiimg1)
                    {   
                        \includegraphics[width=0.3\linewidth, 
                                        height=0.3\linewidth,
                                        angle=0,
                                        clip=true,
                                        trim = 0mm 25mm 50mm 25mm 
                                        ]{fingerprint_noisy_1_m2}
                    };
                    \draw (iiimg1.south west) rectangle (iiimg1.north east);
                \end{tikzpicture}
            }
    \end{minipage}
    \caption{Comparing the effect of iterative \Cref{alg:tvs2} and Osher-like \Cref{alg:osher} for ROF on the noisy fingerprint image. The initial noise level is $22.11$.}
    \label{fig:comparison_fingerprint}
\end{figure}
\begin{figure}[!htbp]
    \captionsetup[subfigure]{
                            justification=centering,
                            labelformat=empty,
                            }
    \centering
    \begin{minipage}{1.0\textwidth}
        \centering
        \subfloat[][Part 1: Fingerprint: \Cref{alg:osher}]
            {
                \begin{tikzpicture}[blankboximg]
                    \node[anchor=south west] (img)
                    {   
                        \includegraphics[width=0.3\linewidth, 
                                        height=0.3\linewidth,
                                        angle=0,
                                        ]{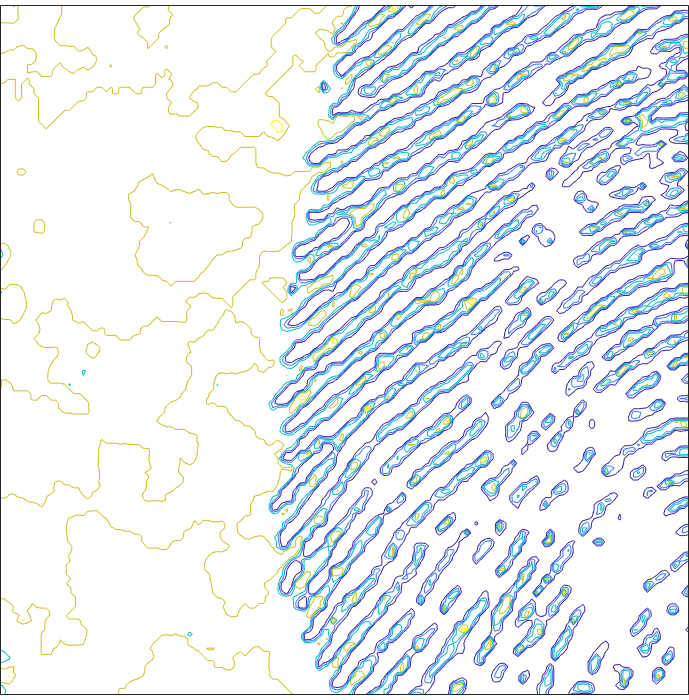}
                    };
                    {\transparent{0.0}
                    \draw (img.south west) rectangle (img.north east);
                    }
                    \begin{scope}[x=(img.south east),y=(img.north west)]
                        \node[redboximg,draw,minimum height=1.95cm,minimum width=1.95cm] (B1) at (0.745,0.7375) {};
                    \end{scope}
                \end{tikzpicture}
            }
            \hspace{0.0005\linewidth}
        \subfloat[][Part 1: Fingerprint: \Cref{alg:tvs2}]
            {
                \begin{tikzpicture}[blankboximg]
                    \node[anchor=south west] (iimg)
                    {   
                        \includegraphics[width=0.3\linewidth, 
                                        height=0.3\linewidth,
                                        angle=0,
                                        ]{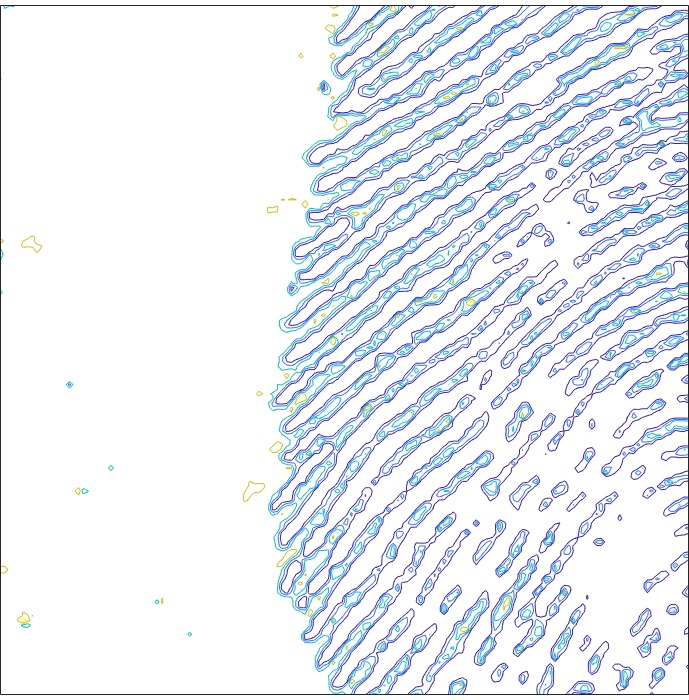}
                    };
                    {\transparent{0.0}
                    \draw (iimg.south west) rectangle (iimg.north east);
                    }
                    \begin{scope}[x=(iimg.south east),y=(iimg.north west)]
                        \node[blueboximg,draw,minimum height=1.95cm,minimum width=1.95cm] (B2) at (0.745,0.7375) {};
                    \end{scope}
                \end{tikzpicture}
            }
            \hspace{0.0005\linewidth}
        \subfloat[][Part 1: Fingerprint: Clean]
            {
                \begin{tikzpicture}[blankboximg]
                    \node[anchor=south west] (iiimg)
                    {   
                        \includegraphics[width=0.3\linewidth, 
                                        height=0.3\linewidth,
                                        angle=0,
                                        ]{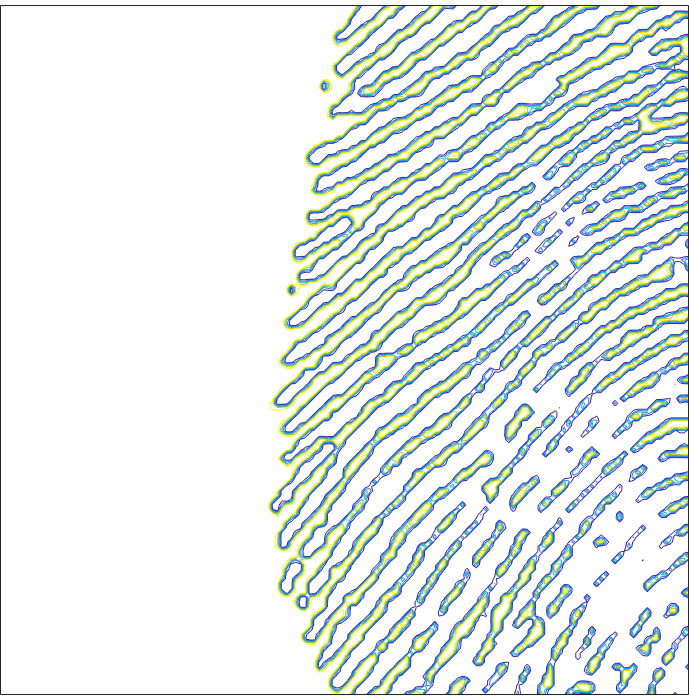}
                    };
                    {\transparent{0.0}
                    \draw (iiimg.south west) rectangle (iiimg.north east);
                    }
                    \begin{scope}[x=(iiimg.south east),y=(iiimg.north west)]
                        \node[selfDefinedColorboximg,draw,minimum height=1.95cm,minimum width=1.95cm] (B3) at (0.745,0.7375) {};
                    \end{scope}
                \end{tikzpicture}
            }
    \end{minipage}
    \hfill
    \vspace{0.005\linewidth}
    \begin{minipage}{1.0\textwidth}
        \centering
        \subfloat[][Part 2: Fingerprint: \Cref{alg:osher}]
            {
                \begin{tikzpicture}[redboximg]
                    \node[anchor=south west] (img1)
                    {   
                        \includegraphics[width=0.3\linewidth, 
                                        height=0.3\linewidth,
                                        angle=0,
                                        ]{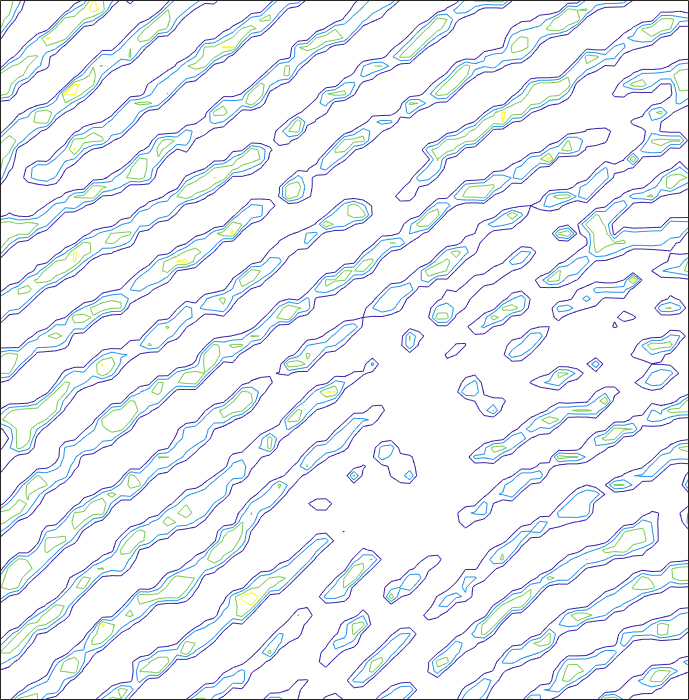}
                    };
                    \draw (img1.south west) rectangle (img1.north east);
                \end{tikzpicture}
                \label{subfig:}
            }
            \hspace{0.0005\linewidth}
        \subfloat[][Part 2: Fingerprint: \Cref{alg:tvs2}]
            {
                \begin{tikzpicture}[blueboximg]
                    \node[anchor=south west] (iimg1)
                    {   
                        \includegraphics[width=0.3\linewidth, 
                                        height=0.3\linewidth,
                                        angle=0,
                                        ]{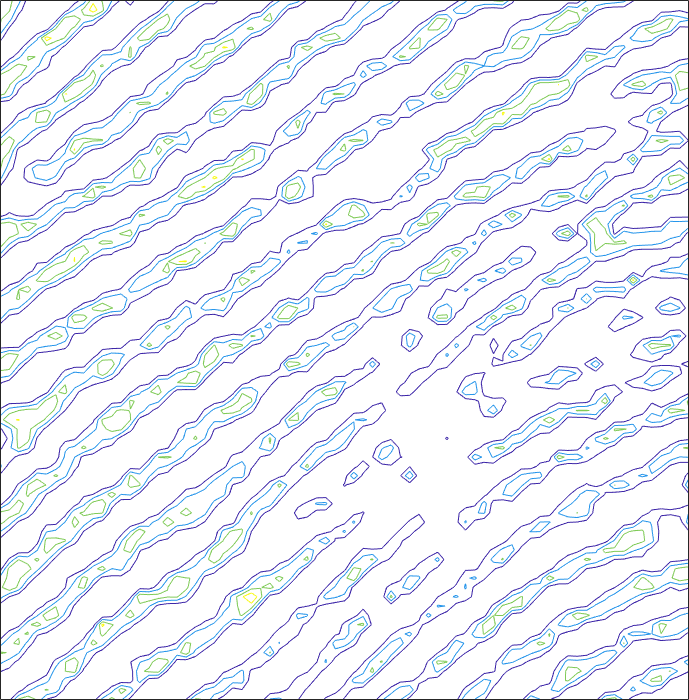}
                    };
                    \draw (iimg1.south west) rectangle (iimg1.north east);
                \end{tikzpicture}
            }
            \hspace{0.0005\linewidth}
        \subfloat[][Part 2: Fingerprint: Clean]
            {
                \begin{tikzpicture}[selfDefinedColorboximg]
                    \node[anchor=south west] (iiimg1)
                    {   
                        \includegraphics[width=0.3\linewidth, 
                                        height=0.3\linewidth,
                                        angle=0,
                                        ]{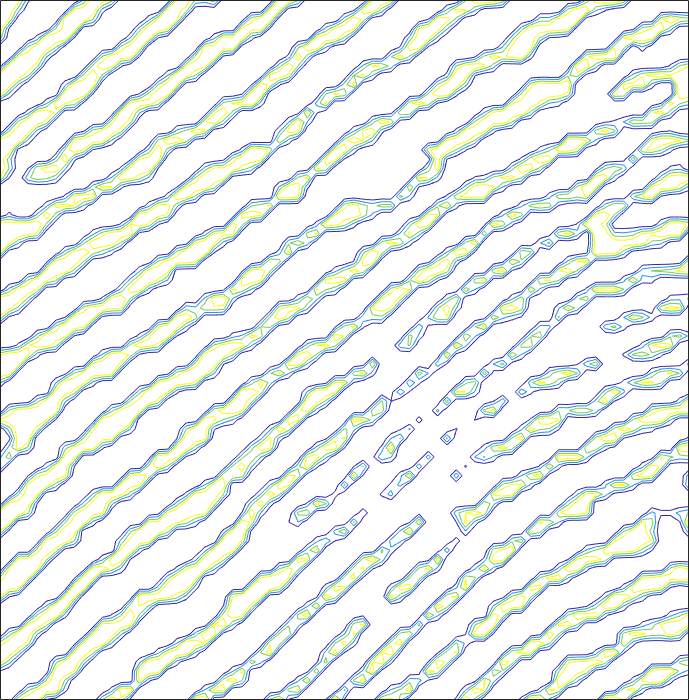}
                    };
                    \draw (iiimg1.south west) rectangle (iiimg1.north east);
                \end{tikzpicture}
            }
    \end{minipage}
    \caption{Contours show, comparing the effect of iterative \Cref{alg:tvs2} and Osher-like \Cref{alg:osher} for the ROF model on the noisy fingerprint image. The initial noise level is $22.11$.}
    \label{fig:comparison_contour_2}
\end{figure}
The next experiments are taken on a image of a fingerprint, cf. \cref{fig:comparison_fingerprint} and \cref{fig:comparison_contour_2}, with PSNR $22.65$ at noise level $22.11$. The results demonstrate the effectiveness of the proposed \Cref{alg:tvs2} in preserving sharp edges such as fingerprint textures and in restoring the smoothed structures like the surroundings in this image. The contours show a better connectivity of the texture for \cref{fig:comparison_contour_2} resulting longer structures, cf. \cref{fig:comparison_contour_2}. 

\begin{figure}[!htbp]
\captionsetup[subfigure]{
                        justification=centering,
                        labelformat=empty,
                        }
\centering
\begin{minipage}{1.0\textwidth}
    \centering
    \subfloat[][\Cref{alg:osher}]
        {
            \begin{tikzpicture}[blankboximg]
                \node[anchor=south west] (img)
                {   
                    \includegraphics[width=0.3\linewidth, 
                                    height=0.3\linewidth,
                                    angle=0,
                                    ]{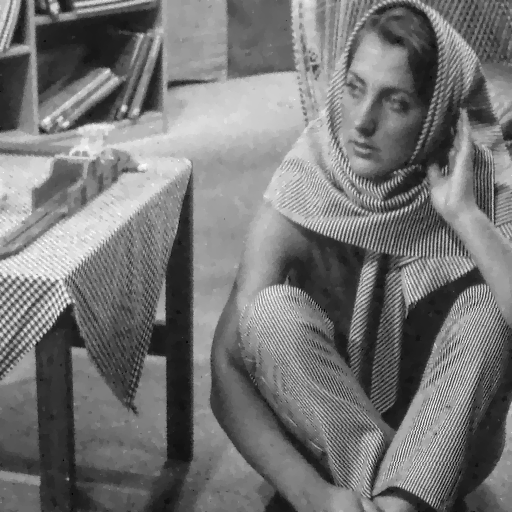}
                };
                \draw (img.south west) rectangle (img.north east);
                \begin{scope}[x=(img.south east),y=(img.north west)]
                    \node[redboximg,draw,minimum height=1.45cm,minimum width=1.45cm] (B1) at (0.78,0.815) {};
                    \node[blueboximg,draw,minimum height=1.55cm,minimum width=1.55cm] (B2) at (0.51,0.42) {};
                \end{scope}
            \end{tikzpicture}
        }
        \hspace{0.0005\linewidth}
    \subfloat[][]
        {
            \begin{tikzpicture}[redboximg]
                \node (img1)
                {   
                    \includegraphics[width=0.3\linewidth, 
                                    height=0.3\linewidth,
                                    angle=0,
                                    clip=true,
                                    trim = 110mm 115mm 5mm 0mm 
                                    ]{barbara_noisy_1_m10}
                };
                \draw (img1.south west) rectangle (img1.north east);
            \end{tikzpicture}
        }
        \hspace{0.0005\linewidth}
    \subfloat[][]
        {
            \begin{tikzpicture}[selfDefinedColorboximg]
                \node (iimg1)
                {   
                    \includegraphics[width=0.3\linewidth, 
                                    height=0.3\linewidth,
                                    angle=0,
                                    clip=true,
                                    trim = 110mm 115mm 5mm 0mm 
                                    ]{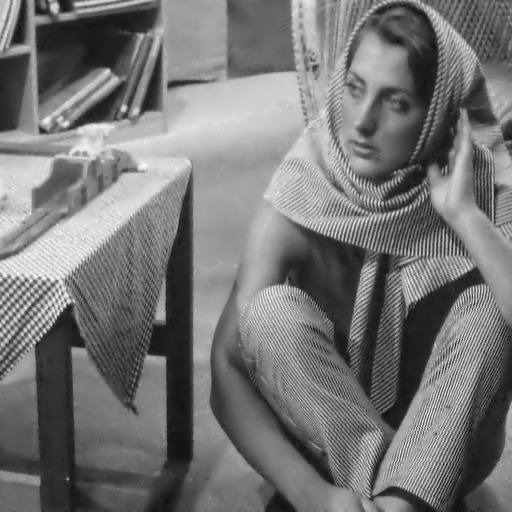}
                };
                \draw (iimg1.south west) rectangle (iimg1.north east);
            \end{tikzpicture}
        }
\end{minipage}
    \hfill
    \vspace{0.005\linewidth}
\begin{minipage}{1.0\textwidth}
    \centering
    \subfloat[][]
        {
            \begin{tikzpicture}[blueboximg]
                \node(img2)
                {   
                    \includegraphics[width=0.3\linewidth, 
                                    height=0.3\linewidth,
                                    angle=0,
                                    clip=true,
                                    trim = 57mm 43mm 54mm 68mm 
                                    ]{barbara_noisy_1_m10}
                };
                \draw (img2.south west) rectangle (img2.north east);
            \end{tikzpicture}
        }
        \hspace{0.0005\linewidth}
    \subfloat[][]
        {
            \begin{tikzpicture}[greenboximg]
                \node(iimg2)
                {   
                    \includegraphics[width=0.3\linewidth, 
                                    height=0.3\linewidth,
                                    angle=0,
                                    clip=true,
                                    trim = 57mm 43mm 54mm 68mm 
                                    ]{barbara_noisy_1_m2}
                };
                \draw (iimg2.south west) rectangle (iimg2.north east);
            \end{tikzpicture}
        }
        \hspace{0.0005\linewidth}
    \subfloat[][\Cref{alg:tvs2}]
        {
            \begin{tikzpicture}[blankboximg]
                \node[anchor=south west] (iimg)
                {   
                    \includegraphics[width=0.3\linewidth, 
                                    height=0.3\linewidth,
                                    angle=0,
                                    ]{barbara_noisy_1_m2}
                };
                \draw (iimg.south west) rectangle (iimg.north east);
                \begin{scope}[x=(iimg.south east),y=(iimg.north west)]
                    \node[selfDefinedColorboximg,draw,minimum height=1.45cm,minimum width=1.45cm] (B11) at (0.78,0.815) {};
                    \node[greenboximg,draw,minimum height=1.55cm,minimum width=1.55cm] (B12) at (0.51,0.42) {};
                \end{scope}
            \end{tikzpicture}
        }
\end{minipage}
\caption{Showing the effect of using \Cref{alg:tvs2} and \Cref{alg:osher} on Barbara image.}
\label{fig:com_barbara}
\end{figure}
\begin{figure}[!htbp]
    \captionsetup[subfigure]{
                            justification=centering,
                            labelformat=empty,
                            }
    \centering
    \begin{minipage}{1.0\textwidth}
        \centering
        \subfloat[][\Cref{alg:osher}]
            {
                \begin{tikzpicture}[blankboximg]
                    \node[anchor=south west] (img)
                    {   
                        \includegraphics[width=0.3\linewidth, 
                                        height=0.3\linewidth,
                                        angle=0,
                                        ]{barbara_noisy_1_m10}
                    };
                    \draw (img.south west) rectangle (img.north east);
                    \begin{scope}[x=(img.south east),y=(img.north west)]
                    \node[redboximg,draw,minimum height=1.08cm,minimum width=1.08cm] (B1) at (0.78,0.775) {};
                    \node[blueboximg,draw,minimum height=1.55cm,minimum width=1.55cm] (B2) at (0.51,0.42) {};
                    \end{scope}
                \end{tikzpicture}
            }
            \hspace{0.0005\linewidth}
        \subfloat[][]
            {
                \begin{tikzpicture}[redboximg]
                    \node (img1)
                    {   
                        \includegraphics[width=0.3\linewidth, 
                                        height=0.3\linewidth,
                                        angle=0,
                                        ]{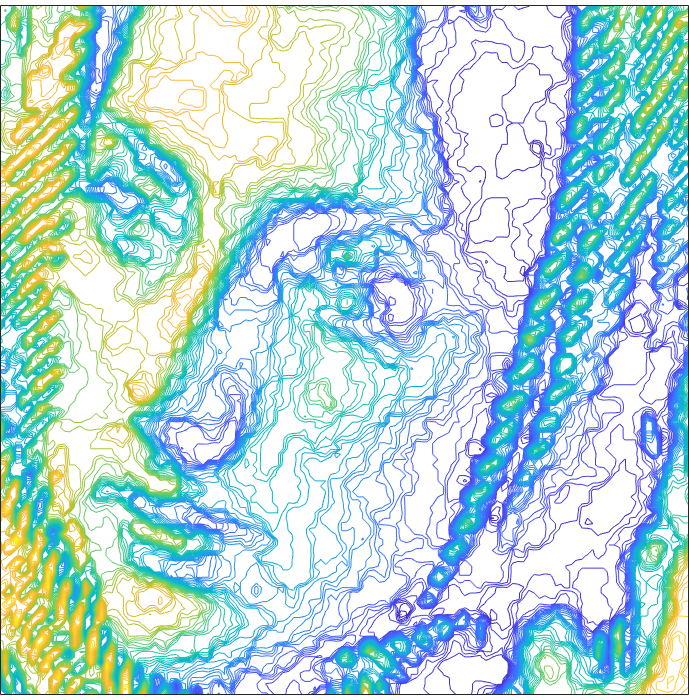}
                    };
                    \draw (img1.south west) rectangle (img1.north east);
                \end{tikzpicture}
            }
            \hspace{0.0005\linewidth}
        \subfloat[][]
            {
                \begin{tikzpicture}[selfDefinedColorboximg]
                    \node (iimg1)
                    {   
                        \includegraphics[width=0.3\linewidth, 
                                        height=0.3\linewidth,
                                        angle=0,
                                        ]{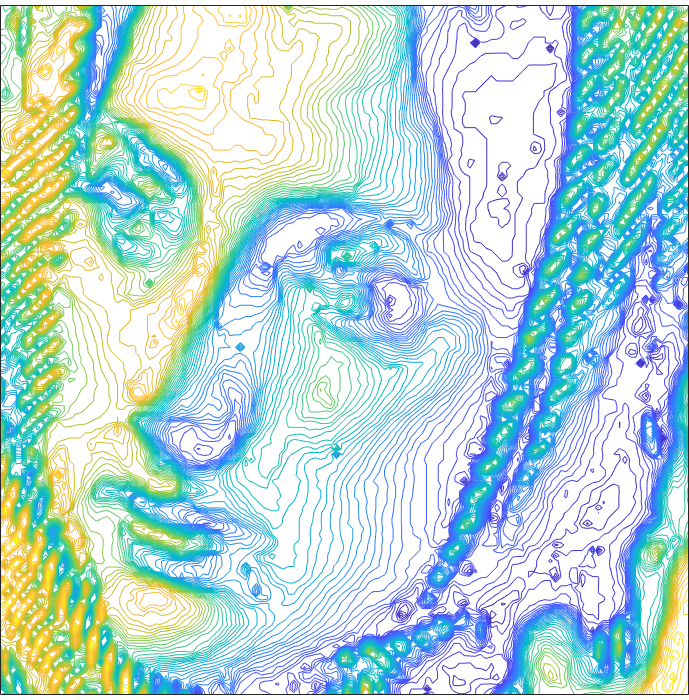}
                    };
                    \draw (iimg1.south west) rectangle (iimg1.north east);
                \end{tikzpicture}
            }
    \end{minipage}
    \hfill
    \vspace{0.005\linewidth}
    \begin{minipage}{1.0\textwidth}
        \centering
        \subfloat[][]
            {
                \begin{tikzpicture}[blueboximg]
                    \node(img2)
                    {   
                        \includegraphics[width=0.3\linewidth, 
                                        height=0.3\linewidth,
                                        angle=0,
                                        ]{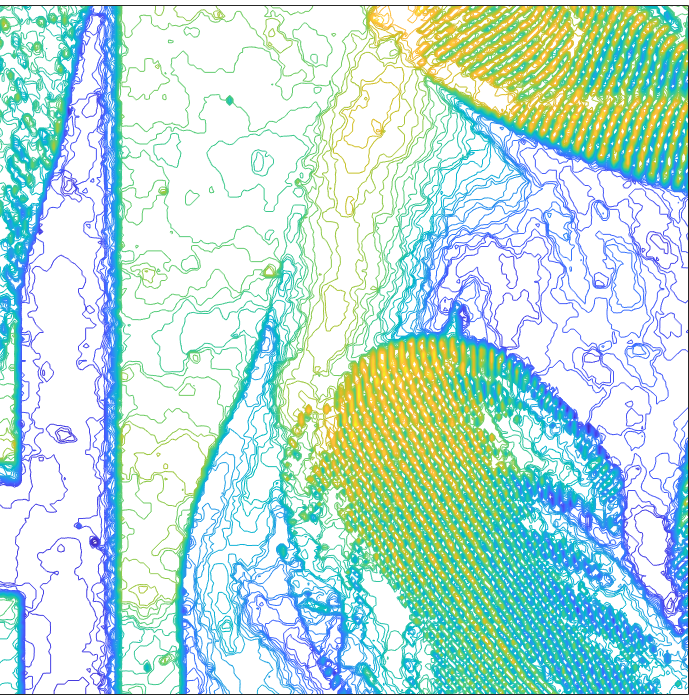}
                    };
                    \draw (img2.south west) rectangle (img2.north east);
                \end{tikzpicture}
            }
            \hspace{0.0005\linewidth}
        \subfloat[][]
            {
                \begin{tikzpicture}[greenboximg]
                    \node(iimg2)
                    {   
                        \includegraphics[width=0.3\linewidth, 
                                        height=0.3\linewidth,
                                        angle=0,
                                        ]{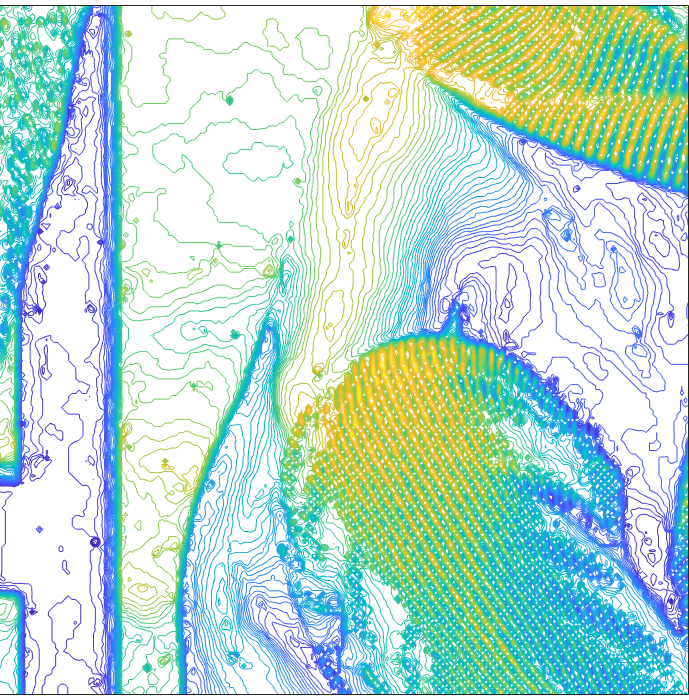}
                    };
                    \draw (iimg2.south west) rectangle (iimg2.north east);
                \end{tikzpicture}
            }
            \hspace{0.0005\linewidth}
        \subfloat[][\Cref{alg:tvs2}]
            {
                \begin{tikzpicture}[blankboximg]
                    \node[anchor=south west] (iimg)
                    {   
                        \includegraphics[width=0.3\linewidth, 
                                        height=0.3\linewidth,
                                        angle=0,
                                        ]{barbara_noisy_1_m2}
                    };
                    \draw (iimg.south west) rectangle (iimg.north east);
                    \begin{scope}[x=(iimg.south east),y=(iimg.north west)]
                    \node[selfDefinedColorboximg,draw,minimum height=1.08cm,minimum width=1.08cm] (B11) at (0.78,0.775) {};
                    \node[greenboximg,draw,minimum height=1.55cm,minimum width=1.55cm] (B12) at (0.51,0.42) {};
                    \end{scope}
                \end{tikzpicture}
            }
    \end{minipage}
    \caption{The contours showing the effect of using \Cref{alg:tvs2} and \Cref{alg:osher} on Barbara image.}
    \label{fig:com_barbara_contour}
\end{figure}
The last experiments are applied on the Barbara image, cf. \cref{fig:com_barbara} and \cref{fig:com_barbara_contour}. The noise level of the initial image is $20.06$ and initial PSNR is $28.14$. The results from the proposed \Cref{alg:tvs2} show much smoother face and arm compared to the results from \Cref{alg:osher}. The results of \Cref{alg:tvs2} also show a better restoration in preserving pinstripes structures. The final restored images are at noise level $14.15$ and $12.91$ for \Cref{alg:osher} and \Cref{alg:tvs2}, respectively. The according PSNR are $31.17$ and $31.97$.

\section{Conclusions}
\label{sec:conclusions}
In this paper, we have proposed the Richardson-like iterative algorithms applied to the first step, the second step, and both steps of the TV-Stokes model, respectively. We have proven the well-definedness and the convergence of each algorithm. The numerical experiments show a visible improvement compared to the Osher-like iteration on the ROF model in both edge preserving and surface smoothing.

\bibliographystyle{siamplain}
\bibliography{refs}

\end{document}